\documentclass[11pt,a4paper]{article}
\usepackage[utf8]{inputenc}
\usepackage{microtype}
\usepackage{amsmath,amsthm,amssymb}
\usepackage{mathrsfs} 
\usepackage[T1]{fontenc}
\usepackage{slashed}
\usepackage{times}
\usepackage[british]{babel}
\usepackage{aliascnt}
\usepackage[colorlinks,pagebackref,hypertexnames=false]{hyperref} 
\usepackage{url}
\usepackage[numeric,backrefs]{amsrefs}
\usepackage{nth}
\usepackage{float}
\usepackage[all]{xy}
\usepackage{tikz-cd} 
\usepackage{bookmark}
\usepackage[margin=2cm]{geometry}
\usepackage{enumerate}
\usepackage{soul}

\newcommand{\rG}{{\rm G}}

\newcommand{\R}{\mathbb{R}}
\newcommand{\C}{\mathbb{C}}
\renewcommand{\H}{\bH}

\newcommand{\su}{\mathfrak{su}}
\renewcommand{\so}{\mathfrak{so}}

\newcommand{\SO}{{\rm SO}}

\newcommand{\SU}{{\rm SU}}

\newcommand{\SL}{\mathrm{SL}}
\newcommand{\U}{{\rm U}}

\renewcommand{\epsilon}{\varepsilon}

\newcommand{\diag}{\mathrm{diag}}

\newcommand{\vol}{\mathrm{vol}}

\newcommand{\qandq}{\quad\text{and}\quad}

\def\<{\mathopen{}\left<}
\def\>{\right>\mathclose{}}
\def\({\mathopen{}\left(}
\def\){\right)\mathclose{}}

\usepackage{multicol, color}

\definecolor{gold}{rgb}{0.85,.66,0}
\definecolor{cherry}{rgb}{0.9,.1,.2}
\definecolor{burgundy}{rgb}{0.8,.2,.2}
\definecolor{orangered}{rgb}{0.85,.3,0}
\definecolor{orange}{rgb}{0.85,.4,0}
\definecolor{olive}{rgb}{.45,.4,0}
\definecolor{lime}{rgb}{.6,.9,0}
\definecolor{green}{rgb}{.2,.7,0}
\definecolor{grey}{rgb}{.4,.4,.2}
\definecolor{brown}{rgb}{.4,.3,.1}

\def\rmG{\mathrm{G}}
\def\al{\alpha}

\def\d{\text{d}}
\def\w{\wedge}
\def\R{\mathbb{R}}
\def\C{\mathbb{C}}

\def\Lm{\Lambda}
\def\om{\omega}
\def\Om{\Omega}
\def\vp{\varphi}
\def\ip{\raise1pt\hbox{\large$\lrcorner\ \!$}} 

\def\G{\mathrm{G}}
\newcommand{\nc}{\newcommand}
\nc{\Iso}{\operatorname{Iso}}
 \nc{\iso}{\mathfrak{iso}}
 \nc{\sso}{\mathfrak{so}}
\nc{\Ad}{\operatorname{Ad}} 
\nc{\Sym}{\mathrm{Sym}}
\nc{\Hol}{\mathrm{Hol}}

  \nc{\pr}{\operatorname{pr}} 
 \nc{\Dera}{\operatorname{Dera}} \nc{\Auto}{\operatorname{Auto}}
 \nc{\LL}{{\rm L}}
\nc{\dd}{{\rm d}}
\nc{\Id}{{\rm Id}}

\renewcommand{\d}{\mathrm{d}}

\newcommand{\mg}{\mathfrak g }

\newcommand{\mn}{\mathfrak n }

\newcommand{\mh}{\mathfrak h }

\renewcommand{\sl}{\mathfrak{sl} }

\newcommand{\lela}{ \left\langle}
\newcommand{\rira}{\right\rangle}
\newcommand{\bil}{\lela\cdot,\cdot\rira}

 \newtheorem{teo}{Theorem}[section]
 \newtheorem{pro}[teo]{Proposition}
 \newtheorem{cor}[teo]{Corollary}
 
 \newtheorem{defi}[teo]{Definition}

 \theoremstyle{definition}
 \newtheorem{ex}[teo]{Example}
  
 \newtheorem{remark}[teo]{Remark}

\usepackage[textwidth=1.7cm,textsize=scriptsize]{todonotes}
\usepackage{comment}

\allowdisplaybreaks
\date{}

\renewcommand{\d}{\mathrm{d}}

\renewcommand{\sl}{\mathfrak{sl} }

  \renewcommand{\H}{\mathbb H}

\newcommand{\tbfB}{\textbf{B}}

\newcommand{\tbfC}{\textbf{C}}

\def\w{\wedge}
\def\R{\mathbb{R}}
\def\C{\mathbb{C}}
\def\CP{\mathbb{C}\mathbb{P}}

\def\Lm{\Lambda}
\def\om{\omega}
\def\Om{\Omega}

\def\op{\oplus}

\def\U{\mathrm{U}}
\def\SO{\mathrm{SO}}
\def\SU{\mathrm{SU}}

\def\G{\mathrm{G}}

\def\ip{\raise1pt\hbox{\large$\lrcorner$}\>}

\numberwithin{equation}{section}
\newcommand \blue{\color{blue}}

\newcommand\gray{\color{lightgray}}
\newcommand\magenta{\color{magenta}}
\usepackage{titling}
\usepackage{changepage}

\usepackage{authblk}

\begin{document}

\title{Isometric solutions to the heterotic $\mathrm{G}_2$-system}

\author{Viviana del Barco, \quad Udhav Fowdar, \quad Andr\'es J. Moreno}

\maketitle

\vspace{-1.0cm}	

\begin{abstract}
In this note, we construct new solutions to the heterotic $\mathrm{G}_2$-system with non-abelian gauge group, both compact and non-compact, on certain $2$-step nilmanifolds and $3$-Sasakian manifolds. Our approach is based on an ansatz that allows us to vary both the $\mathrm{G}_2$-structure and the gauge data while keeping the underlying metric and orientation fixed. This leads, in particular, to distinct isometric solutions on the same manifold but with different gauge groups, and in some cases the resulting connection coincides with the characteristic connection of the $\mathrm{G}_2$-structure. We also investigate an $S^1$-invariant construction that yields further isometric solutions and with varying cosmological constant. Our results recover and extend several known examples solving the heterotic $\mathrm{G}_2$-system within a unified framework.
\end{abstract}

\section{Introduction}

Originating in theoretical physics, the heterotic $\mathrm{G}_2$-system (or $\rmG_2$-Hull-Strominger system) arises in the study of compactifications and domain wall solutions of heterotic string theory on $7$-dimensional manifolds \cites{ Fernandez2011, delaOssa2018, Clarke2022}. This can be viewed as the $\rmG_2$-analogue of the Hull-Strominger system on Calabi-Yau manifolds \cites{Str86,hull1986}.
In recent years, there has been a growing interest, in both mathematics and theoretical physics, in finding solutions to this system and understanding their moduli spaces cf. \cites{ fino2026,magdalena2020,dlOG21,OssaLaSv15,garcia2025parabolic,Iva10,Lotay2022,Lotay2024}. The main goal of this paper is to construct new solutions to the heterotic $\mathrm{G}_2$-system on certain $2$-step nilmanifolds and $3$-Sasakian manifolds by simultaneously varying the underlying $\mathrm{G}_2$-structure and the gauge group $G$. Before stating our results more precisely, we first recall the basic setup.

Let $(M,\varphi)$ denote a $7$-manifold endowed with a $\mathrm{G}_2$-structure determined by the $3$-form $\varphi$. Consider a principal $G$-bundle $P\to M$ with a connection $1$-form $A$ and let $\langle \cdot ,\cdot \rangle_{\mathfrak{g}}$ be an $\Ad(G)$-invariant non-degenerate symmetric bilinear form on its Lie algebra $\mathfrak{g}$ (shortly, $\mathrm{ad}$-invariant). Following \cite{Lotay2024}*{Definition 3.4}, we say that $(\varphi,A)$ satisfies the \emph{heterotic $\G_2$-system} if the following system of equations holds:
\begin{align}
\d\star \varphi &=4\tau_1\wedge \star \varphi,\label{equ: heterotic system 1}\\
F_{A}\wedge \star \varphi &=0,\label{equ: heterotic system 2}\\
\d T_{\varphi} &=\langle F_{A}\wedge F_{A}\rangle_{\mathfrak{g}} ,\label{equ: heterotic system 3}
\end{align}
where $\star$ denotes the Hodge star operator defined by $\varphi$, $\tau_1:=\frac{1}{12}\star(\varphi \w \star \d\varphi)$ is the intrinsic torsion $1$-form and $T_{\varphi}$ is the $3$-form defined by \eqref{equ: definition of T}. More geometrically, $T_{\varphi}$ can be defined as the torsion of the \textit{characteristic connection}: the unique $\mathrm{G}_2$ connection with skew-symmetric torsion, whose existence is equivalent to \eqref{equ: heterotic system 1} \cite{Friedrich2001}. Equation \eqref{equ: heterotic system 2} is the condition that $A$ is a $\mathrm{G}_2$-instanton. Equation \eqref{equ: heterotic system 3} is called the `heterotic Bianchi identity', and it couples the latter geometric and gauge-theoretic data into an intricate nonlinear constraint. 

To the best of our knowledge, the first non-trivial solutions to \eqref{equ: heterotic system 1}-\eqref{equ: heterotic system 3} were obtained in \cite{Fernandez2011} on certain Heisenberg nilmanifolds. Here by `non-trivial', we mean that the underlying $\G_2$-structure is not torsion free. More recently, new examples have been constructed on torus fibrations over K3 orbifolds in \cite{fino2026}, and in \cite{MRV} solutions on $2$-step nilmanifolds with abelian gauge groups were  classified. Approximate solutions have also been obtained in \cite{Lotay2022} on contact Calabi-Yau manifolds and in \cite{GS24} on certain $3$-$(\alpha,\delta)$-Sasakian manifolds with an exact solution occurring in the degenerate case. 

In the present work, we investigate two particular ans\"atze. The first ansatz applies whenever $M$ admits an $\SO(4)$-structure. Fixing a metric and orientation on $M$, there is a natural $\SO(3)$-family of compatible $\G_2$-structures. We vary the $\mathrm{G}_2$ form $\varphi$ in the latter $\SO(3)$-family and consider a connection $1$-form $A$ with $3$-dimensional gauge group $G$. Conditions \eqref{equ: heterotic system 1} and \eqref{equ: heterotic system 2} then lead to a coupling of the $\SO(3)$-variation and the gauge group $G$. When the underlying manifold $M$ is a $2$-step nilmanifold (belonging to a certain family) and $G=\mathbb{T}^3$ or $\SU(2)$, the heterotic Bianchi identity \eqref{equ: heterotic system 3} can be solved by choosing a suitable $\mathrm{ad}$-invariant pairing on $\mathfrak{g}$. On the other hand, for $G=\mathrm{SL}(2,\R)$, we need to supplement an additional $\mathrm{U}(1)$-connection in order to solve \eqref{equ: heterotic system 3}. Our precise results are contained in Proposition \ref{pro:del0}, \ref{pro:delneq0} and \ref{pro:delneq0A4}, and Corollary \ref{cor:del0}, \ref{cor:delneq0} and \ref{cor:delneq0A4}. We can roughly summarise them into the following:
\begin{teo}\label{teo:nilexistence}
	 Let $M=\Gamma\backslash N$ be a nilmanifold with nilpotent Lie algebra $\mathfrak{n}=\mathrm{Lie}(N)$ and $\Gamma$ be a co-compact lattice. There exists a co-closed  $\G_2$-structure $\varphi$ on $M$ and $G$-connection $A$ solving the heterotic $\G_2$-system \eqref{equ: heterotic system 1}-\eqref{equ: heterotic system 3} in the following cases:
	\begin{enumerate}
		\item $G=\mathbb{T}^3$ and $\mathfrak{n}\cong \R^2\oplus\mh_5$, $\R \oplus \mh_3^\C$, $\mh_{\H}$,
		\item $G=\mathrm{SU}(2)$ and $\mathfrak{n}\cong \R\oplus\mn_{3,2}$,
		$\mn_{7,3,A}$, 
		$\mn_{7,3,B_1}$,
		$\mn_{7,3,C}$,
		$\mh_\H$,
		\item $G=\mathrm{SL}(2,\R)\times \U(1)$ and $\mathfrak{n}\cong \R\oplus\mn_{3,2}$,
		$\mn_{7,3,A}$, 
		$\mn_{7,3,B_1}$,
		$\mn_{7,3,C}$,
		$\mh_\H$,
	\end{enumerate}
	where the structure constants for each $\mathfrak{n}$ is given in Remark \ref{rem:isoclass} below. 
	Moreover, in the non-abelian cases, both the underlying metric and volume form are the same but $\varphi$ are distinct. 
 \end{teo}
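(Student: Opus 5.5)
The proof works on the Lie algebra level. All forms will be left-invariant on $N$, hence descend to $M=\Gamma\backslash N$. The bundle will be trivial, $P=M\times G$, with connection $A=\sum_i a_i\, e^i\otimes X_i$, where $e^i$ are left-invariant $1$-forms and $X_i$ is a basis of $\mathfrak g$.

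\textbf{Ansatz.} For each $\mathfrak n$ in the list we fix an orthonormal coframe $e^1,\dots,e^7$ adapted to an $\SO(4)$-splitting $\R^7=\R^3\oplus\R^4$. The centre (or derived algebra) of dimension $\le 3$ sits inside the $\R^3$ factor, and $\d e^i$ for $i=1,2,3$ are combinations of anti-self-dual $2$-forms $\omega_j^-$ on $\R^4$, plus possibly $2$-forms on $\R^3$. We then take the standard family
\[
\varphi = e^{123} - \sum_{i} e^i\wedge \omega_i ,
\]
where $\omega_i=\sum_j g_{ij}\omega_j$ and $g\in\SO(3)$ rotates the self-dual triple. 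This choice keeps the metric and orientation fixed.

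\textbf{Conditions \eqref{equ: heterotic system 1} and \eqref{equ: heterotic system 2}.} Co-closedness, $\d\star\varphi=0$, reduces to linear conditions on the structure constants. These hold for anti-self-dual differentials, since $\omega^+\wedge\omega^-=0$; this gives $\tau_1=0$. For the instanton equation, $F_A\wedge\star\varphi=0$ means $F_A\in\Lambda^2_{14}$. The curvature is
\[
F_A=\d A+\tfrac12[A\wedge A].
\]
If we take $A$ supported on $e^1,e^2,e^3$, then $\d A$ is anti-self-dual on $\R^4$, which lies in $\Lambda^2_{14}$. The quadratic term lives on $\Lambda^2\R^3$, and only its $\Lambda^2_7$-component must vanish. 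We handle this by letting the structure constants of $\mathfrak g$ (abelian, $\su(2)$, or $\sl(2,\R)$) be coupled to the rotation $g$. For example, we replace $\omega_i^-$ by a rotated basis that is compatible with the bracket, or we choose $a_i$ to cancel the $e^{jk}$ part against $\d e^i$ terms on $\R^3$. We then carry this out case by case, using the structure constants of Remark \ref{rem:isoclass}.

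\textbf{Bianchi identity \eqref{equ: heterotic system 3}.} For co-closed $\varphi$ we have $T_\varphi=-\star\d\varphi$ (up to the paper's normalisation). We compute $\d T_\varphi$, which is a left-invariant $4$-form built from products $\omega_j^-\wedge\omega_k^-$, i.e. multiples of the volume form on $\R^4$. We compare it with $\langle F_A\wedge F_A\rangle_{\mathfrak g}$, which is likewise a combination of $\omega^-_j\wedge\omega^-_k$ terms and is controlled by the choice of the ad-invariant form:
\begin{itemize}
\item for $\mathbb T^3$, an arbitrary symmetric matrix;
\item for $\su(2)$, a multiple of the Killing form;
\item for $\sl(2,\R)$, a multiple of the Killing form, whose indefinite sign may need correcting.
\end{itemize}
Matching coefficients gives algebraic equations in the structure constants and the pairing. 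In the $\sl(2,\R)$ case the sign constraint fails, so we add a $\U(1)$-connection whose curvature is an anti-self-dual form $\beta$, with pairing chosen to absorb the residual $\beta\wedge\beta$ mismatch. Finally, distinctness of $\varphi$ in the non-abelian cases follows because different gauge groups force different rotations $g\in\SO(3)$ with the same metric.

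\textbf{Main obstacle.} The key difficulty is the simultaneous solvability of the instanton condition and the Bianchi identity with a nondegenerate pairing of the correct sign, case by case. I expect this to require explicit verification for each nilpotent algebra. I would first try the $\h_\H$ case, where the quaternionic structure makes the $\SO(3)$ coupling with $\su(2)$ most transparent.
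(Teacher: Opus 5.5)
\textbf{There is a genuine gap: your ansatz cannot produce cases 2 and 3.} Your treatment of case 1 is essentially the paper's $\delta=0$ argument; the trouble is the non-abelian cases.

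If every $\d e^i$ is anti-self-dual on $\R^4$ (the chirality opposite to the forms in $\varphi$), then indeed $\d A\in\Lambda^2_{14}$. But for every rotation $g$, each $e^{jk}$ on the $\R^3$ factor satisfies $e^{jk}\wedge\psi=\pm e^{jk}\wedge\vol_{\R^4}\neq0$, because all other terms of $\psi$ already contain a $2$-form on $\R^3$. Hence
$F_A\wedge\psi=\pm\tfrac12\sum_{j,k}a_ja_k[X_j,X_k]\,e^{jk}\wedge\vol_{\R^4}$,
and $A$ is an instanton only if $a_ja_k[X_j,X_k]=0$. So $A$ is effectively abelian; this is exactly the ``only if'' in Proposition \ref{pro:del0}.

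Neither of your escape routes works. Rotating the anti-self-dual basis keeps $\d A$ inside $\Lambda^2_{14}$, so nothing cancels the bracket term. Cancelling against $\Lambda^2\R^3$-components of $\d e^i$ is impossible: in every algebra of cases 2 and 3 the derived algebra is $3$-dimensional and central, so once it is the $\R^3$ factor one has $\d e^i\in\Lambda^2\R^4$.

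Worse, $\R\oplus\mn_{3,2}$, $\mn_{7,3,A}$, $\mn_{7,3,B_1}$ and $\mn_{7,3,C}$ do not fit your form at all. Their three non-zero differentials would have to span $\Lambda^2_-\R^4$, on which the wedge pairing is definite for whatever metric you choose. In Remark \ref{rem:isoclass}, however, their span contains non-zero decomposable forms such as $e^{12}$, $e^{14}$, $e^{23}$; for $\R\oplus\mn_{3,2}$ the span is totally null. For $\mh_\H$ your form is available, but only in the chirality that yields abelian instantons.

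\textbf{The missing idea} is that the differentials must carry a self-dual part, of the same chirality as $\varphi$: $\d e^{4+i}=\delta\,\sigma_i^++\epsilon_i\,\sigma_i^-$ with $\delta\neq0$. The $\Lambda^2_7$-component of $\delta\sigma_i^+$ is what cancels that of $\tfrac12[A\wedge A]$. The instanton condition then becomes $\textbf{C}=-2\textbf{A}^+\textbf{B}$ (Proposition \ref{pro:dAA}). This is the precise coupling between the rotation and the structure constants of $\mathfrak g$ that your proposal leaves vague. With $\textbf{B}=\diag(1,a,a)$, $a=\pm1$:
\begin{itemize}
\item $a=1$ forces $\textbf{C}=-2\delta\,\mathrm{Id}$, i.e.\ $\su(2)$;
\item $a=-1$ forces $\textbf{C}=2\delta\,\diag(-1,1,1)$, i.e.\ $\sl(2,\R)$.
\end{itemize}

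Your later steps then change too:
\begin{itemize}
\item Co-closedness no longer follows from $\omega^+\wedge\omega^-=0$. In the paper, $\d\psi$ reduces to $-\mathrm{tr}(\kappa_1\textbf{C})\,e^{1234}$, where $\kappa_1$ is a skew matrix of $1$-forms in $e^5,e^6,e^7$ and $\textbf{C}$ is symmetric. So $\d\psi$ vanishes precisely because $A$ is an instanton.
\item Now $\tau_0=\tfrac47\delta(2a+1)\neq0$, so $T_\varphi=\tfrac16\tau_0\varphi-\tau_3$ differs from $-\star\d\varphi$ by $\tfrac76\tau_0\varphi$. This is an extra term, not a normalisation.
\item $\d T_\varphi$ is not a multiple of $\vol_{\R^4}$; it has components $(\delta\sigma_i^++\epsilon_i\sigma_i^-)\wedge e^{jk}$. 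Matching these against $\langle F_A\wedge F_A\rangle$ is what forces the pairing to be $\alpha\,\diag(a,1,1)$ with $\alpha=(2a+1)/(3a)$.
\end{itemize}
That choice leaves no residual for $a=1$. For $a=-1$ it leaves the residual $-\tfrac43(4\delta^2+2\epsilon_1^2+\epsilon_2^2+\epsilon_3^2)e^{1234}$, which is then absorbed by a $\U(1)$-connection with curvature $\sigma_1^-$, as you anticipated.
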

  
It turns out that for $\mathfrak{n}\cong\R \oplus \mathfrak{n}_{3,2}$ and $\mathfrak{h}_\H$, the above $\SU(2)$-connection $A$ can be identified with the characteristic connection of the underlying $\G_2$-structure, see Remark \ref{rem: relation to characteristic connection}. 
It was shown in \cite{CdBM} that any $2$-step nilpotent Lie algebra with a co-closed $\G_2$-structure whose characteristic connection is a $\G_2$-instanton is necessarily one of $\mathfrak{n}\cong \R \oplus \mathfrak{n}_{3,2}$, $\R^2 \oplus \mathfrak{h}_5$, $\mathfrak{h}_7$ and $\mathfrak{h}_\H$. It is rather striking that in all these cases, they also solve the heterotic Bianchi identity \eqref{equ: heterotic system 3}. The first such examples were found in \cite{Fernandez2011} (whereby the holonomy algebra is in fact abelian, see Example \ref{ex: more characteristic connection} below) and the remaining cases follow from our results here (in which case the holonomy algebra is instead $\mathfrak{su}(2)$). For the quaternion Heisenberg Lie algebra $\mathfrak{h}_{\H}$, this $\SU(2)$ example coincides with the exact solution found in \cite{GS24}. Remarkably, $\mathfrak{h}_\H$ admits distinct solutions to the heterotic $\G_2$-system with gauge group $\mathbb{T}^3$, $\SU(2)$ and $\SL(2,\R)\times \U(1)$. To the best of our knowledge, our $\SL(2,\R)\times \U(1)$ examples are the first known solutions to \eqref{equ: heterotic system 1}-\eqref{equ: heterotic system 3} with a non-compact, non-abelian gauge group.

Secondly, we investigate our $\SO(3)$ ansatz in the case when $M$ is a $3$-Sasakian manifold. Recall that a $3$-Sasakian manifold admits two distinct nearly parallel $\G_2$-structures: one induced from its standard $3$-Sasakian structure and one obtained by squashing the metric in the direction of the Reeb foliation \cites{FriedrichNP, Galicki1996}. In this case, to obtain new solutions of the heterotic $\mathrm{G}_2$-system, we must also introduce additional connections alongside our ansatz $G$-connection. To this end, we specialise to the $7$-sphere $S^7$ and the Aloff-Wallach space $N^{1,1}$. The additional connections are then obtained via pullback from the base quaternion-K\"ahler manifolds $S^4$ and $\CP^2$; these connections have gauge groups $\SU(2)$ and $\U(1)$, respectively, and are induced by the anti-self-dual part of the Levi-Civita connection \cite{AtiyahASD}. We denote by $\varphi_{ts}$ the $3$-Sasakian nearly parallel $\G_2$-structure, by $\varphi_{np}$ the squashed nearly parallel $\G_2$-structure, and by $\widehat{\varphi}_{ts}$ a co-closed (but not nearly parallel) $\G_2$-structure related to $\varphi_{ts}$ by our $\SO(3)$ ansatz, see \eqref{eq: 3sasa g2 structure modified} the definition. In the above notation, our result can be summarised as follows:
\begin{teo}\label{teo:3sasaki-unified}
Let $M$ be either $S^7$ or $N^{1,1}$. For the $\G_2$-structure $\varphi_{ts}$, there exist $G$-connections solving the heterotic $\G_2$-system in the following cases:
        \begin{enumerate}
            \item $G=\SL(2,\R)\times \SU(2)\times \G_2$ and $M=S^7$,
\item $G=\SL(2,\R)\times \U(1)$ and $M=N^{1,1}$.
            \end{enumerate} For the $\G_2$-structures $\widehat{\varphi}_{ts}$ and $\varphi_{np}$, there exist $G$-connections solving the heterotic $\G_2$-system in the following cases:
            \begin{enumerate}
            \item $G=\SU(2)\times \SU(2)$ and $M=S^7$,
            \item $G=\SU(2)\times \U(1)$ and $M=N^{1,1}$.
        \end{enumerate}   
\end{teo}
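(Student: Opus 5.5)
The plan is to work with the standard $3$-Sasakian coframe. On $M$ (either $S^7$ or $N^{1,1}$) take the Reeb $1$-forms $\eta_1,\eta_2,\eta_3$ with
\[
\d\eta_i=-2\omega_i+\epsilon_{ijk}\eta_j\w\eta_k ,
\]
up to the normalisation convention, where the $\omega_i$ are horizontal $2$-forms. They satisfy $\d\omega_i=\epsilon_{ijk}(\ldots)$, obtained by differentiating the previous identity. Then
\[
\varphi_{ts}=\eta_{123}+\sum_i\eta_i\w\omega_i ,
\]
and, for the squashing parameter $t$, $\varphi_{np}$ is obtained by rescaling $\eta_i\mapsto t\eta_i$ with $t^2=1/5$ (together with the orientation flip). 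The structure $\widehat\varphi_{ts}$ comes from the same $\SO(3)$ ansatz used in the nilmanifold propositions: rotate the $\omega_i$ relative to the $\eta_i$ (for instance $\eta_i\w\omega_i\mapsto-\eta_i\w\omega_i$ for a fixed transposition). This keeps the metric and orientation and, by the same computation as for $\varphi_{ts}$, gives a co-closed structure. Condition \eqref{equ: heterotic system 1} therefore holds in every case with $\tau_1=0$.

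Because all structures are nearly parallel or co-closed with $\tau_1=0$, the torsion is $T_\varphi=\tfrac{\tau_0}{6}\varphi-\tau_3$ by \eqref{equ: definition of T}. I will compute $\d T_\varphi$ explicitly in the basis
\[
\eta_{ij}\w\omega_k,\qquad \omega_i\w\omega_j,\qquad \eta_{123}\w(\ldots).
\]
For $\varphi_{ts}$ and $\varphi_{np}$ this is just $\d T=\tfrac{\tau_0}{6}\d\varphi+\ldots$, a combination of $\sum\omega_i\w\omega_i$ and $\sum_{\text{cyc}}\eta_{ij}\w\omega_k$ with explicit coefficients.

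Next I assemble the gauge fields as a sum of instantons, one per factor of $G$, so that $\langle F\w F\rangle$ splits into a sum with independent signed coefficients coming from the choice of $\mathrm{ad}$-invariant form on each factor.

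\begin{enumerate}
\item \emph{Ansatz connection.} The $3$-dimensional ansatz connection is $A=\sum_i\eta_i\otimes e_i$ with $\{e_i\}$ a basis of $\su(2)$ or $\sl(2,\R)$, possibly rescaled. Its curvature is a combination of $\omega_i$ and $\eta_j\w\eta_k$. The rescaling and the choice of real form are fixed so that the $\eta_j\w\eta_k$ terms cancel or are proportional to $\omega$, making $F_A$ a $\g_2$-valued $2$-form, i.e.\ $F_A\w\star\varphi=0$ as in \eqref{equ: heterotic system 2}. It is this sign requirement that forces $\sl(2,\R)$ for $\varphi_{ts}$ and $\su(2)$ for the rotated or squashed structures.
\item \emph{Pulled-back connection.} The second factor is the pullback of the anti-self-dual part of the Levi-Civita connection of $S^4$ (gauge group $\SU(2)$) or of $\CP^2$ (gauge group $\U(1)$). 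Its curvature is horizontal and lies in $\mathrm{span}\{\omega_i\}^\perp$, the $\Lambda^2_-$-type part orthogonal to the $\omega_i$. Such horizontal forms anti-self-dual on the base lie in $\Lambda^2_{14}$ for every structure in our family, so it is automatically an instanton.
\item \emph{Third factor for $S^7$ with $\varphi_{ts}$.} Here I add the canonical $\G_2$-instanton on $S^7=\mathrm{Spin}(7)/\G_2$. Its curvature is a constant multiple of the $\Lambda^2_{14}$-projection of the curvature of the canonical connection, i.e.\ the characteristic connection. This supplies the extra independent term needed to match $\d T$.
\end{enumerate}

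The main step, and the expected obstacle, is the Bianchi identity \eqref{equ: heterotic system 3}. I expand each $\langle F\w F\rangle$ in the same basis of invariant $4$-forms. Assuming the $\mathrm{Sp}(1)$-, resp.\ $\mathrm{Sp}(2)\mathrm{Sp}(1)$-, invariance of all the data, the relevant space of invariant $4$-forms is low dimensional, roughly spanned by
\[
\sum\omega_i\w\omega_i,\qquad \sum\eta_{ij}\w\omega_k,\qquad \text{and the base Pontryagin/Euler-type forms}.
\]
The identity then reduces to a small linear system in the coefficients of the pairings, and I will solve it. The difficulty is sign compatibility: the pairing on each factor must be non-degenerate, but its sign is free, and this freedom is exactly what makes the system solvable. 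For $\varphi_{ts}$ the term from the $\sl(2,\R)$ ansatz comes with the wrong relative sign, which is why the extra factors ($\SU(2)\times\G_2$ on $S^7$, $\U(1)$ on $N^{1,1}$) are needed. On $N^{1,1}$ I must check that the $\U(1)$ curvature squared reproduces the $\omega\w\omega$-type component that remains. On $S^7$ the delicate computation is expressing $\langle F_{\G_2}\w F_{\G_2}\rangle$ in the $3$-Sasakian basis; I would do this via the homogeneous description $\mathrm{Sp}(2)/\mathrm{Sp}(1)$, where all the forms involved are explicit left-invariant forms. If the system still fails there, I would first try to use the freedom in the ansatz parameters (for example the squashing $t$) before adding further factors.
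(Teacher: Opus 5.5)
Your architecture matches the paper's. You use the Reeb-form connection $A=Y_5e^5+Y_6e^6+Y_7e^7$ with structure constants fixed by the instanton condition. You add the pulled-back $A_{ASD}$ (on $S^7$) or $\alpha_{FS}$ (on $N^{1,1}$) to absorb a horizontal $\omega_1^+\wedge\omega_1^+$ term, and the Ivanov--Ivanov characteristic connection on $(S^7,\varphi_{ts})$. For $\varphi_{np}$ and $\widehat{\varphi}_{ts}$ your plan works with $\mathrm{ad}$-invariant pairings. There the instanton condition gives $\textbf{C}=-\tfrac{6}{5}\mathrm{Id}$, resp.\ $2\,\mathrm{Id}$, so $\delta_{ij}$ is bi-invariant, and $dT$ is a multiple of $\langle F_A\wedge F_A\rangle$ plus a multiple of $\omega_1^+\wedge\omega_1^+$.

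The gap is in the two $\varphi_{ts}$ cases, at the Bianchi step you flagged as the main one. There the instanton condition leaves no freedom. It forces $\textbf{C}=\diag(-6,-6,2)$, hence $F^5=-4e^{67}+2\omega_1^+$, $F^6=-4e^{75}+2\omega_2^+$, $F^7=4e^{56}+2\omega_3^+$. The $\mathrm{ad}$-invariant forms on this $\sl(2,\R)$ are exactly $s\,\diag(1,1,-3)$, which gives
\[
\langle F_A\wedge F_A\rangle=s\big(-16\,e^{67}\wedge\omega_1^+-16\,e^{75}\wedge\omega_2^+-48\,e^{56}\wedge\omega_3^+-8\,e^{1234}\big).
\]
On the other side, $dT_{ts}=\tfrac{8}{3}\psi_{ts}=\tfrac{8}{3}\big(e^{1234}+e^{67}\wedge\omega_1^++e^{75}\wedge\omega_2^+-e^{56}\wedge\omega_3^+\big)$. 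Your other factors contribute only multiples of $e^{1234}$ (the horizontal pullbacks) and of $\psi_{ts}$ (the $\G_2$ factor, by $\mathrm{Spin}(7)$-invariance). Matching the $e^{67}\wedge\omega_1^+$ and $e^{56}\wedge\omega_3^+$ components forces $s=0$, i.e.\ a degenerate pairing. So this is not a sign problem that extra factors can repair. Varying the squashing is also unavailable, since $\varphi_{ts}$ is prescribed.

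The missing idea is that the paper gives up $\mathrm{ad}$-invariance on the $\sl(2,\R)$ factor. It uses the merely left-invariant pairing $\langle Y_i,Y_j\rangle=r^{-1}\delta_{ij}$, for which $r\langle F_{A_{ts}}\wedge F_{A_{ts}}\rangle=-16\psi_{ts}+20\,\omega_1^+\wedge\omega_1^+$. The $\varphi_{ts}$ items of the theorem are meant in this relaxed sense, as the paper states right after the theorem in the introduction. With that pairing, $t=1$ in \eqref{equ: dTts} together with $A_{ASD}$ or $\alpha_{FS}$ already solves the system. The $\G_2$ factor, with $\langle F_{\nabla^c}\wedge F_{\nabla^c}\rangle_{\mathfrak{g}_2}=-\tfrac{32}{27}\psi_{ts}$, is not needed for solvability; it only produces the one-parameter family interpolating with the Ivanov--Ivanov solution.
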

The above solutions are described in more detail in Example \ref{example: solution on S7}, \ref{example: solution on N11} and \ref{example: ts twisted}. For the solution on $(S^7, \varphi_{ts})$ with gauge group $\mathrm{SL}(2,\R) \times \mathrm{SU}(2)\times \mathrm{G}_2$, there is a free parameter in \eqref{equ: heterotic system 3} allowing us to include the example obtained in \cite{Ivanov2005}*{\S 6.1} using the characteristic connection (with gauge group $\G_2$, see Example \ref{example: ivanov-ivanov}). Note that in Theorem \ref{teo:3sasaki-unified}, only when the gauge group contains $\SL(2,\R)$ for $\varphi_{ts}$, we allow the pairing on $\mathfrak{g}$ to be only left-invariant, not $\mathrm{ad}$-invariant; otherwise, all pairings in this paper are always $\mathrm{ad}$-invariant. 

The supersymmetric compactification of heterotic supergravity on $M$ yields a $3$-dimensional manifold, either Minkowski or anti-de Sitter spacetime. This depends on whether the cosmological constant $\lambda=\tfrac{7}{12}\tau_0$ is zero or non-zero, where $\tau_0:=\tfrac17\star(d\varphi \w \varphi)$. This motivates our second ansatz which involves considering an $S^1$-variation of the $\G_2$-structure. In this case, we view $M$ as an $S^1$-bundle over a $6$-manifold $Q$ endowed with an $S^1$-family of $\SU(3)$-structures obtained by rotating the complex $(3,0)$-form. This generalises the ansatz considered in \cites{FinoG2T2023, hera2026} in the context of strong $\G_2$-structures with torsion. The upshot is that this allows us to vary the torsion forms $\tau_0$ and $\tau_1$ while again keeping the metric and orientation fixed, see Corollary \ref{cor:tforms}. In Theorem \ref{thm: S1- solutions}, we give sufficient conditions to lift solutions of the heterotic $\SU(3)$-system on $Q$ (or Hull-Strominger system when $Q$ is complex) to solutions of the heterotic $\G_2$-system on $M$ with an $S^1$-family of non-equivalent $\G_2$-structures $\varphi_t$ with different cosmological constant $\lambda$.

We give applications of Theorem \ref{thm: S1- solutions} in a few explicit examples. In Example \ref{ex: on s3s3}, we use an almost Bismut Hermitian Einstein $\SU(3)$-structure on $Q=S^3 \times S^3$ to construct an abelian solution on $M=S^3\times S^3\times S^1$. In Example \ref{ex: n32 again}, we apply the $S^1$-ansatz to the $\SU(2)$ solution obtained from Theorem \ref{teo:nilexistence} on the nilmanifold $ S^1 \times \Gamma  \backslash  N_{3,2}$, where $\mathrm{Lie}(N_{3,2})=\mathfrak{n}_{3,2}$.
As already mentioned above,  the latter solution in fact induces the characteristic connection. The $S^1$-ansatz yields a family of $\G_2$ $3$-forms $\varphi_t$ satisfying \eqref{equ: heterotic system 1}
but not co-closed in general, and still solving the heterotic $\G_2$-system. In particular, the latter family includes the non co-closed solution found in \cite{Ivanov2005}*{\S 6.2}. Lastly in Example \ref{ex: more characteristic connection}, we extend the original solutions found in \cite{Fernandez2011} on certain Heisenberg nilmanifold
to more general examples.

It is worth pointing out that both of our ans\"atze are applicable to more general situations. For instance, the $\SO(3)$ ansatz can be applied to general Lie groups, not just $2$-step nilpotent ones, and one can also consider general Aloff-Wallach spaces $N_{k,p}$ together with the $\G_2$-instantons found in \cite{BallGoncalo}, among other $7$-manifolds with $\SO(4)$-structures. Theorem \ref{thm: S1- solutions} can be generalised to allow for weaker conditions on the torsion of the $\SU(3)$-structure to obtain $S^1$-invariant solutions to \eqref{equ: heterotic system 1}-\eqref{equ: heterotic system 3}, see Proposition \ref{prop: S1- solutions}. 
Despite the special nature of our ans\"atze, it is striking that they recover most of the known solutions of the heterotic $\mathrm{G}_2$-system.

The outline of the paper is as follows: Section \ref{sec: preliminaries} contains the basics on $\mathrm{SU}(3)$- and $\mathrm{G}_2$-structures. The first ansatz is developed in Section \ref{sec:so3fam}, where Theorems \ref{teo:nilexistence} and \ref{teo:3sasaki-unified} are proved. Section \ref{sec: s1 invariant} contains the second, $S^1$-invariant ansatz, yielding solutions arising from $\mathrm{SU}(3)$-structures with special torsion.

\smallskip 

\noindent {\bf Acknowledgements:} This research was supported by MATHAMSUD Regional Program 24-MATH-12. VdB is partially supported by the S\~ao Paulo Research Foundation (Fapesp) grant [2024/19272-5]. UF was partially supported by Fapesp grant [2023/12372-1] during this project. AM was funded by Fapesp grant [2021/08026-5] and BRIDGES ANR--FAPESP: ANR-21-CE40-0017. The authors are grateful to Mario Garcia-Fern\'andez for helpful discussions on the topic. 


\section{Preliminaries}\label{sec: preliminaries}

In this section, we gather basic facts about $\mathrm{G}_2$- and $\mathrm{SU}(3)$-structures that will be used throughout the article and fix our conventions. Further details can be found in the standard references \cites{Bryant2003, Salamon1989}.

\subsection{Background on \texorpdfstring{$\rmG_2$}{}-structures} 
Let $(M,\vp)$ denote a $7$-manifold endowed with a $\rmG_2$-structure determined by the $3$-form $\vp$. Using $\vp$, we define a Riemannian metric $g_{\vp}$ and volume form $\vol_\vp$  on $M$ by
\begin{equation}
    6 g_\varphi(X,Y) {\rm vol}_\varphi = (X \lrcorner \varphi) \w (Y \lrcorner \varphi) \w \varphi, \label{def of metric and vol}
\end{equation}
where $X,Y$ are arbitrary vector fields and $\lrcorner$ denotes contraction. We denote the associated Hodge star operator by $\star$ and write $\psi:=\star \varphi$ for the dual $4$-form. As $\rmG_2$ modules, the space of differential forms $\Lm^\bullet(M)$ decompose into irreducible representations:
\begin{align*}
\Lm^1(M) &= \Lm^1_7,\\
\Lm^2 (M) &= \Lm^2_7 \oplus \Lm^2_{14}\\
\Lm^3(M) &= \langle \vp \rangle \oplus \Lm^3_7 \oplus \Lm^3_{27},
\end{align*}
where the subscript denotes the dimension of the irreducible module. We get the corresponding splitting for $\Lm^4$, $\Lm^5$ and $\Lm^6$ using the Hodge star operator. The above spaces can be explicitly defined by:
\begin{align}
\Lm^2_7&=\{\al\in \Lm^2\ |\ \star(\al \w \vp)=+2\al\},\nonumber\\
\Lm^2_{14}&=\{\al\in \Lm^2\ |\ \star(\al \w \vp)=-\al\},\label{equ: lambda214 1}\\
&=\{\al\in \Lm^2\ |\ \al\w \psi=0\},\label{equ: lambda214 2}\\
\Lm^3_7&=\{\star(\al\w \vp)\ |\ \al \in \Lm^1\},\nonumber\\
\Lm^3_{27}&=\{\al\in \Lm^3\ |\ \al \w \vp=0\ \text{\ and \ }\al \w \psi=0 \}.\nonumber
\end{align}
Following \cite{Bryant2003}, the $\rmG_2$ torsion forms $\tau_i$ are defined by
\begin{align}
    \d\vp &= \tau_0 \psi + 3 \tau_1 \w \vp + \star \tau_3, \label{equ: g2 torsion 1}\\
    \d\psi &= 4 \tau_1 \w \psi + \tau_2 \w \vp,\label{equ: g2 torsion 2}
\end{align}
where $\tau_0\in C^\infty(M)$, $\tau_1 \in \Lm^1$, $\tau_2 \in \Lm^2_{14}$ and $\tau_3 \in \Lm^3_{27}$.   

\begin{defi}
    If $\tau_2=0$, i.e. \eqref{equ: heterotic system 1} holds, then the $\rmG_2$-structure defined by $\vp$ is said to be `integrable' or  `$\rmG_2$ with torsion'. If, in addition, $\tau_1=0$ i.e. $d\psi=0$, then the $\rmG_2$-structure is called `co-closed'. 
\end{defi}

It was shown in \cite{Friedrich2001}*{Theorem 4.7} that $\tau_2=0$ if and only if there exists a $\rmG_2$ connection, i.e a connection preserving $\vp$, with totally skew-symmetric torsion $T_{\vp}$. Moreover, this connection is unique; this is called the {\em characteristic connection} and we shall denote it by $\nabla^c$. 
Its torsion $3$-form is explicitly given by
\begin{align}
\begin{split}
     T_{\vp} &= \frac{1}{6} \star (\d\vp \w \vp) \vp - \star \d\vp + \star(4\tau_1 \w \varphi) \label{equ: definition of T}\\
    &= \frac{1}{6} \tau_0 \vp + \star (\tau_1 \w \varphi) - \tau_3. 
\end{split}
\end{align}
Consider a principal $G$-bundle $P\to (M,\vp)$, or an associated vector bundle $E\to (M,\vp)$, with connection $1$-form $A$. 
We say that $A$ is a $\rmG_2$-instanton if its curvature form $F_A:=\d A+\frac{1}{2}[A\w A]$ lies in $\Omega^2_{14}(\mathrm{ad}P)$, or equivalently, in $\Omega^2_{14}(\mathrm{End}(E))$. From \eqref{equ: lambda214 2}, this is equivalent to the condition:
\begin{equation}
    F_A \wedge \psi =0. \label{equ: definition of instanton}
\end{equation}
This definition was introduced in \cite{Carrion1998}, generalising the
notion of anti-self-dual instanton in dimension $4$ cf. \cite{AtiyahASD}. 

\subsection{Background on \texorpdfstring{$\mathrm{SU}(3)$}{}-structures}  \label{sec:backsu3}
An $\SU(3)$-structure on a $6$-manifold $Q$ is given by a tuple $(g_{\omega},J,\omega,\Upsilon)$, where $g_{\omega}$ is a Riemannian metric, $J$ is a compatible almost complex structure, $\omega=g_\omega(J\cdot,\cdot)$ is the K\"ahler $2$-form and $\Upsilon$ is a complex $3$-form such that
\begin{equation*}
\omega \wedge \Upsilon=0,\qquad
\frac{i}2\Upsilon\wedge\overline{\Upsilon}=\frac23 \omega^3=4{\rm vol}_{\omega},
\end{equation*}
where $\omega^3=\omega \w \omega \w \omega$.
We denote by $\Upsilon_+$ and $\Upsilon_-$  the real and imaginary parts of $\Upsilon$, respectively. In \cite{Hitchin2000} Hitchin showed that the pair $(\omega, \Upsilon_+)$ determines the entire $\mathrm{SU}(3)$-structure, so we shall simply denote the $\SU(3)$-structure on $Q$ by the pair $(\omega, \Upsilon_+)$. 

Analogous to the $\G_2$ case, the space of differential forms $\Lambda^{\bullet}(Q)$ decompose into $\SU(3)$ irreducible modules:
\begin{align}
\begin{split}
\Lm^2 (Q) &= \langle\omega\rangle \oplus \Lm^2_{6}\oplus \Lm^2_8,\label{eq:L2su3}\\
\Lm^3(Q) &= \langle\Upsilon_+\rangle \oplus \langle\Upsilon_-\rangle \oplus \Lm^3_{6}\oplus \Lm^3_{12},
\end{split}
\end{align}
where each of the above irreducible modules can be characterised as follows:
\begin{align*}
\Lm^2_{6}&=\{\alpha\in \Lm^2\mid J\alpha=-\alpha\},\\
\Lm^2_8 &=\{\alpha\in\Lambda^2\mid J\alpha=+\alpha,\; \alpha\wedge \omega^2=0\},\\
\Lm^3_6&=\{\alpha\wedge \omega\mid \alpha\in \Lm^1\},\\
\Lm^3_{12}&=\{\gamma\in \Lm^3\mid \gamma\wedge \omega=0, \, \gamma\wedge \Upsilon_{\pm}=0\}.
\end{align*}
Given a $k$-form $\alpha$, we shall write 
$(\alpha)^k_l$ for its projection to $\Lambda^k_l$. Note that $\Lm^2_6$ is the real vector space underlying the space of complex $2$-forms of type $(2,0)+(0,2)$, and similarly $\Lm^2_8$ underlies the space of $2$-forms of type $(1,1)$ which are orthogonal to $\langle\omega\rangle$.

Following \cites{BeVe07, ChSa02}, the $\SU(3)$ torsion forms $\pi_i, \sigma_i, \nu_i$ are defined by
\begin{eqnarray}
\d \omega &=& -\frac32 \sigma_0 \Upsilon_++\frac32 \pi_0\Upsilon_-+\nu_1\wedge \omega+\nu_3,\nonumber\\
\d \Upsilon_+&=&\pi_0\omega^2+\pi_1\wedge \Upsilon_+-\pi_2\wedge \omega,\label{eq:torsu3}\\
\d \Upsilon_-&=&\sigma_0\omega^2+\pi_1\wedge \Upsilon_--\sigma_2\wedge \omega,\nonumber
\end{eqnarray}
where $\sigma_0,\pi_0\in C^\infty(Q)$, $\pi_1,\nu_1\in \Lm^1$, $\pi_2,\sigma_2\in \Lm^2_8$ and $\nu_3\in \Lm^3_{12}$. The underlying almost complex structure $J$ is integrable if and only if the torsion forms $\pi_0,\sigma_0,\pi_2,\sigma_2$ all vanish. 

Analogous to the $\mathrm{G}_2$ case,  the existence of a connection preserving $(\omega,\Upsilon_+)$ with totally skew-symmetric torsion is equivalent to $\pi_2=\sigma_2=0$ and $\pi_1=2\nu_1$, see \cite{Ivanov2005}*{Theorem 4.1}. The associated torsion tensor $T_{\omega}$ is then explicitly given by 
\begin{align}
    \begin{split}\label{eq: def T_omega}
        T_\omega:=&\ J(d\omega) - \hat{N}_J\\
                 =& \ \frac{\pi_0}{2}\Upsilon_++\frac{\sigma_0}{2}\Upsilon_-+J\nu_1\wedge\omega+J\nu_3,
    \end{split}
\end{align}
where $\hat{N}_J:=-2\pi_0\Upsilon_+-2\sigma_0 \Upsilon_-$ denotes the skew-symmetric part of the Nijenhuis tensor (after lowering the index using $g_{\omega}$). This connection  is also unique, and we shall call it as the \emph{Bismut connection} \cite{Bi89}.

As above, a connection $1$-form $A$ { with values in $\mg$} is called an \emph{$\SU(3)$-instanton} (or traceless Hermitian Yang-Mills) if its curvature $2$-form $F_A$ satisfies
\begin{equation}\label{eq: SU3-instanton}
    F_A\wedge \omega^2=F_A\wedge\Upsilon_+=0.
\end{equation}
In terms of the decomposition \eqref{eq:L2su3}, condition \eqref{eq: SU3-instanton} means that the $2$-form part of the curvature lies in $\Omega^2_8$. 
In analogy to \eqref{equ: heterotic system 1}-\eqref{equ: heterotic system 3}, we say that $(\omega,\Upsilon_+,A)$ satisfies the \emph{heterotic $\SU(3)$-system} if 
$\pi_2=\sigma_2=0$, $\pi_1=2\nu_1$, $A$ is an $\SU(3)$-instanton and the following $\SU(3)$ heterotic Bianchi identity holds:
\begin{align}\label{eq: het_SU3-Bianchi-ident}
    \d T_\omega = \langle F_A\wedge F_A\rangle_{\mathfrak{g}},
\end{align} for some bi-invariant pairing $\langle\cdot,\cdot\rangle_{\mathfrak{g}}$ on $\mg$.
When the underlying manifold is complex, i.e. $\hat{N}_J=0$, this is often called the Strominger system in the literature \cite{Str86}.
There is a natural relation between $\G_2$- and $\SU(3)$-structures provided by $S^1$-bundle constructions which we shall consider in Section \ref{sec: s1 invariant}.


\section{
\texorpdfstring{$\mathrm{SO}(3)$}{}-family of integrable \texorpdfstring{$\rG_2$}{}-structures}
\label{sec:so3fam}

\subsection{A general ansatz}

Let $(M^7,g,\mathrm{vol})$ denote an oriented Riemannian spin manifold. It is well-known that any $\mathrm{G}_2$-structure compatible with $(g,\mathrm{vol})$ is determined by a section of an $\mathbb{RP}^7\cong\mathrm{SO}(7)/\mathrm{G}_2$ bundle over $M$ cf.~\cite{Bryant2003}. If we now assume that $(M,g)$ admits a triple of orthogonal vector fields $\{e_5,e_6,e_7\}$, then we can distinguish an $\mathrm{SO}(3)\cong\mathbb{RP}^3$ family of compatible $\mathrm{G}_2$-structures in the latter $\mathbb{RP}^7$ family. Explicitly, any such compatible $\mathrm{G}_2$ $3$-form can be expressed as
\begin{gather}\label{eq:phians}
\varphi=\sigma_1^+\wedge E^5+\sigma_2^+\wedge E^6+\sigma_3^+\wedge E^7+E^{567},
\end{gather}
where $\{\sigma^+_1,\sigma^+_2,\sigma^+_3\}$ are self-dual $2$-forms on the transverse distribution $\langle e_5,e_6,e_7\rangle^{\perp}$ and
\begin{equation}\label{eq:Ebe}
\begin{pmatrix}
E^5 \\
E^6 \\
E^7 
\end{pmatrix}
:=
\textbf{B}  
\begin{pmatrix}
e^5 \\
e^6 \\
e^7 
\end{pmatrix},
\end{equation}
where $\textbf{B}$ denotes an $\mathrm{SO}(3)$-valued function on $M$. The matrix $\textbf{B}$ gives an explicit parametrisation of the aforementioned $\mathbb{RP}^3$ family. We can choose a local orthonormal co-framing $\{e^1,e^2,e^3,e^4\}$ of the transverse distribution so that $\{\sigma_i^+\}$ are given by
\begin{equation}\label{eq: def of sigmaplus}
	\sigma_1^+=e^{13}-e^{24}, \quad \sigma_2^+=-e^{14}-e^{23},\quad \sigma_3^+=e^{12}+e^{34}. 
\end{equation}
One checks directly using \eqref{def of metric and vol}:
\begin{gather}
g_{\varphi}=(e^1)^2+(e^2)^2+(e^3)^2+(e^4)^2+(e^5)^2+(e^6)^2+(e^7)^2,\\
\vol_{\varphi}=e^{1234567},\\
\psi=e^{1234}+\sigma_1^+\wedge E^{67}+\sigma_2^+\wedge E^{75}+\sigma_3^+\wedge E^{56}.\label{eq:psians}
\end{gather}
We emphasise here that $\psi=\star\varphi$ does depend on $\textbf{B}$ by \eqref{eq:Ebe}. 

Observe that from our hypothesis $M^7$ admits a natural $\mathrm{SO}(4)$-structure (this  is the subgroup of $\mathrm{G}_2$ preserving the distribution $\langle e_5,e_6,e_7\rangle$). Two important classes of such manifolds are given by Lie groups and 3-Sasakian manifolds, which we examine below.  

Consider now a real $3$-dimensional Lie group $G$ with associated Lie algebra $\mathfrak{g}$. We choose a basis $\{Y_5,Y_6,Y_7\}$ for $\mathfrak{g}$ such that
$
[Y_i,Y_j]= c_{ij}^k Y_k.
$
It will sometimes be convenient to use the matrix notation $\textbf{C}$, where  
\[ \textbf{C} = 
\begin{pmatrix}
c_{67}^5 & c_{75}^5 & c_{56}^5\\
c_{67}^6 & c_{75}^6 & c_{56}^6\\
c_{67}^7 & c_{75}^7 & c_{56}^7
\end{pmatrix}.
\]
On the trivial principal $G$ bundle $P=G \times M$, we define a natural connection $1$-form $A$ by 
\begin{equation}\label{eq:A}
A:=Y_5 e^5 + Y_6 e^6 + Y_7 e^7.
\end{equation}
It follows that its curvature $2$-form $F_A$ can be expressed as
\begin{equation} 
    F_A   = \sum_{i,j,k}Y_k(de^k+\frac{1}{2}c_{ij}^k e^{ij}).       \label{eq: FA}
\end{equation}
Using the vector notations $\textbf{e}:=(e^5, e^6,e^7)^T$ and $\textbf{e}^2:=(e^{67}, e^{75},e^{56})^T$, the instanton condition \eqref{equ: definition of instanton} can be expressed as
\begin{equation} 
(\d\textbf{e}+\textbf{C} \textbf{e}^2)
 \wedge \psi =0.\label{eq: instanton condition 2}
\end{equation}
where $\textbf{C} \textbf{e}^2$ should be understood as matrix multiplication. 
Furthermore, using the fact that $\textbf{B} \in \mathrm{SO}(3)$, one easily computes from \eqref{eq:Ebe}:
\begin{equation}
\textbf{E}^2
= \textbf{B} 
\textbf{e}^2,\label{eq: BE2}
\end{equation}
where following the above notation we write $\textbf{E}:=(E^5, E^6, E^7)^T $ and $\textbf{E}^2:=(E^{67}, E^{75}, E^{56})^T$.
In terms of $\textbf{E}$,
we can equivalently rewrite the instanton equation \eqref{eq: instanton condition 2} as
\begin{equation}
(\d\textbf{E} + 
\textbf{B}\textbf{C}\textbf{B}^T
\textbf{E}^2 ) \wedge \psi =0 
\label{eq: instanton condition 3}.
\end{equation}
\textbf{The setup:} The instanton condition \eqref{eq: instanton condition 3} gives a relation between (i) the structure of the gauge group $G$ determined by $\textbf{C}$, (ii) the choice of $\mathrm{G}_2$-structure on $M$ determined by $\textbf{B}$ and (iii) the structure equations of the underlying manifold determined by $de^{i}$ for $i=5,6,7$. Our goal is to find new solutions to the heterotic $\mathrm{G}_2$-system \eqref{equ: heterotic system 1}-\eqref{equ: heterotic system 3} by varying these three conditions, and additionally by choosing a suitable non-degenerate  symmetric bilinear form on the Lie algebra $\mathfrak{g}$. In view of this, we shall assume that $G$ is a reductive Lie group. Thus, $\mathfrak{g}$  is either abelian (i.e. $\R^3$), $\mathfrak{su}(2)$ or $\mathfrak{sl}(2,\R)$. In \cite{Mil76}*{\S 4} Milnor showed that one can always choose a basis of $\mg$ such that the matrix $\textbf{C}$ is diagonal i.e. $ \textbf{C} = {\rm diag}(\lambda_5,\lambda_6,\lambda_7)$. In this case, the $\mathrm{G}_2$-instanton condition \eqref{eq: instanton condition 2} reads:
\begin{equation}
    \big(de^5+ \lambda_5 e^{67}  \big)\wedge \psi = \big(de^6+ \lambda_6 e^{75}\big)\wedge \psi= \big(de^7+ \lambda_7 e^{56} \big) \wedge \psi=0.
   \label{eq: instanton condition 1}
\end{equation}

Fixing the choice of diagonal $\textbf{C}$ can be viewed as a gauge fixing condition for our connection form $A$. Depending on the signs of $\lambda_5,\lambda_6,\lambda_7$ we get different isomorphism classes for $\mg$: $\mg$ is abelian if all the $\lambda_i$ are zero,  $\mg=\su(2)$ if all the $\lambda_i$ are non-zero and all have the same sign, and $\mg=\sl(2,\R)$ if all the $\lambda_i$ are non-zero but do not all have the same sign. In the non-abelian cases, the (unique up to a constant factor) bi-invariant form on $\mathfrak{g}$ is given by $\langle\cdot ,\cdot \rangle_{\mathfrak{g}}={\rm diag}(\lambda_6 \lambda_7,\lambda_5\lambda_7,\lambda_5\lambda_6)$ with respect to the basis $\{Y_5,Y_6,Y_7\}$.


\subsection{\texorpdfstring{$2$}{}-step nilpotent Lie group case}

Motivated by the results in \cite{CdBM}, in this section we investigate our $\mathrm{SO}(3)$ ansatz on certain classes of $2$-step nilpotent Lie group. We shall assume that the underlying manifold $M$ is a nilpotent Lie group with Lie algebra $\mn$ and it admits a left-invariant co-framing $\{e^i\}_{i=1}^7$ (i.e. a basis of $\mn^*$) satisfying
the following structure equations:
\begin{equation}
d\begin{pmatrix}
e^{5}\\
e^{6}\\
e^{7}
\end{pmatrix} = \textbf{A}^+\begin{pmatrix}
\sigma_1^+\\
\sigma_2^+\\
\sigma_3^+
\end{pmatrix}+
\textbf{A}^-\begin{pmatrix}
\sigma_1^-\\
\sigma_2^-\\
\sigma_3^-
\end{pmatrix},\label{eq: str nil general}
\end{equation}
where $\textbf{A}^\pm$ are arbitrary $3 \times 3$ matrices, $\sigma_i^+$ are given in \eqref{eq: def of sigmaplus},
\begin{equation*}
	\sigma_1^-:=e^{13}+e^{24}, \quad \sigma_2^-:=e^{14}-e^{23},\quad \sigma_3^-:=e^{12}-e^{34},
\end{equation*}
denote anti-self-dual $2$-forms on $\langle e^1,e^2,e^3,e^4\rangle$, and $de^i=0$ for $i=1,2,3,4$. It follows that locally $M$ can viewed as a $\mathbb{T}^3$-bundle over $\mathbb{T}^4$. 
 First we consider the instanton condition \eqref{equ: heterotic system 2}:
\begin{pro} \label{pro:dAA}
Suppose that the structure equations \eqref{eq: str nil general} hold. Then the connection $A$ given by \eqref{eq:A}  is a $\mathrm{G}_2$-instanton with respect to the $\mathrm{G}_2$-structure induced by $\varphi$ given by \eqref{eq:phians} if and only if $\textbf{C}=-2\textbf{A}^+\textbf{B}$.
\end{pro}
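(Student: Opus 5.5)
The plan is to work directly with the reformulation \eqref{eq: instanton condition 3} of the instanton equation, in terms of $\textbf{E}$ and $\textbf{E}^2$, and to reduce it to a matrix identity. Since the co-framing is left-invariant and we only consider left-invariant $\mathrm{G}_2$-structures here, I take $\textbf{B}$ to be a constant matrix in $\mathrm{SO}(3)$. Then \eqref{eq:Ebe} and \eqref{eq: str nil general} give
\[
\d\textbf{E}=\textbf{B}\,\d\textbf{e}=\textbf{B}\textbf{A}^+\boldsymbol{\sigma}^+ +\textbf{B}\textbf{A}^-\boldsymbol{\sigma}^-,
\]
where $\boldsymbol{\sigma}^\pm:=(\sigma^\pm_1,\sigma^\pm_2,\sigma^\pm_3)^T$. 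The task is therefore to compute the wedge products of $\psi$, given by \eqref{eq:psians}, with three kinds of $2$-forms: $\sigma_i^+$, $\sigma_i^-$ and the components of $\textbf{E}^2$.

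I would use the following elementary identities on the $4$-dimensional distribution spanned by $e^1,\dots,e^4$:
\begin{itemize}
\item $\sigma_i^+\wedge\sigma_j^+=2\delta_{ij}e^{1234}$;
\item $\sigma_i^-\wedge\sigma_j^+=0$;
\item any $2$-form on this distribution wedged with $e^{1234}$ vanishes;
\item any wedge of three of $E^5,E^6,E^7$ containing a repeated factor vanishes.
\end{itemize}
From these one gets the following products with $\psi$:
\begin{itemize}
\item $\sigma_i^-\wedge\psi=0$;
\item $\sigma_i^+\wedge\psi=2e^{1234}\wedge(\textbf{E}^2)_i$, since only the term $\sigma_i^+\wedge(\textbf{E}^2)_i$ of $\psi$ survives;
\item $(\textbf{E}^2)_i\wedge\psi=e^{1234}\wedge(\textbf{E}^2)_i$. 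Here the terms $\sigma_j^+\wedge(\textbf{E}^2)_j\wedge(\textbf{E}^2)_i$ drop out because any product of two elements of $\textbf{E}^2$ is a $4$-form in only three covectors $E^5,E^6,E^7$.
\end{itemize}

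Substituting into \eqref{eq: instanton condition 3}, the instanton condition becomes
\[
\big(2\textbf{B}\textbf{A}^+ +\textbf{B}\textbf{C}\textbf{B}^T\big)\,e^{1234}\wedge\textbf{E}^2=0 .
\]
Since the three $5$-forms $e^{1234}\wedge E^{67}$, $e^{1234}\wedge E^{75}$, $e^{1234}\wedge E^{56}$ are linearly independent, this holds if and only if $2\textbf{B}\textbf{A}^++\textbf{B}\textbf{C}\textbf{B}^T=0$. Multiplying on the left by $\textbf{B}^T$ and on the right by $\textbf{B}$, and using $\textbf{B}^T\textbf{B}=\mathrm{Id}$, this is equivalent to $\textbf{C}=-2\textbf{A}^+\textbf{B}$. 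Every step is reversible, so we get both directions.

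The main point needing care is the bookkeeping in the second step: the ordering and sign conventions of $\textbf{E}^2=(E^{67},E^{75},E^{56})^T$ have to match those of $\sigma_i^+\wedge E^{jk}$ in $\psi$, so that the factor is exactly $2$ with the correct index pairing. The formula for $\psi$ in \eqref{eq:psians} is built precisely so that this pairing is diagonal, so I expect no difficulty beyond checking $\sigma_1^+\wedge\sigma_1^+=2e^{1234}$ and the vanishing of the mixed products.
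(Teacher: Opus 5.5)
Your proposal is correct and is essentially the paper's proof: substitute the structure equations into \eqref{eq: instanton condition 3}, discard the $\sigma_i^-$ terms, and read off $2\textbf{B}\textbf{A}^+ + \textbf{B}\textbf{C}\textbf{B}^T=0$ from \eqref{eq:psians}. You additionally make explicit the identities $\sigma_i^+\wedge\psi=2e^{1234}\wedge(\textbf{E}^2)_i$ and $(\textbf{E}^2)_i\wedge\psi=e^{1234}\wedge(\textbf{E}^2)_i$, which the paper leaves as ``not hard to see''; your assumption that $\textbf{B}$ is constant is harmless and matches how the result is used, and since the condition is pointwise, working with $\textbf{B}\,\d\textbf{e}$ in place of $\d\textbf{E}$ gives the same conclusion even for an $\mathrm{SO}(3)$-valued function $\textbf{B}$.
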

\begin{proof}
    Using \eqref{eq: str nil general}, we can rewrite \eqref{eq: instanton condition 3} as
    \begin{equation}
\Big(\textbf{B}\textbf{A}^+\w \begin{pmatrix}
\sigma_1^+\\
\sigma_2^+\\
\sigma_3^+
\end{pmatrix} + 
\textbf{B}\textbf{C}\textbf{B}^T
\textbf{E}^2\Big ) \wedge \psi =0,
\end{equation}
where we used that $\sigma^-_i\wedge\psi=0$. Multiplying the above by $\textbf{B}^T$ on the left, it is not hard to see using expression \eqref{eq:psians} for $\psi$ that $A$ is a $\mathrm{G}_2$-instanton if and only if $-2\textbf{A}^+=\textbf{C}\textbf{B}^T$. This concludes the proof.
\end{proof}
Next we consider the integrability condition \eqref{equ: heterotic system 2}:
\begin{pro}\label{prop: instanton implies coclosed}
    If the connection $A$ given by \eqref{eq:A}  is a $\mathrm{G}_2$-instanton with respect to the $\mathrm{G}_2$-structure induced by $\varphi$ \eqref{eq:phians}, then $\varphi$ is co-closed.
\end{pro}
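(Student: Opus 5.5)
The plan is a direct computation of $\d\psi$ from \eqref{eq:psians}, followed by the instanton condition of Proposition~\ref{pro:dAA}. Throughout I work left-invariantly and take $\textbf{B}$ to be a constant matrix in $\mathrm{SO}(3)$. The proof of Proposition~\ref{pro:dAA} already uses this when it writes $\d\textbf{E}=\textbf{B}\,\d\textbf{e}$. Since $\d e^i=0$ for $i\le 4$, we have $\d e^{1234}=0$ and $\d\sigma_i^\pm=0$. Hence
\[
\d\psi=\sigma_1^+\wedge \d(E^{67})+\sigma_2^+\wedge \d(E^{75})+\sigma_3^+\wedge \d(E^{56}),
\]
and each $\d(E^{jk})=\d E^j\wedge E^k-E^j\wedge \d E^k$. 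By \eqref{eq: str nil general} and \eqref{eq:Ebe},
\[
\d\textbf{E}=\textbf{M}\,\boldsymbol{\sigma}^+ +\textbf{B}\textbf{A}^-\boldsymbol{\sigma}^-,\qquad \textbf{M}:=\textbf{B}\textbf{A}^+ .
\]

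Next I use the wedge identities on the $4$-plane: $\sigma_i^+\wedge\sigma_j^-=0$ and $\sigma_i^+\wedge\sigma_j^+=2\delta_{ij}e^{1234}$. The anti-self-dual part $\textbf{A}^-$ therefore drops out entirely. Only the diagonal pairings $\sigma_i^+\wedge\sigma_i^+$ survive, and $\d\psi$ becomes $2e^{1234}$ wedged with a linear combination of $E^5,E^6,E^7$. Indexing rows and columns of $\textbf{M}$ by $5,6,7$ and collecting terms over the cyclic triples $(i,j,k)$, the coefficient of $E^5$ is $M_{76}-M_{67}$, and cyclically for $E^6$ and $E^7$. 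So
\[
\d\psi=0 \iff \textbf{M}=\textbf{B}\textbf{A}^+ \text{ is symmetric}.
\]
This index bookkeeping is the one place where care is needed with signs and cyclic orderings. I would double-check it on a single entry by hand.

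Finally I invoke Proposition~\ref{pro:dAA}: the instanton condition is $\textbf{C}=-2\textbf{A}^+\textbf{B}$. Since $\textbf{B}\in\mathrm{SO}(3)$, this gives $\textbf{A}^+=-\tfrac12\textbf{C}\textbf{B}^T$, and hence
\[
\textbf{M}=-\tfrac12\,\textbf{B}\textbf{C}\textbf{B}^T .
\]
This is symmetric because $\textbf{C}$ is symmetric. In the Milnor gauge fixed above, $\textbf{C}=\diag(\lambda_5,\lambda_6,\lambda_7)$ is diagonal. So $\d\psi=0$, i.e. $\tau_1=\tau_2=0$ and $\varphi$ is co-closed.

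The only real subtlety is the hypothesis that $\textbf{C}$ is symmetric. This is where the choice of a Milnor basis for a three-dimensional (unimodular, here reductive) $\mathfrak{g}$ is used, and I would state it explicitly. The sign conventions in $\d(E^{jk})$ and in the cyclic labelling of $\textbf{e}^2$ are the main points where errors could creep in, but none of them affects the symmetric/antisymmetric dichotomy.
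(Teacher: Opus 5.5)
Your proof is correct and follows essentially the same route as the paper. Both arguments compute $\d\psi$ directly, drop the $\textbf{A}^-$ terms via $\sigma_i^+\wedge\sigma_j^-=0$, pair the self-dual terms via $\sigma_i^+\wedge\sigma_j^+=2\delta_{ij}e^{1234}$, and conclude from $\textbf{C}=-2\textbf{A}^+\textbf{B}$ with $\textbf{C}$ symmetric. The only cosmetic difference is that you expand in the $E$-coframe and phrase the vanishing as symmetry of $\textbf{B}\textbf{A}^+$, whereas the paper writes $\d\psi=-\mathrm{tr}(\kappa_1\textbf{C})\,e^{1234}$, with $\kappa_1$ the skew-symmetric matrix of $1$-forms built from $e^5,e^6,e^7$.
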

\begin{proof}
  Using \eqref{eq: BE2} and the structure equations \eqref{eq: str nil general}, we have
  \begin{align*}
  \d \mathbf{E}^2 &= \mathbf{B} \begin{pmatrix}
de^{6}\w e^7-e^6\w de^7\\
de^{7}\w e^5-e^7\w de^5\\
de^{5}\w e^6-e^5\w de^6
\end{pmatrix}
=\mathbf{B} 
\begin{pmatrix}
0 & +e^7 & -e^6\\
-e^7 & 0 & +e^5\\
+e^6 & -e^5 & 0
\end{pmatrix}\w
\begin{pmatrix}
de^{5}\\
de^{6}\\
de^{7}
\end{pmatrix}\\
&=\mathbf{B} 
\begin{pmatrix}
0 & +e^7 & -e^6\\
-e^7 & 0 & +e^5\\
+e^6 & -e^5 & 0
\end{pmatrix}\w\Big(
\textbf{A}^+\begin{pmatrix}
\sigma_1^+\\
\sigma_2^+\\
\sigma_3^+
\end{pmatrix}+
\textbf{A}^-\begin{pmatrix}
\sigma_1^-\\
\sigma_2^-\\
\sigma_3^-
\end{pmatrix}\Big).
\end{align*}
Writing $\kappa_1$ for the matrix of $1$-form consisting of $e^5,e^6,e^7$ 
occurring in the latter equation, 
we now compute using Einstein summation convention: 
\begin{align*}
    d\psi &= \sigma_i^+ \w d(\textbf{E}^{2})_{i}\\
    &= \sigma_i^+ \w(\textbf{B}_{ij} (\kappa_1)_{jk} \w (\textbf{A}^+)_{kp}\sigma^+_p)\\
    &=2(\textbf{B}_{ij} (\kappa_1)_{jk} \w (\textbf{A}^+)_{ki})e^{1234}\\
    &= -(\textbf{B}_{ij} (\kappa_1)_{jk} \w (\textbf{C})_{kq}(\textbf{B}^T)_{qi})e^{1234}\\
    &= - (\kappa_1)_{qk}\w (\textbf{C})_{kq}e^{1234} \\
    &=0,
\end{align*}
where we used $\sigma^+_i \w \sigma_j^-=0$ in the second line,  $\sigma^+_i \w \sigma_j^+=2\delta_{ij}e^{1234}$ in the third line, the instanton condition $\textbf{C}=-2\textbf{A}^+\textbf{B}$ in the fourth line, $\tbfB^T\tbfB= \mathrm{Id}$ in the fifth line and in the final line we used that $\kappa_1$ is skew-symmetric while $\textbf{C}$ is diagonal hence symmetric. This concludes the proof.
\end{proof}
It was shown in \cite{CdBM}*{Theorem 1.1} that for \textit{any} left-invariant co-closed $\mathrm{G}_2$-structure on a $2$-step nilpotent Lie algebra, the associated characteristic connection $\nabla^c$ is a $\mathrm{G}_2$-instanton implies the existence of a basis $\{e_i\}_{i=1}^7$ of $\mn$ such that the $\mathrm{G}_2$-structure is given by $\varphi$ as in \eqref{eq:phians} with $\textbf{B}=\mathrm{Id}$; moreover, $\textbf{A}^+$ has to be a multiple of the identity, see also Remark \ref{rem: relation to characteristic connection} below. Motivated by the latter, we shall henceforth assume 
\begin{equation}
    \textbf{A}^+=\delta\cdot \mathrm{Id}\qquad\text{and}\qquad \textbf{A}^-=\diag(\epsilon_1, \epsilon_3, \epsilon_3),\label{eq:strc}
\end{equation}
where $\delta,\epsilon_i\in \R$. 
\begin{remark}
    The assumption that $\textbf{A}^-$ is diagonal is not really a constraint here since one can always redefine the $2$-forms $\sigma^-_i$ while leaving $\varphi$ unchanged since $\mathrm{SU}(2)\subset\mathrm{G}_2$. The non-trivial hypothesis here is the choice of $\textbf{A}^+$, which one can indeed consider to be more general but we do not investigate this in the present work.
\end{remark}
Since $\textbf{C}$ is diagonal and $\textbf{B}\in \mathrm{SO}(3)$, without loss of generality we can take $\textbf{B} = \diag(1,a,a)$, where $a=\pm 1$. From Proposition \ref{pro:dAA}, the $\mathrm{G}_2$-instanton condition \eqref{equ: heterotic system 2} implies $\textbf{C}=-2\delta\cdot\diag(1, a,a)$. Thus, the gauge group $G$ is abelian when $\delta=0$, $\mathrm{SU}(2)$ when $a=+1$ and $\mathrm{SL}(2,\R)$ when $a=-1$. This shows that the choice of the gauge group (given by $\textbf{C}$) is dependent on the choice of $\mathrm{G}_2$-structure (given by $\textbf{B}$) via the $\mathrm{G}_2$-instanton condition. Furthermore, from Proposition \ref{prop: instanton implies coclosed} we also know $\varphi$ is co-closed hence \eqref{equ: heterotic system 1} holds. Thus, we only need to solve for the heterotic Bianchi identity \eqref{equ: heterotic system 3}.

\begin{remark} \label{rem:isoclass} Using Gong's classification of $7$-dimensional nilpotent Lie algebras \cite{Gon98}, one can list all the possible isomorphism classes of Lie algebras described by \eqref{eq:strc}. 

When $\delta=0$, a Lie algebra $\mathfrak{n}$ verifying \eqref{eq:strc} is isomorphic to one of the following:
\[
\R^7,\quad\R^2\oplus\mh_5,\quad  \R\oplus\mh_3^\C,\quad \mh_{\H},
\]
 where in the standard Salamon's notation \cite{SA1} the above Heisenberg Lie algebras can be described by:
\begin{align*}
    \mathfrak{h}_{2k+1} &= \big(0,...,0,12+...+(2k-1)(2k)\big),\\
    \mathfrak{h}_{3}^{\C}&= (0,0,0,0,12-34,13+24),\\ \mathfrak{h}_\H &= (0,0,0,0,12-34,13+24,14-23), 
\end{align*}
for $k \in \mathbb{N}$.  Indeed, these cases are distinguished by the vanishing pattern of the $\epsilon_i$: either all vanish, or exactly one, two, or three are non-zero.

When $\delta \neq 0$, a Lie algebra $\mathfrak{n}$ verifying \eqref{eq:strc} is isomorphic to one of the following:
\begin{align*}
     \mn_{6,3}\oplus\R &= (0,0,0,0,12,13,23)=\mn_{3,2}\oplus\R \\
     \mn_{7,3,A}&= (0,0,0,0,12,23,24)  \\
        \mn_{7,3,B_1}&= (0,0,0,0,13+23,12-34,14)  \\
     \mn_{7,3,C}&= (0,0,0,0,12+34,23,24)  \\  
\mn_{7,3,D_1}&= (0,0,0,0,12-34,13+24,14-23)=\mh_\H
\end{align*}
The notation on the left is the one used in \cite{Gon98}. On the right, for the first and last Lie algebra, we include the notation used in \cite{CdBM}. As shown in \cites{BFF18,dBMR}, all
these Lie algebras admit co-closed $\G_2$-structures. Observe that the quaternion Heisenberg Lie algebra $\mathfrak{h}_\H$ can occur in both cases with $\delta=0$ and $\delta\neq0$.
\end{remark}

Next we need to compute the torsion.
A direct calculation using \eqref{eq: str nil general} and \eqref{eq:strc} shows that $\tau_0=\frac{4}{7}\delta(2a+1)$  and the torsion $3$-form \eqref{equ: definition of T} is given by
\begin{align*}
T_{\varphi}=  &-\frac{4}{3}\delta(2a+1)e^{567}+\frac13\delta\left(4a-1\right)(e^5\wedge \sigma_1^+)+\frac13\delta\left(2a+1\right)(e^6\wedge\sigma^+_2+e^7\wedge\sigma^+_3)\\
 &+ \left(\epsilon_1 e^5\wedge\sigma^-_1+\epsilon_2 e^6\wedge\sigma^-_2+\epsilon_3 e^7\wedge\sigma^-_3\right).
 \end{align*}
In particular, we see that the $\mathrm{G}_2$-structure is purely co-closed, i.e. $\tau_0=0$, precisely if $\delta=0$. From the structure equations \eqref{eq:strc}, we also compute: 
\begin{align}\label{eq:dT}
 \begin{split}
     \d T_{\varphi} =   &-\frac{4}{3}\delta(2a+1)\left((\delta\sigma^+_1+\epsilon_1\sigma^-_1)\wedge e^{67}+(\delta\sigma^+_2+\epsilon_2\sigma^-_2)\wedge e^{75}\right.\\
     &+\left.(\delta\sigma^+_3+\epsilon_3\sigma^-_3)\wedge e^{56}\right)+2\left(\frac13(8a+1)\delta^2-\epsilon_1^2-\epsilon^2_2-\epsilon^2_3\right)e^{1234} .
 \end{split}
\end{align}
Next we consider the cases $\delta=0$ and $\delta\neq 0$ separately.\\
\noindent \textbf{The $\delta=0$ case.} In this case $\textbf{C}=0$ i.e. $G$ is abelian. Equation \eqref{eq:dT} simplifies to
\begin{equation}
\d T_{\varphi} = -2(\epsilon_1^2+\epsilon_2^2+\epsilon_3^2) e^{1234}.\label{equ dT delta0}
\end{equation}
Observe that $\d T_\varphi=0$ i.e. $\varphi$ is a strong $\mathrm{G}_2$-structure with torsion if and only if $M=\mathbb{T}^7$ is the flat torus; this corresponds to a trivial solution.
Since $G$ is abelian, any non-degenerate symmetric bilinear form on $\mathfrak{g}$ is $\mathrm{ad}$-invariant. We consider the diagonal pairing given by:
\begin{equation}\label{eq:pairing}
\left\langle Y_i,Y_j\right\rangle = ra_{ii}\delta_{ij},
\end{equation}
and hence we get
\begin{equation}
\left\langle F_A\wedge F_A\right\rangle=-2r (a_{11}\epsilon_1^2+a_{22}\epsilon_2^2+a_{33}\epsilon_3^2)e^{1234}.\label{equ fafa delta0}
\end{equation}
Comparing \eqref{equ dT delta0} and \eqref{equ fafa delta0}, we see that provided $(a_{11}\epsilon_1^2+a_{22}\epsilon_2^2+a_{33}\epsilon_3^2) \neq 0$, we can always solve \eqref{equ: heterotic system 3} for $r$ and hence get a solution to the heterotic $\mathrm{G}_2$-system. Note, however, that for any such solution, the pairing $\langle\cdot,\cdot\rangle$ on $\mathfrak{g}$ cannot be negative definite i.e. $ra_{ii}$ cannot all be negative. On the other hand, one can easily check that the pairing can be chosen to be either positive definite or of mixed signature: $(1,2)$ or $(2,1)$, whenever one of the $\epsilon_i$ is non-vanishing. We can summarise the above into:
\begin{pro}\label{pro:del0}
Let $\mn$ be a nilpotent Lie algebra with structure coefficients given by \eqref{eq:strc} with $\delta=0$. Consider on $\mn$ the $\mathrm{SO}(3)$-family of $\mathrm{G}_2$-structures $\varphi$ defined by \eqref{eq:phians} together with the connection $A$ defined by \eqref{eq:A} with gauge group $G$. Then $A$ is a $\mathrm{G}_2$-instanton with respect to $\varphi$ if and only if $G$ is abelian, in which case $\varphi$ is purely co-closed (for any $\textbf{B}\in \mathrm{SO}(3)$).

Moreover, for signatures either positive definite or mixed $(1,2)$ or $(2,1)$, there exists a $\mathrm{ad}$-invariant pairing $\bil$ on $\mg$ with the prescribed signature such that the curvature $F_A$ satisfies the heterotic Bianchi identity. In particular, this yields solutions to the heterotic $\G_2$-system \eqref{equ: heterotic system 1}-\eqref{equ: heterotic system 3}.
\end{pro}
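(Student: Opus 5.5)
The plan is to assemble the statement from Propositions \ref{pro:dAA} and \ref{prop: instanton implies coclosed} together with the torsion computation \eqref{equ dT delta0}.

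\textbf{Instanton condition.} With $\textbf{A}^+=\delta\cdot\mathrm{Id}$ and $\delta=0$, Proposition \ref{pro:dAA} says that $A$ is a $\mathrm{G}_2$-instanton if and only if $\textbf{C}=-2\textbf{A}^+\textbf{B}=0$. This holds for every $\textbf{B}\in\mathrm{SO}(3)$, and $\textbf{C}=0$ means all $c_{ij}^k$ vanish, i.e. $\mathfrak{g}$ is abelian. We should not reduce $\textbf{B}$ to $\diag(1,a,a)$ here, because the statement is for arbitrary $\textbf{B}$. The argument of Proposition \ref{pro:dAA} never used diagonality of $\textbf{C}$, so the equivalence holds for a general structure matrix $\textbf{C}$.

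\textbf{Co-closedness.} Proposition \ref{prop: instanton implies coclosed} then gives $d\psi=0$. Even more directly, the computation in its proof gives $d\psi=2(\textbf{B}_{ij}(\kappa_1)_{jk}\w(\textbf{A}^+)_{ki})e^{1234}$, which vanishes identically when $\textbf{A}^+=0$. For ``purely'' co-closed we also need $\tau_0=0$. The formula $\tau_0=\frac47\delta(2a+1)$ was derived for diagonal $\textbf{B}$, so for general $\textbf{B}$ we recompute. From $d\textbf{E}=\textbf{B}\textbf{A}^-\boldsymbol{\sigma}^-$ and $\sigma_i^+\w\sigma_j^-=0$, only the term $E^{567}$ could contribute to $d\varphi\w\varphi$. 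Its differential $dE^{567}$ is a sum of terms $\sigma^-\w E^{jk}$, and wedging these with $\varphi$ gives $\sigma^-\w\sigma^+\w E^{567}=0$. Hence $\tau_0=\frac17\star(d\varphi\w\varphi)=0$.

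\textbf{Bianchi identity.} The heterotic Bianchi identity needs $dT_\varphi$ for general $\textbf{B}$. Since $\tau_0=\tau_1=0$, we have $T_\varphi=-\star d\varphi$. Here $d\varphi=d(E^{567})$ is a combination of $\sigma^-_i\w E^{jk}$, whose Hodge dual is a combination of $E^l\w\sigma^-_m$, again with structure constants $\epsilon$ rotated by $\textbf{B}$. Differentiating, only $dE^l\w\sigma^-_m$ survives, and it lies in the $e^{1234}$ direction via $\sigma^-_i\w\sigma^-_j=-2\delta_{ij}e^{1234}$. Because $\textbf{B}$ is orthogonal, the rotation cancels and we recover \eqref{equ dT delta0}:
\[
dT_\varphi=-2(\epsilon_1^2+\epsilon_2^2+\epsilon_3^2)e^{1234}.
\]
This is the step I expect to be the main obstacle, namely keeping track of signs and the $\textbf{B}$-dependence. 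Alternatively, one can note that $T_\varphi$ depends only on the metric and $d$, through $\textbf{B}\textbf{A}^-$ combined with $\textbf{B}^T$.

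On the gauge side, $F_A=\sum Y_k\,de^k=\sum_k \epsilon_k Y_k\sigma^-_k$, with the $Y_k$ indexed as the $e^k$. Any pairing is $\mathrm{ad}$-invariant, so with the diagonal pairing \eqref{eq:pairing} we get \eqref{equ fafa delta0}.

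\textbf{Choosing the pairing.} Assume $\mathfrak n\neq\R^7$, so that some $\epsilon_i\neq0$. Say $\epsilon_1\neq0$, after relabelling within our diagonal convention. We choose signs $a_{ii}\in\{\pm1\}$ realising the required signature with $a_{11}=1$, and with the coefficients on the remaining $\epsilon_j^2$ adjusted so that $S:=\sum a_{ii}\epsilon_i^2>0$. This is possible by taking $|a_{11}|$ large, since only signs matter. We then set
\[
r=(\epsilon_1^2+\epsilon_2^2+\epsilon_3^2)/S>0.
\]
This gives the positive-definite signature and the signatures $(2,1)$ and $(1,2)$. For $(1,2)$ we use $a_{11}$ large positive and the other two negative, so that $S>0$ still holds.

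Finally, the left-invariant data descend to $\Gamma\backslash N$, which completes the proof.
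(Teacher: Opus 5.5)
Your proposal is correct and follows essentially the same route as the paper: with $\textbf{A}^+=0$, Proposition \ref{pro:dAA} forces $\textbf{C}=0$ and co-closedness is immediate. The paper then matches $dT_\varphi=-2(\epsilon_1^2+\epsilon_2^2+\epsilon_3^2)e^{1234}$ against $\langle F_A\wedge F_A\rangle=-2r\sum_i a_{ii}\epsilon_i^2\,e^{1234}$ and solves for $r$, exactly as you do.

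Your explicit check that $\tau_0=0$ and that $dT_\varphi$ depends on $\textbf{B}$ only through $(\textbf{B}\textbf{A}^-)^T\textbf{B}\textbf{A}^-=(\textbf{A}^-)^T\textbf{A}^-$ is a useful addition. The paper only computes the torsion for $\textbf{B}=\diag(1,a,a)$, although the statement is claimed for every $\textbf{B}\in\mathrm{SO}(3)$. Two minor points: for your ``large $a_{11}$'' trick the $a_{ii}$ must be arbitrary nonzero reals, not $\pm1$; and for $\mn\cong\R^7$ any pairing works trivially, so you do not need to exclude that case.
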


From Remark \ref{rem:isoclass}, we see that each of the Lie algebras corresponding to $\delta=0$ admits a basis with rational structure constants. It follows that each of the associated simply connected nilpotent Lie group $N$ admits a co-compact lattice $\Gamma$ \cite{MAL}. Thus, the left invariant solutions provided by Proposition \ref{pro:del0} on $N$ descend to the quotient $M=\Gamma \backslash N$, and we have: 
\begin{cor} \label{cor:del0} Let $M=\Gamma\backslash N$  be a nilmanifold whose Lie algebra $\mn$ is isomorphic to one of the following:
\[
\R^2\oplus\mh_5,\quad \R \oplus \mh_3^\C, \quad \mh_{\H}.
\]
Then $M$ admits a purely co-closed $\mathrm{G}_2$-structure $\varphi$ and an abelian connection $A$ such that $(\varphi,A)$ is a solution to the heterotic $\G_2$-system \eqref{equ: heterotic system 1}-\eqref{equ: heterotic system 3}.
\end{cor}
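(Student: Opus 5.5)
The plan is to reduce the corollary to Proposition \ref{pro:del0} together with Malcev's criterion, using the $\delta=0$ part of Remark \ref{rem:isoclass}. Everything in that proposition is left-invariant, so it only has to be transferred from $N$ to the quotient $M=\Gamma\backslash N$.

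\textbf{Step 1: realise each Lie algebra in the required form.} For each $\mn\in\{\R^2\oplus\mh_5,\ \R\oplus\mh_3^\C,\ \mh_\H\}$ I will exhibit a coframe $\{e^i\}$ with $de^i=0$ for $i\le 4$ and $d\mathbf{e}=\diag(\epsilon_1,\epsilon_2,\epsilon_3)(\sigma^-_1,\sigma^-_2,\sigma^-_3)^T$, i.e.\ structure equations \eqref{eq:strc} with $\delta=0$. By Remark \ref{rem:isoclass}, one, two or three nonzero $\epsilon_i$ give these three algebras respectively.

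For example, for $\mh_\H$ take $\epsilon_1=\epsilon_2=\epsilon_3=1$. This gives $de^5=e^{13}+e^{24}$, $de^6=e^{14}-e^{23}$, $de^7=e^{12}-e^{34}$, which is isomorphic to Salamon's form $(0,0,0,0,12-34,13+24,14-23)$ after permuting and relabelling the $e^i$. For $\R\oplus\mh_3^\C$ use $\epsilon_3=0$, and for $\R^2\oplus\mh_5$ keep only $\epsilon_1\ne0$. In every case at least one $\epsilon_i$ is nonzero, which excludes the flat case $\R^7$.

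\textbf{Step 2: apply Proposition \ref{pro:del0}.} Take $\textbf{B}\in\SO(3)$ arbitrary, say $\textbf{B}=\mathrm{Id}$, and $\textbf{C}=0$, i.e.\ $G=\mathbb{T}^3$. Proposition \ref{pro:dAA} gives that $A$ is a $\G_2$-instanton, and Proposition \ref{prop: instanton implies coclosed} gives $d\psi=0$. Since $\tau_0=\frac47\delta(2a+1)=0$, the structure is purely co-closed.

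For \eqref{equ: heterotic system 3}, compare \eqref{equ dT delta0} with \eqref{equ fafa delta0}. Choosing for instance all $a_{ii}=1$ and $r=1$ gives a positive definite pairing, and the identity holds because $\sum\epsilon_i^2\neq0$.

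\textbf{Step 3: descend to the nilmanifold.} The structure constants are rational in the chosen basis, so Malcev's theorem \cite{MAL} provides a co-compact lattice $\Gamma\subset N$. Left-invariant forms and the connection $A$ on the trivial bundle are invariant under left translation by $\Gamma$. The identities $d\psi=0$, $F_A\wedge\psi=0$ and $dT_\varphi=\langle F_A\wedge F_A\rangle$ are local, so they pass to $\Gamma\backslash N$, with the trivial $\mathbb{T}^3$-bundle descending as well.

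\textbf{Main obstacle.} The only genuine check is the isomorphism in Step 1, in particular matching $\R\oplus\mh_3^\C$ and $\mh_5$ with the $\sigma^-$ normal forms. I would handle this by a direct change of basis among $e^1,\dots,e^4$. The sign conventions for $\sigma^-_3$ versus $12-34$ only require reordering $e^3,e^4$, which preserves rationality.
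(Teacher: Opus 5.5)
Your proposal is correct and follows the paper's argument: the paper likewise takes the left-invariant abelian solution of Proposition \ref{pro:del0} on the $\delta=0$ algebras of Remark \ref{rem:isoclass}, and descends it to $\Gamma\backslash N$ using rational structure constants and Malcev's theorem. Your explicit realisation of each Lie algebra via the vanishing pattern of the $\epsilon_i$, and your concrete positive-definite choice $r=a_{ii}=1$, simply spell out details the paper leaves implicit.
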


\begin{remark}
The fact that negative definite signatures cannot occur for the solutions in Proposition \ref{pro:del0} follows more generally by \cite{Lotay2024}*{Theorem 3.9}. More precisely, since $g_{\varphi}$ is a left-invariant metric on a nilpotent Lie algebra, it has non-negative scalar curvature \cite{Mil76}*{Theorem 3.1}. As we also have $\tau_1=0$, from \cite{Lotay2024}*{(3.18)} it follows that in this case $|F_A|^2_{\mathfrak{g}}$ has to be non-negative in order to solve \eqref{equ: heterotic system 3}. 
\end{remark}
\noindent \textbf{The $\delta \neq 0$ case.} In this case, we recall that $\textbf{C}=-2\delta \cdot \diag(1,a,a)$ with $a=+1$ or $-1$ corresponding to ${G}=\mathrm{SU}(2)$ or $\mathrm{SL}(2,\R)$, respectively. Consider the diagonal pairing on $\mathfrak{g}$ given by 
\begin{equation*}
    \langle\cdot,\cdot \rangle_{\mathfrak{g}}=\alpha\cdot\diag(\gamma,1,1),
\end{equation*}
with respect to the basis $\{Y_5,Y_6,Y_7\}$, where $\gamma\in\{\pm 1\}$ and $\alpha\in \R^*$. This corresponds to a bi-invariant metric on $G$ precisely when $\gamma=a$. Using this pairing, a long but straightforward computation gives:
\begin{align} \label{eq:FAFA}
\begin{split}
    \langle F_A\wedge F_A\rangle_{\mathfrak{g}}=& \ -4\delta\alpha\left[\gamma(\delta\sigma_1^++\epsilon_1\sigma^-_1)\wedge e^{67}+a(\delta\sigma_2^++\epsilon_2\sigma^-_2)\wedge e^{75}\right.\\ 
    &\left.+a(\delta\sigma_3^++\epsilon_3\sigma^-_3)\wedge e^{56}\right] +2\alpha (\delta^2(\gamma+2)-\gamma\epsilon_1^2-\epsilon^2_2-\epsilon^2_3)e^{1234} .
    \end{split}
\end{align}
Comparing with \eqref{eq:dT} we have
 \begin{align}
    \d T_{\varphi} -\langle F_A\wedge F_A\rangle_{\mathfrak{g}} = & \  2\left[\frac{\delta^2}3\left( 8a+1-3\alpha(\gamma+2)\right)-\epsilon_1^2(1-\gamma\alpha)-(\epsilon^2_2+\epsilon^2_3)(1-\alpha) \right]e^{1234}\nonumber\\
    & -\frac4{3}\delta\left\{
((2a+1-3\alpha\gamma)\left((\delta+\epsilon_1)e^{1367} +(-\delta+\epsilon_1)e^{2467} \right)\right.\label{eq:Bans}\\
&+\left.((2a+1-3\alpha a)\left[(-\delta+\epsilon_2)e^{1475}+(-\delta-\epsilon_2)e^{2375}\right.\right.\nonumber\\
&\left.\left.+(\delta+\epsilon_3)e^{1256}+(\delta-\epsilon_3)e^{3456}\right]\right\}.\nonumber
\end{align}

Setting
\begin{equation}
    \gamma=a\qquad\text{and} \qquad \alpha=\frac{2a+1}{3a}
\end{equation}
in \eqref{eq:Bans}, the above simplifies to:
\begin{equation}\label{eq:Bans simplified}
    \d T_{\varphi} -\langle F_A\wedge F_A\rangle_{\mathfrak{g}} =  \frac{2(a-1)}{3}  \left[(6+2a)\delta^2+2\epsilon_1^2-a(\epsilon_2^2+\epsilon_3^2) \right]e^{1234}.
\end{equation}
It is not hard to see that $a=-1$ does not solve the latter. The only solutions occur when $a=\gamma = \alpha =1$ and $\epsilon_i \in \R$ are arbitrary; hence $G=\mathrm{SU}(2)$ in this case. We can summarise the above results into:
\begin{pro}\label{pro:delneq0}
Let $\mn$ be a nilpotent Lie algebra with structure coefficients given by \eqref{eq:strc} with $\delta\neq 0$. Consider on $\mn$ the $\mathrm{G}_2$-structure $\varphi$ defined by \eqref{eq:phians} with $\textbf{B}=\mathrm{Id}$ together with the connection $A$ defined by \eqref{eq:A} with gauge group $G=\mathrm{SU}(2)$ and $\textbf{C}=-2\delta \mathrm{Id}$. Then $\varphi$ defines a co-closed $\mathrm{G}_2$-structure (with $\tau_0 \neq 0$), $A$ is a $\mathrm{G}_2$-instanton with respect to $\varphi$, and $(\varphi,A)$ solves the heterotic Bianchi identity \eqref{equ: heterotic system 3} for the $\mathrm{ad}$-invariant pairing $\bil_{\mathfrak{g}}=-(8\delta^2)^{-1}\kappa$, where $\kappa$ denotes the Killing form.
\end{pro}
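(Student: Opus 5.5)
The plan is to check the three equations \eqref{equ: heterotic system 1}--\eqref{equ: heterotic system 3} one at a time, using what has already been established for the ansatz \eqref{eq:phians}, \eqref{eq:A} under the structure equations \eqref{eq:strc}.

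\emph{Instanton condition.} With $\textbf{A}^+=\delta\,\mathrm{Id}$ and $\textbf{B}=\mathrm{Id}$, the criterion of Proposition \ref{pro:dAA} reads $\textbf{C}=-2\textbf{A}^+\textbf{B}=-2\delta\,\mathrm{Id}$. This is exactly the choice in the statement, so $A$ is a $\mathrm{G}_2$-instanton. Since $\delta\neq0$, all diagonal entries of $\textbf{C}$ are non-zero and have the same sign, so $\mathfrak g\cong\mathfrak{su}(2)$ and $G=\SU(2)$.

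\emph{Integrability and torsion.} Proposition \ref{prop: instanton implies coclosed} then gives $\d\psi=0$, so $\varphi$ is co-closed and \eqref{equ: heterotic system 1} holds with $\tau_1=0$. With $a=1$, the formula $\tau_0=\frac47\delta(2a+1)$ gives $\tau_0=\frac{12}{7}\delta\neq0$. I would briefly re-derive this value from $\d\varphi$ computed with \eqref{eq: str nil general}, to check that it is consistent with the displayed expression for $T_\varphi$.

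\emph{Heterotic Bianchi identity.} This is the main step. I take the pairing $\alpha\,\diag(\gamma,1,1)$ with $a=\gamma=1$ and $\alpha=\frac{2a+1}{3a}=1$, that is, the identity matrix in the basis $\{Y_5,Y_6,Y_7\}$. Substituting these values into \eqref{eq:Bans}, every coefficient vanishes:
\begin{itemize}
\item the coefficients $2a+1-3\alpha\gamma$ and $2a+1-3\alpha a$ are both $0$;
\item the $e^{1234}$ coefficient is $\frac{\delta^2}{3}(9-9)-\epsilon_1^2(1-1)-(\epsilon_2^2+\epsilon_3^2)(1-1)=0$.
\end{itemize}
Equivalently, the prefactor $a-1$ in \eqref{eq:Bans simplified} vanishes. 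Hence $\d T_\varphi=\langle F_A\wedge F_A\rangle_{\mathfrak g}$ for all $\epsilon_i$.

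The step I regard as the real obstacle is the correctness of \eqref{eq:dT} and \eqref{eq:FAFA}. I would spot-check one component of each, say the $e^{1367}$ and $e^{1234}$ terms. For $F_A$ I would use \eqref{eq: FA}, which gives
\[
F_A=\sum_k Y_k\big(\delta\sigma_k^++\epsilon_k\sigma_k^- -2\delta\,(\textbf{e}^2)_k\big).
\]
Note that the paper writes $\diag(\epsilon_1,\epsilon_3,\epsilon_3)$ in \eqref{eq:strc}, which I read as $\diag(\epsilon_1,\epsilon_2,\epsilon_3)$. Expanding $\langle F_A\wedge F_A\rangle=\sum_k F^k\wedge F^k$ and using $\sigma_i^\pm\wedge\sigma_j^\pm=\pm2\delta_{ij}e^{1234}$ and $\sigma^+_i\wedge\sigma^-_j=0$ reproduces \eqref{eq:FAFA}.

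\emph{Identifying the pairing.} Finally, I identify the identity pairing with a multiple of the Killing form. Since $[Y_6,Y_7]=-2\delta Y_5$ (and cyclically), the matrix $\mathrm{ad}_{Y_5}$ has off-diagonal entries $\pm2\delta$. Therefore
\[
\kappa(Y_5,Y_5)=\mathrm{tr}(\mathrm{ad}_{Y_5}^2)=-8\delta^2,
\]
and by symmetry $\kappa=-8\delta^2\,\mathrm{Id}$. Thus the identity pairing equals $-(8\delta^2)^{-1}\kappa$, which is $\mathrm{ad}$-invariant and positive definite, as claimed.
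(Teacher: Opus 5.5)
Your proposal is correct and follows the paper's own argument: the instanton condition comes from Proposition \ref{pro:dAA}, co-closedness from Proposition \ref{prop: instanton implies coclosed} with $\tau_0=\tfrac{12}{7}\delta\neq0$, and the Bianchi identity from specialising \eqref{eq:Bans} to $a=\gamma=\alpha=1$, where every coefficient vanishes. Your identification of the identity pairing with $-(8\delta^2)^{-1}\kappa$ via $\mathrm{tr}(\mathrm{ad}_{Y_5}^2)=-8\delta^2$ is also correct, as is your reading of $\textbf{A}^-$ as $\diag(\epsilon_1,\epsilon_2,\epsilon_3)$.
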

As before, using Remark \ref{rem:isoclass} one can check that the Lie algebras in the ansatz corresponding to $\delta\neq 0$ admit rational bases and thus co-compact lattices. This together with Proposition \ref{pro:delneq0} gives:
\begin{cor} \label{cor:delneq0} Let $M=\Gamma\backslash N$ be a nilmanifold whose Lie algebra $\mn$ is isomorphic to one of the following:
\[\R\oplus\mn_{3,2}, \quad
     \mn_{7,3,A}, \quad
     \mn_{7,3,B_1},\quad
     \mn_{7,3,C},\quad
\mh_\H.
\]
Then $M$ admits a co-closed $\mathrm{G}_2$-structure $\varphi$ and a connection $A$ with gauge group $\SU(2)$ such that $(\varphi,A)$ is a solution to the heterotic $\G_2$-system \eqref{equ: heterotic system 1}-\eqref{equ: heterotic system 3}.
\end{cor}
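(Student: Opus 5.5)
The plan is to deduce Corollary \ref{cor:delneq0} from Proposition \ref{pro:delneq0} in three steps. First, match each listed Lie algebra to a choice of parameters in \eqref{eq:strc}. Second, show each corresponding simply connected group admits a co-compact lattice. Third, check that the left-invariant solution descends to the quotient.

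\textbf{Step 1: parameters.} For each of $\R\oplus\mn_{3,2}$, $\mn_{7,3,A}$, $\mn_{7,3,B_1}$, $\mn_{7,3,C}$ and $\mh_\H$, Remark \ref{rem:isoclass} (via Gong's classification) gives $\delta\neq 0$ and some $\epsilon_1,\epsilon_2,\epsilon_3$ for which \eqref{eq:str nil general} with \eqref{eq:strc} defines that algebra. Taking $\delta=1$, I would record the choices explicitly, so that the basis $\{e^i\}$ realising \eqref{eq:strc} is visible.
\begin{itemize}
\item $\epsilon_1=\epsilon_2=\epsilon_3=0$ gives $de^{4+i}=\sigma^+_i$, which is isomorphic to $\mh_\H$.
\item $\epsilon_1=\epsilon_2=\epsilon_3=1$ gives $de^5=2e^{13}$, $de^6=-2e^{23}$, $de^7=2e^{12}$. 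This is $\R\oplus\mn_{3,2}$, with $e^4$ central.
\end{itemize}
The mixed choices (for instance, one or two of the $\epsilon_i$ equal to $\pm\delta$) should produce the remaining three algebras. Since the corollary only asserts existence, it suffices to exhibit one parameter choice for each isomorphism class.

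\textbf{Step 2: lattices.} With $\delta$ and the $\epsilon_i$ rational, the structure constants are rational in the basis $\{e_i\}$. Malcev's criterion \cite{MAL} then gives a co-compact lattice $\Gamma\subset N$. The isomorphism to Gong's normal form is rational as well, so the rational structure is unaffected.

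\textbf{Step 3: apply Proposition \ref{pro:delneq0} and descend.} On $N$, take the left-invariant $\varphi$ from \eqref{eq:phians} with $\textbf{B}=\mathrm{Id}$, and the connection $A$ from \eqref{eq:A} on the trivial bundle $N\times\SU(2)$ with $\textbf{C}=-2\delta\,\mathrm{Id}$, using the pairing $-(8\delta^2)^{-1}\kappa$. Proposition \ref{pro:delneq0} gives \eqref{equ: heterotic system 2} and \eqref{equ: heterotic system 3}. Proposition \ref{prop: instanton implies coclosed} gives co-closedness, hence \eqref{equ: heterotic system 1}. Every form involved is left-invariant: $\varphi$, $\psi$, $T_\varphi$, and the $\mathfrak{g}$-valued $1$-form $A$. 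So all of them are invariant under left multiplication by $\Gamma$ and descend to $M=\Gamma\backslash N$. There, $A$ becomes a connection on the trivial bundle $M\times\SU(2)$, and the equations, being local identities between invariant forms, continue to hold.

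The main obstacle is Step 1. One must confirm that each of the five algebras genuinely arises from \eqref{eq:strc} with $\delta\neq0$, and do so over $\mathbb{Q}$ so that Step 2 applies. I would first verify the two easy cases above. For the other three, I would compute invariants of the derived algebra, namely the rank of the $2$-forms $de^5,de^6,de^7$ and of their span, to compare with Gong's list, rather than relying solely on Remark \ref{rem:isoclass}.
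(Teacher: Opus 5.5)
Your proposal is the paper's argument. The paper also takes the parameter matching from Remark~\ref{rem:isoclass}, gets lattices from rational structure constants via Malcev, and lets the left-invariant solution of Proposition~\ref{pro:delneq0} descend to $\Gamma\backslash N$. One correction to your heuristic for the unchecked cases: choices with only one or two $\epsilon_i=\pm\delta$ can never give $\mn_{7,3,A}$. The reason is that the form $\alpha\mapsto\alpha\wedge\alpha$ on $\mathrm{span}\{de^5,de^6,de^7\}$ is $2\,\diag(\delta^2-\epsilon_1^2,\delta^2-\epsilon_2^2,\delta^2-\epsilon_3^2)$ here, and it vanishes identically for $\mn_{7,3,A}$. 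So you need all three $|\epsilon_i|=|\delta|$ with an odd number of minus signs; for example $\delta=1$, $\epsilon=(1,1,-1)$ gives $de^5=2e^{13}$, $de^6=-2e^{23}$, $de^7=2e^{34}$, all divisible by $e^3$.
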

\begin{remark}\textbf{Relation to the characteristic connection.}\label{rem: relation to characteristic connection} \\
In \cite{CdBM}*{Theorem 1.1}, it was shown that the only $2$-step nilpotent Lie algebras with $3$-dimensional commutator possessing a co-closed  $\G_2$-structure for which the characteristic connection $\nabla^c$ is a $\G_2$-instanton are: 
\[
\R\oplus\mn_{3,2}
\quad\text{and}\quad \mh_\H.
\]
It turns out that in both of these cases the connection $A$ of Proposition \ref{pro:del0} induces the characteristic connection of the underlying co-closed  $\G_2$-structure via an embedding $A\in \Om^1(\mathfrak{su}(2))\hookrightarrow \Omega^1(\mathfrak{g}_2)$ as a $\G_2$-connection on the tangent bundle on $M$.

More explicitly, adapted to the $\G_2$ co-framing $\{e^1,...,e^7\}$ as above, the characteristic connection $\nabla^c$ on $\R\oplus\mn_{3,2}$ is given by:
{\small\[
2
\left(
\begin{array}{c|c|c}
0& 0&0\\
\hline
0& \begin{array}{ccc} 
 0&e^6&e^5\\
 -e^6&0&-e^7\\
 -e^5&e^7&0
\end{array}
&0\\
\hline
0&0&\begin{array}{ccc}
     0&e^7&-e^6\\
    -e^7&0&e^5\\
    e^6&-e^5&0
\end{array}
\end{array}
\right)
\]}
and the characteristic connection $\nabla^c$ on $\mh_\H$ is given by:
\small\[
\left(
\begin{array}{c|c}
\begin{array}{cccc}
   0&-e^7&-e^5&e^6\\
    e^7&0&e^6&e^5\\
    e^5&-e^6&0&-e^7\\
    -e^6&-e^5&e^7&0\\
\end{array}&0\\
\hline
0&\begin{array}{ccc}
 0&2e^7&-2e^6\\
 -2e^7&0&2e^5\\
 2e^6&-2e^5&0
\end{array}
\end{array}
\right)
.\]
In particular, the results in \cite{CdBM} also imply that the connection $A$ in Proposition \ref{pro:del0} {\em is not} the characteristic connection for the nilpotent Lie algebras $\mn\ncong  \R\oplus\mn_{3,2},\mh_\H$. In fact, \cite{CdBM}*{Theorem 1.1} classifies all the $2$-step nilpotent Lie algebras which admit co-closed $\G_2$-structures with $\nabla^c$ a $\G_2$-instanton, see Example \ref{ex: more characteristic connection} below for the remaining cases which instead have $\mathfrak{hol}(\nabla^c)\cong \R$ (in contrast to the above examples which have $\mathfrak{hol}(\nabla^c)\cong \mathfrak{su}(2)$).
\end{remark}
If instead we now set $a=-1$ in \eqref{eq:Bans simplified} then we have:
\begin{equation}
    \d T_{\varphi} -\langle F_A\wedge F_A\rangle_{\mathfrak{g}} =  -\frac{4}{3}  \left[4\delta^2+2\epsilon_1^2+\epsilon_2^2+\epsilon_3^2 \right]e^{1234}.\label{eq:Bans a=-1}
\end{equation}
In this case, $a=\gamma=-1$ and $\alpha=\frac{1}{3}$. Hence $\textbf{B}=\diag(1,-1,-1)$ and from Proposition \ref{pro:dAA} we have $\textbf{C}=2\delta\diag(-1,1,1)$. Thus, this corresponds to a distinct co-closed $\G_2$-structure than in Proposition \ref{pro:del0} (which is nonetheless isometric), the gauge group of the $\mathrm{G}_2$-instanton $A$ is now $G=\mathrm{SL(2,\R)}$ and $\langle\cdot,\cdot\rangle_{\mathfrak{g}}$ is again a multiple of the Killing form. 

We shall now supplement a second connection to correct for the heterotic Bianchi identity in \eqref{eq:Bans a=-1}. Consider a principal $\mathrm{U}(1)$-bundle over the nilpotent Lie group $M$ endowed with a connection $1$-form $\xi\otimes Y_0$, where $Y_0$ denotes a generator of $\mathfrak{u}(1)\cong\R$, and with curvature form $d\xi = \sigma_1^-$. If $\Gamma$ denotes a co-compact lattice in $M$ (which exists by \cite{MAL}), we can view $M/\Gamma$ as a $\mathbb{T}^3$-bundle over $\mathbb{T}^4$ with $[\sigma_1^-]\in H^2(\mathbb{T}^4,\mathbb{Z})$, see \cite{MRV}. Thus, this principal $\mathrm{U}(1)$-bundle is pullbacked from the base. It is clear that $\sigma_1^-\w \psi=0$ i.e. $\xi$ is a $\mathrm{G}_2$-instanton, and $\sigma_1^-\w\sigma_1^-=-2e^{1234}$. Hence we can define the connection $\hat{A}:=(\xi\otimes Y_0)\oplus A $ on a principal $\mathrm{U}(1)\times\mathrm{SL}(2,\R)$-bundle over $M$ with the $\mathrm{ad}$-invariant pairing on $\mathfrak{u}(1)$ given by $|Y_0|^2=\frac23(4\delta^2+2\epsilon_1^2+\epsilon_2^2+\epsilon^2_3)$
so that \eqref{eq:Bans a=-1} becomes:
\begin{eqnarray}
    d T_{\varphi} =\langle F_{\hat{A}}\wedge F_{\hat{A}}\rangle, 
\end{eqnarray}
i.e. this yields another solution to \eqref{equ: heterotic system 1}-\eqref{equ: heterotic system 3} distinct from Proposition \ref{pro:del0}. We summarise the above into:
\begin{pro}\label{pro:delneq0A4}
Let $\mn$ be a nilpotent Lie algebra with structure coefficients given by \eqref{eq:strc} with $\delta\neq 0$. Consider on $\mn$ the $\mathrm{G}_2$-structure $\varphi$ defined by \eqref{eq:phians} with $\textbf{B}=\diag(+1,-1,-1)$ together with the connection $A$ defined by \eqref{eq:A} with gauge group $G=\mathrm{SL}(2,\R)$ and $\textbf{C}=2\delta \diag(-1,+1,+1)$. Then $\varphi$ defines a co-closed $\mathrm{G}_2$-structure (with $\tau_0 \neq 0$) and $A$ is a $\mathrm{G}_2$-instanton with respect to $\varphi$. Additionally, there exists another $\mathrm{G}_2$-instanton $\xi$ on a principal $\mathrm{U}(1)$-bundle, so that the product connection $\hat{A}$ on the $\mathrm{U}(1)\times \mathrm{SL}(2,\R)$-bundle solves the heterotic Bianchi identity \eqref{equ: heterotic system 3} for a suitable $\mathrm{ad}$-invariant pairing on $\mathfrak{u}(1)\oplus\mathfrak{sl}(2,\R)$.
\end{pro}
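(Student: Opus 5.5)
The argument reduces to the earlier propositions plus one extra computation for the $\mathrm{U}(1)$ factor. I would proceed in three steps.

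\textbf{Step 1 (instanton and co-closedness).} With $\textbf{A}^+=\delta\,\mathrm{Id}$ and $\textbf{B}=\diag(1,-1,-1)$, the product $-2\textbf{A}^+\textbf{B}$ equals $2\delta\,\diag(-1,1,1)=\textbf{C}$. Hence Proposition \ref{pro:dAA} shows that $A$ is a $\G_2$-instanton. Since $\delta\neq0$ and the diagonal entries of $\textbf{C}$ do not all share a sign, Milnor's description gives $\mg\cong\sl(2,\R)$. Proposition \ref{prop: instanton implies coclosed} then yields $\d\psi=0$, so \eqref{equ: heterotic system 1} holds with $\tau_1=0$. From $\tau_0=\frac47\delta(2a+1)$ with $a=-1$ we get $\tau_0=-\frac47\delta\neq0$.

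\textbf{Step 2 (the $\mathrm{SL}(2,\R)$ part of the Bianchi identity).} I take the pairing $\alpha\,\diag(\gamma,1,1)$ with $\gamma=a=-1$ and $\alpha=\frac{2a+1}{3a}=\frac13$. This is a multiple of $\diag(\lambda_6\lambda_7,\lambda_5\lambda_7,\lambda_5\lambda_6)\propto\diag(1,-1,-1)$, so it is $\mathrm{ad}$-invariant. I then specialise \eqref{eq:Bans}. Because $2a+1-3\alpha\gamma=-1+1=0$ and $2a+1-3\alpha a=0$, all the mixed terms cancel. What remains is \eqref{eq:Bans a=-1}:
\[
\d T_\varphi-\langle F_A\wedge F_A\rangle_{\mg}=-\tfrac43 K e^{1234},\qquad K:=4\delta^2+2\epsilon_1^2+\epsilon_2^2+\epsilon_3^2>0.
\]
The main obstacle is checking the coefficient of $e^{1234}$ in \eqref{eq:Bans} at these values. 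The bracket evaluates to $\frac{\delta^2}{3}(-7-1)-\frac43\epsilon_1^2-\frac23(\epsilon_2^2+\epsilon_3^2)$, and after the overall factor $2$ this gives $-\frac43K$. This step needs care, since it relies on the long computation \eqref{eq:FAFA}.

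\textbf{Step 3 (the $\mathrm{U}(1)$ correction).} Let $\xi$ be a $\mathrm{U}(1)$-connection with $\d\xi=\sigma_1^-$. On the nilmanifold it exists because $[\sigma_1^-]$ is integral on the $\mathbb{T}^4$ base, after rescaling the lattice if necessary. Since $\sigma_1^-$ is anti-self-dual on $\langle e_1,\dots,e_4\rangle$, we have $\sigma_1^-\wedge e^{1234}=0$ and $\sigma_1^-\wedge\sigma_i^+=0$. Hence $\sigma_1^-\wedge\psi=0$ by \eqref{eq:psians}, and $\xi$ is an instanton. Moreover $\sigma_1^-\wedge\sigma_1^-=2e^{1324}=-2e^{1234}$.

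For $\hat A=(\xi\otimes Y_0)\oplus A$ I use the orthogonal sum pairing on $\mathfrak u(1)\oplus\sl(2,\R)$, which is $\mathrm{ad}$-invariant. It gives
\[
\langle F_{\hat A}\wedge F_{\hat A}\rangle=-2|Y_0|^2e^{1234}+\langle F_A\wedge F_A\rangle_{\mg}.
\]
Choosing $|Y_0|^2=\frac23K>0$ then gives $\d T_\varphi=\langle F_{\hat A}\wedge F_{\hat A}\rangle$, which completes the proof.
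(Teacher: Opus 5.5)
Your proposal is correct and is essentially the paper's own argument. It uses the instanton property via Proposition~\ref{pro:dAA} (with $-2\textbf{A}^+\textbf{B}=2\delta\,\diag(-1,1,1)$), co-closedness via Proposition~\ref{prop: instanton implies coclosed} with $\tau_0=-\frac47\delta$, the choice $\gamma=a=-1$, $\alpha=\frac13$ in \eqref{eq:Bans} to cancel the mixed terms and leave \eqref{eq:Bans a=-1}, and finally the $\mathrm{U}(1)$-instanton with $\d\xi=\sigma_1^-$ and $|Y_0|^2=\frac23(4\delta^2+2\epsilon_1^2+\epsilon_2^2+\epsilon_3^2)$. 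Your arithmetic for the $e^{1234}$ coefficient and for $\sigma_1^-\wedge\sigma_1^-=-2e^{1234}$ is also correct.
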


 The following result follows from Remark \ref{rem:isoclass} and Proposition \ref{pro:delneq0A4}.
\begin{cor} \label{cor:delneq0A4} Let $M=\Gamma\backslash N$ be a nilmanifold whose Lie algebra $\mn$ is isomorphic to one of the following:
\[\R\oplus\mn_{3,2}, \quad
     \mn_{7,3,A}, \quad
     \mn_{7,3,B_1},\quad
     \mn_{7,3,C},\quad
\mh_\H.
\]
Then $M$ admits a co-closed $\mathrm{G}_2$-structure $\varphi$ and a connection $\hat{A}$ with gauge group $\mathrm{U}(1)\times \mathrm{SL}(2,\R)$ such that $(\varphi,\hat{A})$ is a solution to the heterotic $\G_2$-system \eqref{equ: heterotic system 1}-\eqref{equ: heterotic system 3}.
\end{cor}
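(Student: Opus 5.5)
The plan is to obtain the corollary by descending the left-invariant solution of Proposition \ref{pro:delneq0A4} to a compact quotient, mirroring the arguments for Corollaries \ref{cor:del0} and \ref{cor:delneq0}.

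\emph{Step 1: the Lie algebras are in the ansatz with rational constants.} By Remark \ref{rem:isoclass}, each listed algebra $\R\oplus\mn_{3,2}$, $\mn_{7,3,A}$, $\mn_{7,3,B_1}$, $\mn_{7,3,C}$, $\mh_\H$ arises from the structure equations \eqref{eq:strc} with $\delta\neq0$. I would fix a basis adapted to \eqref{eq:strc}, taking, for instance, $\delta=1$ and the $\epsilon_i\in\{0,\pm1\}$ or rational values realising the given isomorphism class. In such a basis all structure constants are rational, so Malcev's criterion \cite{MAL} gives a co-compact lattice $\Gamma\subset N$.

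\emph{Step 2: the $\G_2$-instanton part descends.} With $\textbf{B}=\diag(1,-1,-1)$, Proposition \ref{pro:delneq0A4} (via Propositions \ref{pro:dAA} and \ref{prop: instanton implies coclosed}) gives a left-invariant co-closed $\varphi$ and a left-invariant $\mathfrak{sl}(2,\R)$-valued connection $A$ on the trivial bundle. Both are invariant under left translations, so they pass to $\Gamma\backslash N$. Equations \eqref{equ: heterotic system 1} and \eqref{equ: heterotic system 2} are pointwise identities, so they continue to hold on the quotient.

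\emph{Step 3: the $\U(1)$ factor, which is the main obstacle.} The $\U(1)$ connection $\xi$ must define a genuine principal bundle on the compact quotient. We need $d\xi=\sigma_1^-$ with $\xi$ a $1$-form on a $\U(1)$-bundle over $\Gamma\backslash N$, and this requires an integral cohomology class. My approach is to view $\Gamma\backslash N$ as a $\mathbb{T}^3$-bundle over $\mathbb{T}^4=\Gamma'\backslash\R^4$, where $e^1,\dots,e^4$ are pulled back. I would choose the lattice so that its projection to $\R^4$ is $\mathbb{Z}^4$ in the rational basis, rescaling if necessary. Then $\sigma_1^-=e^{13}+e^{24}$ has integral periods on $\mathbb{T}^4$ (after a further rational rescaling, if needed). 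By Chern--Weil, it is then the curvature of a $\U(1)$-bundle on $\mathbb{T}^4$, and we pull this bundle back, as in \cite{MRV}.

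If instead one only insists on $\sigma_1^-$ being closed, this is harmless. Any positive multiple $c\,\sigma_1^-$ with $c\sigma_1^-$ integral still works, because the pairing $|Y_0|^2$ can be rescaled by $c^{-2}$. The instanton condition $\sigma_1^-\w\psi=0$ and the identity $\sigma_1^-\w\sigma_1^-=-2e^{1234}$ are pointwise, so choosing $|Y_0|^2=\tfrac{2}{3c^2}(4\delta^2+2\epsilon_1^2+\epsilon_2^2+\epsilon_3^2)$ cancels the defect in \eqref{eq:Bans a=-1}.

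\emph{Conclusion.} Forming $\hat A=(\xi\otimes Y_0)\oplus A$ then solves \eqref{equ: heterotic system 3} on $\Gamma\backslash N$ with the product pairing on $\mathfrak{u}(1)\oplus\mathfrak{sl}(2,\R)$. Together with Step 2, this completes the proof.
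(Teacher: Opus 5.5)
Your proposal is correct and follows essentially the same route as the paper. Rational structure constants give a lattice via Malcev, and the left-invariant co-closed $\varphi$ and $\mathrm{SL}(2,\R)$-instanton of Proposition \ref{pro:delneq0A4} descend to the quotient. The $\U(1)$-bundle with curvature $\sigma_1^-$ is pulled back from the base $\mathbb{T}^4$ using integrality of its class, as in \cite{MRV}. Your observation that one can replace $\sigma_1^-$ by an integral multiple $c\,\sigma_1^-$ and rescale $|Y_0|^2$ by $c^{-2}$ is a useful detail that the paper leaves implicit.
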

\begin{remark}
    Note that the $\G_2$-structures in Corollary \ref{cor:delneq0} and \ref{cor:delneq0A4} are distinct, but nonetheless induce the same metric and orientation on $M$. Thus, our results demonstrate a curious phenomenon that there exist distinct isometric solutions to the heterotic $\G_2$-system (with different connections). This parallels the observation in \cite{finofow}*{Example 6.3} that there exists distinct strong $\G_2$-structures with torsion inducing the same metric.
\end{remark}

\subsection{\texorpdfstring{$3$}{}-Sasakian case}
\label{sec:3sasaki}

In this section we use our $\mathrm{SO}(3)$ ansatz to construct new solutions to the heterotic $\mathrm{G}_2$-system on certain $3$-Sasakian $7$-manifolds.
Recall that on a $3$-Sasakian manifold $M^7$ the dual $1$-forms to the Reeb vector fields satisfy:
\begin{equation}
    d e^5 = 2(e^{67}+\omega_1^+), \quad 
    d e^6 = 2(e^{75}+\omega_2^+), \quad
    d e^7 = 2(e^{56}+\omega_3^+),\label{equ: structure eq 3sasaki}
\end{equation}
where locally one can write
$\om_1^+=e^{12}+e^{34}$, $\om_2^+=e^{13}+e^{42}$ and $\om_3^+=e^{14}+e^{23}$ by choosing a local transverse orthonormal co-frame $\{e^i\}_{i=1}^4$.  
Thus, one can define a nearly parallel $\mathrm{G}_2$-structure on $M$ by
\begin{equation}
    \varphi_{ts}:=e^{567}+e^{5}\wedge \omega_1^++e^{6}\wedge\omega_2^+ -e^{7}\wedge \omega_3^+.\label{eq: 3sasa g2 structure}
\end{equation}
Indeed using the structure equations \eqref{equ: structure eq 3sasaki}, one easily verifies the nearly parallel condition:
\begin{equation*}
    d\varphi_{ts} = 4 \star_{ts} \varphi_{ts}. 
\end{equation*}
The $3$-Sasakian condition is equivalent to the fact that the cone metric $dr^2+r^2g_{ts}$ on $\R^+_r\times M^7$ has holonomy group contained in $\mathrm{Sp}(2)$  i.e. it is hyperK\"ahler  \cite{Galicki1996}*{Proposition 2.1}. Thus, we shall refer to $\varphi_{ts}$ as the $3$-Sasakian $\G_2$-structure (`ts' short for 'three-Sasakian').

It was shown in \cite{Galicki1996}*{Proposition 2.4} that $M^7$ admits another distinct nearly parallel $\mathrm{G}_2$-structure given by:
\begin{equation}
    \varphi_{np}:=-\frac{27}{125}e^{567}+\frac{27}{25}\big(e^{5}\wedge \omega_1^++e^{6}\wedge\omega_2^+ +e^{7}\wedge \omega_3^+\big).\label{eq: np g2 structure}
\end{equation}
The latter is obtained by suitably squashing the metric on the leaf of the foliation generated by $\langle e_5,e_6,e_7\rangle.$ Again using \eqref{equ: structure eq 3sasaki}, one can verify that it is nearly parallel:
\begin{equation*}
    d\varphi_{np} = 4 \star_{np} \varphi_{np}.
\end{equation*}
Unlike in the previous case, however, the associated cone metric $dr^2+r^2g_{np}$ has holonomy group \textit{equal} to $\mathrm{Spin}(7)$, see \cite{FriedrichNP}*{Theorem 5.5};  $\varphi_{np}$ is said to be \textit{strictly} nearly parallel. It is not hard to see that $\varphi_{ts}$ and $\varphi_{np}$ are neither isometric nor induce the same orientation on $M$ ({$\vol_{np}=-\frac{3^7}{5^2}\vol_{ts}$}); though they both define Einstein metrics with positive scalar curvature equal to $42$ (owing to our normalisation).

It follows automatically from the nearly parallel condition that \eqref{equ: heterotic system 1} holds for $\varphi_{ts}$ and $\varphi_{np}$. 
Next we consider when the connection $A$, given by \eqref{eq:A}, satisfies \eqref{equ: heterotic system 2}.
\begin{pro}\label{pro: instanton sasaki}
For the $3$-Sasakian $\mathrm{G}_2$-structure $\varphi_{ts}$ \eqref{eq: 3sasa g2 structure}, the connection $A$ given by \eqref{eq:A} is a $\mathrm{G}_2$-instanton if the gauge group is $\mathrm{SL}(2,\mathbb{R})$ with $\textbf{C} = \diag(
-6, -6,+2)$. 
On the other hand, for the strictly nearly parallel $\mathrm{G}_2$-structure $\varphi_{np}$ \eqref{eq: np g2 structure}, the connection $A$ given by \eqref{eq:A} is a $\mathrm{G}_2$-instanton if the gauge group is $\mathrm{SU}(2)$ with $\textbf{C} = -\frac{6}{5}\mathrm{Id}$. 
 \end{pro}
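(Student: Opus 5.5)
The plan is to verify the instanton condition \eqref{equ: definition of instanton} directly, one $\mathfrak{g}$-component at a time. By \eqref{eq: FA} with $\textbf{C}=\diag(\lambda_5,\lambda_6,\lambda_7)$ and the structure equations \eqref{equ: structure eq 3sasaki}, the three components of the curvature are
\[
F^5=(2+\lambda_5)e^{67}+2\omega_1^+,\qquad F^6=(2+\lambda_6)e^{75}+2\omega_2^+,\qquad F^7=(2+\lambda_7)e^{56}+2\omega_3^+ .
\]
So in each case I only need an explicit expression for the $4$-form $\psi=\star\varphi$. The instanton condition then becomes three scalar equations, as in \eqref{eq: instanton condition 1}, which I solve for $\lambda_5,\lambda_6,\lambda_7$.

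For $\varphi_{ts}$, I first rewrite \eqref{eq: 3sasa g2 structure} in the form \eqref{eq:phians}, using that the $\omega_i^+$ are self-dual on $\langle e_1,\dots,e_4\rangle$ with $\omega_i^+\wedge\omega_j^+=2\delta_{ij}e^{1234}$. The minus sign in front of $e^7\wedge\omega_3^+$ is absorbed by the choice $E^7=-e^7$, so that $\textbf{B}=\diag(1,1,-1)\cdot(\pm)$ up to orientation bookkeeping. Then \eqref{eq:psians} gives
\[
\psi_{ts}=e^{1234}+\omega_1^+\wedge e^{67}+\omega_2^+\wedge e^{75}-\omega_3^+\wedge e^{56}.
\]
I will double-check the signs by confirming $\varphi_{ts}\wedge\psi_{ts}=7\,\mathrm{vol}$, or equivalently by confirming $d\varphi_{ts}=4\psi_{ts}$ from \eqref{equ: structure eq 3sasaki}, since that identity is stated in the text.

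Next I wedge each $F^i$ with $\psi_{ts}$. All cross terms die: for example $e^{67}\wedge\omega_2^+\wedge e^{75}=0$, and $\omega_1^+\wedge\omega_2^+=0$. What survives is
\[
F^5\wedge\psi_{ts}=\big((2+\lambda_5)+4\big)e^{123467},
\]
and similarly for $F^6$. For $F^7$ the sign flips: $\big((2+\lambda_7)-4\big)e^{123456}$. Setting these to zero gives $\lambda_5=\lambda_6=-6$ and $\lambda_7=2$. Since the signs are mixed, this is $\mathfrak{sl}(2,\R)$.

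For $\varphi_{np}$, I find an adapted coframe in which \eqref{eq: np g2 structure} takes the standard form. Set $E^a=-\tfrac35 e^a$ for $a=5,6,7$, and rescale the transverse coframe by a factor $t$ with $t^2=\tfrac95$. Because $c\,t^2$ would have the wrong sign, the transverse orientation is reversed: the $\omega_i^+$ play the role of anti-self-dual forms for the new orientation, which matches the stated relation $\mathrm{vol}_{np}=-\tfrac{3^7}{5^2}\mathrm{vol}_{ts}$. From this I read off
\[
\psi_{np}=p\,e^{1234}+q\sum_{\mathrm{cyc}}\omega_i^+\wedge e^{jk}
\]
for explicit constants $p,q$. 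The same wedge computation then gives one condition per component, namely $(2+\lambda)p+4q=0$. By the symmetry of $\varphi_{np}$ in the indices this forces $\lambda_5=\lambda_6=\lambda_7$. Since $\lambda=-\tfrac65$ corresponds to $q/p=-\tfrac15$, the $\mathfrak{su}(2)$ conclusion follows once this ratio is confirmed.

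As an independent check of $p$ and $q$, I will also verify $d\varphi_{np}=4\psi_{np}$ using \eqref{equ: structure eq 3sasaki}. That identity determines $\psi_{np}$ directly, without tracking Hodge-star signs.

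I expect the only real obstacle to be sign and orientation bookkeeping: the relative signs in $\psi_{ts}$, and the constants $p,q$ in $\psi_{np}$, where the orientation reversal makes a Hodge-star computation error-prone. For this reason I will take $\psi=\tfrac14 d\varphi$ from the nearly parallel equations as the definitive source for both $4$-forms, rather than computing $\star\varphi$ by hand.
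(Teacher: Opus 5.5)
Your proposal is correct and is essentially the paper's proof. You write $\psi$ explicitly, wedge each $F^i=(2+\lambda_i)e^{jk}+2\omega^+_{i-4}$ with it, and solve three scalar equations. Your $\psi_{ts}$ coincides with the paper's, and your equations $\lambda_5+6=\lambda_6+6=\lambda_7-2=0$ are the paper's. For $\varphi_{np}$, $\tfrac14 d\varphi_{np}$ gives $p=\tfrac{81}{25}$ and $q=-\tfrac{81}{125}$, so $q/p=-\tfrac15$ and $\lambda=-\tfrac65$, as required.

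Your orientation remarks, however, are wrong. Only your decision to treat $\psi=\tfrac14 d\varphi$ as definitive keeps them from mattering.

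For $\varphi_{ts}$, the minus sign is absorbed because $(\omega_1^+,\omega_2^+,-\omega_3^+)=(\sigma_3^+,\sigma_1^+,\sigma_2^+)$ is a cyclic permutation of the standard self-dual triple. It is not absorbed by setting $E^7=-e^7$: that would flip the sign of $E^{567}$, and in any case $\diag(1,1,-1)\notin\SO(3)$.

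For $\varphi_{np}$, with your coframe one has $\varphi_{np}=E^{567}+\sum_{a=5}^{7}E^a\wedge(-\tfrac95\omega^+_{a-4})$. The triple $(-\omega_1^+,-\omega_2^+,-\omega_3^+)$ is self-dual and correctly oriented for $e^{1234}$. So the transverse orientation is \emph{not} reversed; the orientation change comes entirely from $E^{567}=-\tfrac{27}{125}e^{567}$. If you also reversed the transverse orientation, $\vol_{np}$ would become a positive multiple of $\vol_{ts}$, contradicting the relation you cite. It would also flip the sign of $p$ and give the wrong value $\lambda=-\tfrac{14}{5}$.

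Finally, when you compute $d\varphi$, remember that $d\omega_i^+\neq 0$. These follow from $d^2e^a=0$; for example, $d\omega_1^+=2(e^6\wedge\omega_3^+-e^7\wedge\omega_2^+)$.
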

\begin{proof}
First we consider $\varphi_{ts}$. In this case, one checks easily that the $\mathrm{G}_2$ $4$-form is given by
\[
\psi_{ts}=\frac{1}{2}\omega_1^+ \w \omega_1^+ 
+ e^{67}\w \omega_1^+
- e^{57}\w \omega_2^+
- e^{56}\w \omega_3^+.
\]
Using the structure equations \eqref{equ: structure eq 3sasaki}, the instanton condition \eqref{eq: instanton condition 1} becomes:
 \begin{equation*}
    \big( (2+\lambda_5) e^{67} + 2\omega_1^+  \big)\wedge \psi_{ts} = 
    \big( -(2+\lambda_6) e^{57} + 2\omega_2^+  \big)\wedge \psi_{ts}= 
    \big( (2+\lambda_7) e^{56} + 2\omega_3^+  \big) \wedge \psi_{ts}=0.
\end{equation*}
Expanding the latter, we get the system:
 \begin{equation*}
   \frac{1}{2}(2+\lambda_5)+2=0,\qquad
   \frac{1}{2}(2+\lambda_{6} )+2=0,\qquad
   \frac{1}{2}(2+\lambda_{7 })-2=0,
\end{equation*}
yielding the solution $(\lambda_5,\lambda_6,\lambda_7)=(-6,-6,+2)$. The result in the $\varphi_{np}$ case follows by a similar computation using
\[
\psi_{np}=\frac{81}{50}\omega_1^+ \w \omega_1^+ -\frac{81}{125}\Big(
 e^{67}\w \omega_1^+
- e^{57}\w \omega_2^+
+ e^{56}\w \omega_3^+\Big),
\]
and one finds that $\lambda_5=\lambda_6=\lambda_7=-\frac{6}{5}$ in this case.
\end{proof}

\begin{remark}
It is worth emphasising that the $\mathrm{G}_2$-instantons arising from Proposition \ref{pro: instanton sasaki} are not obtained via pullback from lower dimensional constructions; this can easily be seen by inspection of the structure equations \eqref{equ: structure eq 3sasaki}.  
\end{remark}
Since the only non-zero $\mathrm{G}_2$ torsion form is $\tau_0=4$, from \eqref{equ: definition of T} we have
\begin{eqnarray}
    T_{ts}=\frac{2}{3}\varphi_{ts} \qquad \text{and} \qquad T_{np} = \frac{2}{3}\varphi_{np}.\label{eq: torsion 3form ts and np}
\end{eqnarray}
In order to find new solutions to the heterotic system \eqref{equ: heterotic system 1}-\eqref{equ: heterotic system 3}, we also need to choose an invariant inner product on the Lie algebra $\mathfrak{sl}(2,\R)$ and $\mathfrak{su}(2)$. We consider the non-degenerate pairing given by
\begin{equation}
\langle Y_i,Y_j \rangle=r^{-1}\delta_{ij}    \label{equ: r pairing}
\end{equation} 
on both of the Lie algebras of the gauge groups $G= \mathrm{SL}(2,\R)$ and $\mathrm{SU}(2)$, where $r\in \R\backslash\{0\}$ is a free parameter. For $\mathfrak{su}(2)$, this pairing corresponds to the Killing form (up to a constant) so we shall simply write $\langle \cdot,\cdot \rangle_{\mathfrak{su}(2)}$. On the other hand, for $\mathfrak{sl}(2,\R)$ the Killing form has signature $(2,1)$ hence in this case this pairing is  not $\mathrm{ad}$-invariant, so we shall denote it by $\langle \cdot,\cdot \rangle_{\mathfrak{sl}(2,\R)'}$. 

Denoting by $A_{ts}$ and $A_{np}$ the $\mathrm{G}_2$-instantons obtained from Proposition \ref{pro: instanton sasaki}, a direct computation shows that
\begin{equation*}
    r\langle F_{A_{ts}}\w F_{A_{ts}} \rangle_{\mathfrak{sl}(2,\R)'} = -16 \psi_{ts}+20 (\omega_1^+ \w \omega_1^+)
\quad\text{and}\quad
    r\langle F_{A_{np}}\w F_{A_{np}} \rangle_{\mathfrak{su}(2)} = -\frac{400}{81}\psi_{np}+20 (\omega_1^+ \w \omega_1^+).
\end{equation*}
From \eqref{eq: torsion 3form ts and np} we have:
\begin{eqnarray*}
    dT_{ts}=\frac{8}{3}\psi_{ts} \qquad \text{and}\qquad dT_{np} = \frac{8}{3}\psi_{np},
\end{eqnarray*}
and thus, we can rewrite the above equivalently as:
\begin{equation}
     dT_{ts}=-\frac{rt}{6}\langle F_{A_{ts}}\w F_{A_{ts}} \rangle_{\mathfrak{sl}(2,\R)'}  +\frac{20t}{6} (\omega_1^+ \w \omega_1^+)
     +\frac{8}{3}(1-t)\psi_{ts}\label{equ: dTts}
\end{equation}
and
\begin{equation}
    dT_{np}=-\frac{27rt}{50}\langle F_{A_{np}} \w F_{A_{np}} \rangle_{\mathfrak{su}(2)} +\frac{54t}{5} (\omega_1^+ \w \omega_1^+)+\frac{8}{3}(1-t)\psi_{np},\label{equ: dTnp}
\end{equation}
where $t\in \R$ is a free parameter. Before describing new examples, we recall the solution to the heterotic $\mathrm{G}_2$-system found in \cite{Ivanov2005} which can viewed as the special case when $t=0$ in \eqref{equ: dTts}. 
\begin{ex}\label{example: ivanov-ivanov}
In \cite{Ivanov2005}*{\S 6}, see also \cite{Lotay2024}*{Example 4.17}, Ivanov-Ivanov showed that for $M=S^7$ endowed with $\mathrm{G}_2$-structure $\varphi_{ts}$, the associated characteristic connection $\nabla^c$ satisfies the heterotic Bianchi identity:
    \[
    \langle F_{\nabla^c}\w F_{\nabla^c}\rangle_{\mathfrak{g}_2}= -\frac{32}{27}\psi_{ts} = -\frac{4}{9}\d T_{ts},
    \]
    where $\langle \cdot,\cdot\rangle_{\mathfrak{g}_2}$ corresponds to the standard $\mathrm{ad}$-invariant pairing on $\mathfrak{g}_2$. Furthermore, from \cite{Harland2011}*{Corollary 3.2}, for any nearly parallel $\mathrm{G}_2$-structure, the associated characteristic connection $\nabla^c$ is always a $\mathrm{G}_2$-instanton. Thus, this provides a solution to the heterotic $\mathrm{G}_2$-system (after suitably scaling the $\mathrm{ad}$-invariant form). Note that this example is only known for $S^7$ endowed with $\varphi_{ts}$; indeed the calculation in \cite{Ivanov2005} uses that fact the induced round metric $g_{ts}$ has constant curvature.
    For general nearly parallel $\mathrm{G}_2$-structures, it is rather tedious to compute $\langle F_{\nabla^c}\w F_{\nabla^c}\rangle_{\mathfrak{g}_2}$, and no general computation is known to us. 
\end{ex}
We now describe new solutions arising from our ansatz.
\begin{ex}\label{example: solution on S7}
Consider again $M=S^7$. It is well-known that $S^4$ admits an anti-self-dual instanton $A_{ASD}$ with gauge group $\mathrm{SU}(2)$ induced by its Levi-Civita connection on $\Lm^2_-(S^4)$ \cite{AtiyahASD}. Pulling back this connection via the Hopf fibration $S^3 \hookrightarrow S^7\to S^4$, this gives a $\mathrm{G}_2$-instanton for both $\varphi_{ts}$ and $\varphi_{np}$ (under the inclusion $\mathfrak{su}(2)\subset\mathfrak{g}_2$). Furthermore, one has
\[
\langle F_{A_{ASD}}\w F_{A_{ASD}}\rangle_{\mathfrak{su}(2)}= -\omega_1^+ \w \omega^+_1,
\]
where $\langle \cdot, \cdot \rangle_{\mathfrak{su}(2)}$ corresponds to the Killing form (scaled by a suitable constant factor). Together with the above example of Ivanov-Ivanov, we can rewrite \eqref{equ: dTts} as:
\begin{equation*}
     dT_{ts}=-\frac{rt}{6}\langle F_{A_{ts}}\w F_{A_{ts}} \rangle_{\mathfrak{sl}(2,\R)'}  -\frac{20t}{6} \langle F_{A_{ASD}}\w F_{A_{ASD}}\rangle_{\mathfrak{su}(2)}
     -\frac{9}{4}(1-t)\langle F_{\nabla^c}\w F_{\nabla^c}\rangle_{\mathfrak{g}_2}.
\end{equation*}
Hence, we have the following: 
\begin{itemize}
    \item If we consider the product connection $A:=A_{ts}\oplus A_{ASD}\oplus \nabla^c$ on the principal $\mathrm{SL}(2,\R) \times \mathrm{SU}(2)\times \mathrm{G}_2$-bundle, and scale the parings $\langle\cdot,\cdot \rangle$ appropriately, we get a family of new solutions (depending on $t$) to \eqref{equ: heterotic system 1}-\eqref{equ: heterotic system 3} on $(S^7,\varphi_{ts})$.
    \item For $t=1$, we can  also consider the connection $A:=A_{ts}\oplus A_{ASD}$ on the principal $\mathrm{SL}(2,\R) \times \mathrm{SU}(2)$-bundle over $(S^7,\varphi_{ts})$ and $t=0$ recovers Example \ref{example: ivanov-ivanov}.
\end{itemize}
We emphasise here that the pairing \eqref{equ: r pairing} on $\mathrm{SL}(2,\R)$ is only left invariant but not bi-invariant (compare instead with Example \ref{example: ts twisted} below). 

For $(S^7,\varphi_{np})$, it is not known if $\langle F_{\nabla^c}\w F_{\nabla^c}\rangle_{\mathfrak{g}_2}$ is proportional to $\psi_{np}$. In this case, we have the following:
\begin{itemize}
    \item Setting $t=1$ in \eqref{equ: dTnp}, we can choose a $\mathrm{ad}$-invariant pairing on $\mathfrak{su}(2)\oplus \mathfrak{su}(2)$, so that the connection $A:=A_{np}\oplus A_{ASD}$ on a principal $\mathrm{SU}(2) \times \mathrm{SU}(2)$-bundle over $(S^7,\varphi_{np})$ solves the heterotic $\mathrm{G}_2$-system \eqref{equ: heterotic system 1}-\eqref{equ: heterotic system 3}. 
\end{itemize}
\end{ex}

\begin{ex}\label{example: solution on N11}
    Consider now the Aloff-Wallach space $M=N^{1,1}:=\mathrm{SU}(3)/\mathrm{U}(1)_{1,1}$, where $\mathrm{U}(1)_{1,1}=\diag(e^{i\theta},e^{i\theta},e^{-2i\theta})$. We proceed along the same lines as in the previous example. In this case, we can take the pullback of the Fubini-Study form $\omega_{FS}$ on $\mathbb{CP}^2$ via the fibration: 
    $$\mathrm{SO}(3) \hookrightarrow N^{1,1} \to \mathbb{CP}^2= \mathrm{SU}(3)/\mathrm{U}(2).$$
    Here we are viewing $\omega_{FS}$ as the curvature $2$-form of a connection $1$-form $\alpha_{FS}$ on the Hopf bundle $S^1\hookrightarrow S^5 \to \mathbb{CP}^2$. It is not hard to see that $\omega_{FS}$ is also a $\mathrm{G}_2$-instanton for both $\varphi_{ts}$ and $\varphi_{np}$; this follows from the fact that $\omega_{FS}\w \omega_i^+=0$ for $i=1,2,3$, hence we can assume $\om_{FS}\w \omega_{FS}=-\omega_1^+\w \omega_1^+$. 
    \begin{itemize}
        \item Setting $t=1$ in \eqref{equ: dTts} and \eqref{equ: dTnp}, after suitable normalisation of the pairing \eqref{equ: r pairing} we have the solutions:
    $A=A_{ts}\oplus k\alpha_{FS}$ on the principal $\mathrm{SL}(2,\R)\times \mathrm{U}(1)$ bundle over $(N^{1,1},\varphi_{ts})$ and $A=A_{np}\oplus k\alpha_{FS}$ on the principal $\mathrm{SU}(2)\times \mathrm{U}(1)$ bundle over $(N^{1,1},\varphi_{np})$, where $k\in \mathbb{Z}\backslash\{0\}$, to \eqref{equ: heterotic system 1}-\eqref{equ: heterotic system 3}.
    \end{itemize}
    The above solutions are analogous to those in Example \ref{example: solution on S7} whereby $A_{ASD}$ is replaced by $\alpha_{FS}$.
\end{ex}

We now use our $\mathrm{SO}(3)$ ansatz to construct more solutions on $3$-Sasakian $7$-manifolds. Consider the $\mathrm{G}_2$-structure given by
\begin{equation}
    \widehat{\varphi}_{ts}:=
    e^{567}
    -e^{5}\wedge \omega_1^+
    -e^{6}\wedge\omega_2^+ 
    -e^{7}\wedge \omega_3^+.\label{eq: 3sasa g2 structure modified}
\end{equation}
In terms of the $\mathrm{SO}(3)$ ansatz \eqref{eq:phians},
$\widehat{\varphi}_{ts}$ corresponds to setting $\textbf{B}=\diag( -1, -1,+1)$, whereby $\textbf{B}=\mathrm{Id}$ corresponds to $\varphi_{ts}$ \eqref{eq: 3sasa g2 structure}. In particular, the underlying metric and orientation are both unchanged. However, in contrast to $\varphi_{ts}$, the $\mathrm{G}_2$-structure determined by $\widehat{\varphi}_{ts}$ is not nearly parallel. It is still co-closed hence satisfies \eqref{equ: heterotic system 1}, and a direct computation shows:
\begin{equation*}
    d\widehat{\varphi}_{ts} = -\frac{36}{7}\star_{ts}\widehat{\varphi}_{ts} + \star_{ts}\Big(-\frac{48}{7}e^{567}-\frac{8}{7}(e^5\w \omega_1^++e^6\w \omega_2^++e^7\w \omega_3^+)\Big).
\end{equation*}
Comparing with \eqref{equ: definition of T} we deduce that
\begin{equation*}
 T_{\widehat{\varphi}_{ts}} =  6 e^{567} + 2(e^5\w \omega_1^++e^6\w \omega_2^++e^7\w \omega_3^+).
\end{equation*}
Next we consider the instanton condition:
\begin{pro}\label{prop: modified instanton}
    For the $\mathrm{G}_2$-structure $\widehat{\varphi}_{ts}$ \eqref{eq: 3sasa g2 structure modified}, the connection $A$ given by \eqref{eq:A} is a $\mathrm{G}_2$-instanton if the gauge group is $\mathrm{SU}(2)$ with $\textbf{C} = 2 \mathrm{Id}$. 
\end{pro}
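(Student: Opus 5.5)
The plan is to repeat the argument used for $\varphi_{ts}$ in the proof of Proposition \ref{pro: instanton sasaki}. There are three steps: compute the $4$-form $\widehat{\psi}_{ts}:=\star\widehat{\varphi}_{ts}$, substitute the $3$-Sasakian structure equations \eqref{equ: structure eq 3sasaki} into the diagonal instanton condition \eqref{eq: instanton condition 1}, and solve the resulting three scalar equations for $\lambda_5,\lambda_6,\lambda_7$.

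\emph{Step 1: the dual $4$-form.} Replacing $(e^5,e^6,e^7)$ by $(-e^5,-e^6,e^7)$ sends $\varphi_{ts}$ to $\widehat{\varphi}_{ts}$. This substitution has determinant $+1$, so it preserves the metric and orientation and hence commutes with the Hodge star. Applying it to the expression for $\psi_{ts}$ in the proof of Proposition \ref{pro: instanton sasaki} gives
\[
\widehat{\psi}_{ts}=\tfrac12\,\omega_1^+\wedge\omega_1^+-e^{67}\wedge\omega_1^++e^{57}\wedge\omega_2^+-e^{56}\wedge\omega_3^+ .
\]
One can double-check this directly against \eqref{eq:psians} with $\textbf{B}=\diag(-1,-1,+1)$.

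\emph{Step 2: the instanton equations.} By \eqref{equ: structure eq 3sasaki}, the three forms in \eqref{eq: instanton condition 1} are
\[
de^5+\lambda_5e^{67}=(2+\lambda_5)e^{67}+2\omega_1^+,
\]
\[
de^6+\lambda_6e^{75}=(2+\lambda_6)e^{75}+2\omega_2^+,
\]
\[
de^7+\lambda_7e^{56}=(2+\lambda_7)e^{56}+2\omega_3^+ .
\]
We wedge each with $\widehat{\psi}_{ts}$, using $\omega_i^+\wedge\omega_j^+=\delta_{ij}\,\omega_1^+\wedge\omega_1^+$ and the vanishing of any wedge containing a repeated vertical $1$-form.

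\emph{Step 3: solving.} For $e^5$, the surviving terms give
\[
\Big(\tfrac12(2+\lambda_5)-2\Big)\,e^{67}\wedge\omega_1^+\wedge\omega_1^+=0,
\]
so $\lambda_5=2$. For $e^6$, the term $2\omega_2^+\wedge e^{57}\wedge\omega_2^+$ equals $-2\,e^{75}\wedge\omega_1^+\wedge\omega_1^+$, giving the same equation and $\lambda_6=2$. For $e^7$, the term $-2\omega_3^+\wedge e^{56}\wedge\omega_3^+$ likewise forces $\lambda_7=2$. Thus $\textbf{C}=2\,\mathrm{Id}$. All entries are nonzero with a common sign, so by the discussion following \eqref{eq: instanton condition 1} the gauge algebra is $\mathfrak{su}(2)$.

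The only delicate point is the sign bookkeeping: the signs of the $e^{ij}\wedge\omega_k^+$ terms in $\widehat{\psi}_{ts}$, and the reordering $e^{57}=-e^{75}$. A sign error there would produce the $\mathrm{SL}(2,\R)$-type solution $(-6,-6,2)$ found for $\varphi_{ts}$ instead of a uniform $\lambda_i=2$. The symmetry argument in Step 1 is meant to control these signs.
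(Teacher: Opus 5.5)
Your proposal is correct and follows the paper's approach: the paper only says the proof is "analogous" to Proposition \ref{pro: instanton sasaki}, and your sign flip $(e^5,e^6,e^7)\mapsto(-e^5,-e^6,e^7)$ gives the right $\widehat{\psi}_{ts}$, after which substituting \eqref{equ: structure eq 3sasaki} into \eqref{eq: instanton condition 1} yields $\tfrac12(2+\lambda_i)-2=0$ for each $i$, i.e.\ $\textbf{C}=2\,\mathrm{Id}$. You also correctly apply the substitution only to the pointwise Hodge star and not to $d$, and so use the original structure equations in Step 2; that is what keeps the signs right.
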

\begin{proof}
This follows by an analogous computation as in Proposition \ref{pro: instanton sasaki}.
\end{proof}
Comparing with Proposition \ref{pro: instanton sasaki}, observe that while ${\varphi}_{ts}$ and $\widehat{\varphi}_{ts}$ are both isometric and induce the same orientation, the connection $A$ given by \eqref{eq:A} is a $\mathrm{G}_2$-instanton with different gauge group in each case.

\begin{ex}\label{example: ts twisted}
Denoting the $\mathrm{G}_2$-instanton from Proposition \ref{prop: modified instanton} by $\widehat{A}_{ts}$, using the structure equations \eqref{equ: structure eq 3sasaki} a long but straightforward calculation shows:
\begin{equation*}
d T_{\widehat{\varphi}_{ts}} = \langle F_{\widehat{A}_{ts}}\w F_{\widehat{A}_{ts}}\rangle_{\mathfrak{su}(2)} + 6 \omega_1^+ \w \omega_1^+,
\end{equation*}
where $\langle\cdot, \cdot\rangle_{\mathfrak{su}(2)} $ corresponds to the $\mathrm{SU}(2)$ Killing form normalised so that $\langle Y_i, Y_j\rangle_{\mathfrak{su}(2)}=\frac{1}{2}\delta_{ij}$. We can now do the same trick as in Example \ref{example: solution on S7} and \ref{example: solution on N11}, yielding: 
\begin{itemize}
    \item $A:=\hat{A}_{ts}\oplus A_{ASD}$ on $(S^7,\widehat{\varphi}_{ts})$ with gauge group $\mathrm{SU}(2)\times \mathrm{SU}(2)$, 
    \item $A:=\hat{A}_{ts}\oplus k \alpha_{FS}$ on $(N^{1,1},\widehat{\varphi}_{ts})$ with gauge group $\mathrm{SU}(2)\times \mathrm{U}(1)$,
\end{itemize}
both solving \eqref{equ: heterotic system 1}-\eqref{equ: heterotic system 3}. In contrast to the solutions for the nearly parallel $\mathrm{G}_2$-structure $\varphi_{ts}$ in Example \ref{example: solution on S7} and \ref{example: solution on N11}, the gauge group is now compact and the pairing on the Lie algebra corresponds to a $\mathrm{ad}$-invariant one.
\end{ex}

\begin{remark}
Similar to the above example, one can consider the $\mathrm{G}_2$-structure defined by the $3$-form $\widehat{\varphi}_{np}$, which is isometric to $\varphi_{np}$ and with same orientation such that $\widehat{\varphi}_{np}$ corresponds to $\textbf{B}=\diag(-1,-1,+1)$ and $\varphi_{np}$ to $\textbf{B}=\mathrm{Id}$. In this case, one finds that $A$, given by \eqref{eq:A}, is a $\mathrm{G}_2$-instanton with respect to $\widehat{\varphi}_{np}$ if the gauge group is again $\mathrm{SU}(2)$ but now with $\textbf{C}=\diag(\frac{14}{5},\frac{14}{5},\frac{6}{5})$. Unfortunately, in this case we have not been able to find a solution to the heterotic Bianchi identity \eqref{equ: heterotic system 3}.
\end{remark}

\section{
\texorpdfstring{$S^1$}{}-family of integrable 
\texorpdfstring{$\rmG_2$}{}-structures}\label{sec: s1 invariant}

In this section, we consider integrable $\mathrm{G}_2$-structures arising on an $S^1$-bundle over a $6$-manifold endowed with an $\SU(3)$-structure. We show that, under certain torsion conditions of the latter, the total torsion of the former is constant for all values of the parameter and they provide solutions to the heterotic $\G_2$-system. We illustrate a few applications in explicit examples.
The results in this section extend those in \cite{FinoG2T2023}*{\S 4} to the case when the $S^1$-bundle is not necessarily a product and with non-trivial connection.

Let $(Q, \omega,\Upsilon_+)$ denote a $6$-manifold endowed with an $\SU(3)$-structure and let $M$ be a principal $S^1$-bundle over $Q$ endowed with a connection $1$-form $\eta$. We can then define a natural one-parameter family of $S^1$-invariant $\rG_2$-structures on $M$ by
\begin{equation}\label{eq: S1-varphi}
\begin{gathered}
\varphi_t = \eta \wedge \omega + \mathrm{Re}\big(e^{it}(\Upsilon_++i\Upsilon_-)\big), \\
    \psi_t = \frac{1}{2}\omega \wedge \omega -\eta \wedge \mathrm{Im}\big(e^{it}(\Upsilon_++i\Upsilon_-)\big),
\end{gathered}
\end{equation}
where $t\in[0,2\pi)$. The curvature $2$-form $\d\eta$ descends to $Q$ and defines an integral cohomology class in $H^2(Q,\mathbb{Z})$. Throughout this section we shall identify tensors on $Q$ with their pullbacks to $M$.

We emphasise that while each $\varphi_t$ defines a distinct $\G_2$-structure for different $t$, they all nonetheless induce the same metric:
\[
g_{\varphi} = \eta \otimes \eta + g_{\omega},
\]
and orientation $\vol_\varphi=\eta \w \vol_{\omega}$ on $M$. Here $g_\omega$ is the metric in $Q$ induced by the $\SU(3)$-structure (see Section \ref{sec:backsu3} for notation). In order to distinguish between the Hodge star operators associated to $g_\varphi$ and $g_\omega$, we shall denote them by $\star_7$ and $\star_6$, respectively. In particular, for any $k$-form $\alpha$ on $Q$, we have
\begin{equation}\label{eq: star7_star6}
    \star_7\alpha=(\star_6\alpha)\wedge\eta, \qquad \star_7(\alpha\wedge\eta)=(-1)^k\star_6\alpha.
\end{equation}
We can encode the integrable condition for $\varphi_t$ in \eqref{eq: S1-varphi} in terms of data on $(Q, \omega,\Upsilon_+)$ as follows:

\begin{pro}\label{prop: S1- solutions}
The $\rG_2$-structure defined by \eqref{eq: S1-varphi} is integrable, i.e. $\tau_2=0$, for all $t\in [0,2\pi)$ if and only if $\sigma_2=\pi_2=0$, $\pi_1=2\nu_1$ and $\d\eta$ is $J$-invariant i.e. of type $(1,1)$. In this case, the torsion $3$-form $T_{\varphi_t}$ is explicitly given by
\begin{align}\label{eq:Tcte general}
    \begin{split}
        T_{\varphi_t} 
        &= \eta \wedge \Big(\d\eta -2{(\d\eta)_0}\omega\Big) + T_\omega+{(d\eta)_0} \mathrm{Re}(e^{it}{\Upsilon}), 
        \end{split}
    \end{align}
where $(d\eta)_0$ denotes the $\omega$-component of $d\eta$ and $T_\omega$ is the torsion form of the Bismut connection of $(\omega,\Upsilon_+)$ given by \eqref{eq: def T_omega}.
In particular, if
$d\eta$ is traceless then $T_{\vp_t}$ is independent of $t$, and it is simply given by
\begin{align}\label{eq:Tcte}
    \begin{split}
        T_{\varphi_t} 
        &= \eta \wedge \d\eta + T_\omega. 
        \end{split}
    \end{align} 
\end{pro}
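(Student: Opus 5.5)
We compute $\d\psi_t$ directly from \eqref{eq: S1-varphi} and read off $\tau_2$ from \eqref{equ: g2 torsion 2}. Write $\Upsilon^t:=e^{it}\Upsilon$, so that $\mathrm{Re}\,\Upsilon^t=\cos t\,\Upsilon_+-\sin t\,\Upsilon_-$ and $\mathrm{Im}\,\Upsilon^t=\sin t\,\Upsilon_++\cos t\,\Upsilon_-$. Since $\d\eta$ is basic,
\[
\d\psi_t=\omega\wedge\d\omega-\d\eta\wedge\mathrm{Im}\,\Upsilon^t+\eta\wedge\d(\mathrm{Im}\,\Upsilon^t).
\]
Substituting \eqref{eq:torsu3} gives $\omega\wedge\d\omega=\nu_1\wedge\omega^2$ (using $\omega\wedge\Upsilon_\pm=0$ and $\omega\wedge\nu_3=0$). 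The derivative $\d(\mathrm{Im}\,\Upsilon^t)$ equals $(\sin t\,\pi_0+\cos t\,\sigma_0)\omega^2+\pi_1\wedge\mathrm{Im}\,\Upsilon^t-(\sin t\,\pi_2+\cos t\,\sigma_2)\wedge\omega$. Decompose $\d\eta=(\d\eta)_0\,\omega+(\d\eta)_6+(\d\eta)_8$. Then $(\d\eta)_8\wedge\Upsilon=0$, $\omega\wedge\Upsilon=0$, and $(\d\eta)_6\wedge\mathrm{Im}\,\Upsilon^t$ is a $5$-form on $Q$ of the form $\beta_t\wedge\omega^2$ for a $1$-form $\beta_t$ built from $(\d\eta)_6$ rotated by $t$.

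Next we write the candidate $4\tau_1\wedge\psi_t$ and project. Decompose a general $5$-form on $M$ as (basic $5$-form) $+\,\eta\wedge$(basic $4$-form), and split $\Lambda^4(Q)=\Lambda^4_1\oplus\Lambda^4_6\oplus\Lambda^4_8$. Using the $\G_2$ decomposition $\Lambda^5=\Lambda^5_7\oplus\Lambda^5_{14}$, the condition $\tau_2=0$ means $\d\psi_t\in\Lambda^5_7=\{\theta\wedge\psi_t\}$. A $1$-form $\theta=f\eta+\theta_Q$ gives
\[
\theta\wedge\psi_t=\tfrac12\theta_Q\wedge\omega^2+\eta\wedge\big(\tfrac f2\omega^2+\theta_Q\wedge\mathrm{Im}\,\Upsilon^t\big).
\]
Matching the basic parts determines $\theta_Q$, namely $\tfrac12\theta_Q\wedge\omega^2=\nu_1\wedge\omega^2-(\d\eta)_6\wedge\mathrm{Im}\,\Upsilon^t$. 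Then matching the $\eta\wedge\Lambda^4_6$ part gives $\theta_Q\wedge\mathrm{Im}\,\Upsilon^t=\pi_1\wedge\mathrm{Im}\,\Upsilon^t$. The $\eta\wedge\Lambda^4_8$ part must vanish, which forces $\sin t\,\pi_2+\cos t\,\sigma_2=0$. Requiring all of this for every $t$ yields $\pi_2=\sigma_2=0$. It also forces $\pi_1=2\nu_1-2\widetilde{(\d\eta)_6}^{\,t}$, where $\widetilde{\cdot}^{\,t}$ denotes the $t$-rotated $1$-form. Since the left side is independent of $t$ while the correction rotates with $t$ (its $\cos t$ and $\sin t$ coefficients must vanish separately), we get $(\d\eta)_6=0$ and $\pi_1=2\nu_1$. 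The converse is the same computation read backwards. The $\eta\wedge\Lambda^4_1$ part only fixes $f$ and imposes no condition.

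For the torsion, we will use the second line of \eqref{equ: definition of T}, or equivalently verify the formula via the first line, $T=\tfrac16\star(\d\varphi\wedge\varphi)\varphi-\star\d\varphi+\star(4\tau_1\wedge\varphi)$. We compute
\[
\d\varphi_t=\d\eta\wedge\omega-\eta\wedge\d\omega+\d\,\mathrm{Re}\,\Upsilon^t
\]
and apply \eqref{eq: star7_star6}, with $\tau_1=\tfrac14\theta$ obtained above. The contributions from $\nu_1,\nu_3,\pi_0,\sigma_0$ should assemble into $J\d\omega-\hat N_J=T_\omega$, as in \eqref{eq: def T_omega}. The $\d\eta$ terms then produce $\eta\wedge\d\eta$. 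The $(\d\eta)_0$ part contributes $\star_7(\omega^2\wedge\omega)$-type terms: it shifts $\tau_0$ and gives the correction $-2(\d\eta)_0\,\eta\wedge\omega+(\d\eta)_0\mathrm{Re}\,\Upsilon^t$. When $(\d\eta)_0=0$, this reduces to \eqref{eq:Tcte}.

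The main obstacle is the bookkeeping of the type decompositions. This means pinning down the exact constants relating $(\d\eta)_6\wedge\mathrm{Im}\,\Upsilon^t$ to $\beta_t\wedge\omega^2$ and the $\star_6$ identities such as $\star_6(\alpha\wedge\omega^2)$ and $\star_6(\alpha\wedge\Upsilon_\pm)=\mp J\alpha\wedge\Upsilon_\mp$ up to sign. Only with these constants fixed do the $T_\omega$ terms match \eqref{eq: def T_omega} exactly. We would fix these constants first on the flat model $\C^3$ with $\omega=\tfrac i2\sum \d z_j\wedge\d\bar z_j$ and $\Upsilon=\d z_1\wedge\d z_2\wedge\d z_3$.
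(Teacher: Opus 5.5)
Your proof is correct, but it takes a more self-contained route than the paper. The paper never computes $\d\psi_t$. It quotes the integrability criterion and torsion formula for $S^1$-invariant $\G_2$-structures from \cite{finofow}*{Theorem 5.5 and (50)}. It applies them to the rotated structure $(\omega,\tilde\Upsilon_+)$, with $\tilde\pi_1=\pi_1$, $\tilde\pi_0+i\tilde\sigma_0=e^{it}(\pi_0+i\sigma_0)$ and $\tilde\pi_2+i\tilde\sigma_2=e^{it}(\pi_2+i\sigma_2)$, and then asks when the conditions hold for all $t$. Your projection of $\d\psi_t$ onto $\Lambda^5_7$ re-derives exactly those quoted conditions in this setting. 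Your $\eta\wedge\Lambda^4_8$ component is $\tilde\sigma_2=0$, and your basic and $\eta\wedge\Lambda^4_6$ components together give \eqref{eq: cond1}. After that, your ``for all $t$'' step is the paper's. To conclude $(\d\eta)_6=0$ you only need that $\alpha\mapsto\alpha\wedge\Upsilon_+$ is injective on $\Lambda^2_6$. Your route gains independence from \cite{finofow} and gives $\tau_1=\tfrac14\theta=\tfrac12(\sin t\,\pi_0+\cos t\,\sigma_0)\eta+\tfrac12\nu_1$ for free. The sign of the $\sin t\,\pi_0$ term there is opposite to the one printed in Corollary \ref{cor:tforms}; a direct check with $\tau_1=\tfrac1{12}\star(\varphi\wedge\star\d\varphi)$ supports your sign. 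The paper's route gains brevity and a torsion formula, \eqref{equ: cond3}, valid for arbitrary $\d\eta$ at fixed $t$.

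The torsion half of your proposal is still only a plan, so your claim that the terms ``should assemble into $T_\omega$'' has to be checked. The check is routine. Once $\pi_2=\sigma_2=0$, $\pi_1=2\nu_1$ and $(\d\eta)_6=0$ are imposed, both $\d\varphi_t$ and $\tau_1$ are linear in $(\d\eta,\pi_0,\sigma_0,\nu_1,\nu_3)$. So evaluate the first line of \eqref{equ: definition of T} piece by piece, rather than computing $\tau_3$. Use $\star_6\omega^2=2\omega$, $\star_6\Upsilon_\pm=\pm\Upsilon_\mp$, $\star_6(\beta\wedge\omega)=-\beta$ for $\beta\in\Lambda^2_8$, and \eqref{eq: star7_star6}. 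The contributions are:
\begin{itemize}
\item $-(\d\eta)_0\,\eta\wedge\omega+(\d\eta)_0\mathrm{Re}(e^{it}\Upsilon)$ from $(\d\eta)_0\omega$;
\item $\eta\wedge(\d\eta)_8$ from $(\d\eta)_8$;
\item $\tfrac12(\pi_0\Upsilon_++\sigma_0\Upsilon_-)$ from $\pi_0,\sigma_0$, where the $t$-dependence cancels via $\cos^2t+\sin^2t=1$ (this is the paper's identity $\tilde\sigma_0\tilde\Upsilon_-+\tilde\pi_0\tilde\Upsilon_+=\sigma_0\Upsilon_-+\pi_0\Upsilon_+$);
\item $-\star_6(\nu_1\wedge\omega)$ from $\nu_1$;
\item $\star_6\nu_3$ from $\nu_3$.
\end{itemize}
These sum to \eqref{eq:Tcte general} because $-\star_6(\nu_1\wedge\omega)=J\nu_1\wedge\omega$ and $\star_6\nu_3=J\nu_3$ when $(J\alpha)(X_1,\dots,X_k)=\alpha(JX_1,\dots,JX_k)$. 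That is the convention that makes the two lines of \eqref{eq: def T_omega} agree.
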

\begin{proof}
Let us write $\tilde{\Upsilon}_+ +i\tilde{\Upsilon}_-:=e^{it}(\Upsilon_++i\Upsilon_-)$ for the complex $(3,0)$-form. The torsion forms \eqref{eq:torsu3} of the $\mathrm{SU}(3)$-structure determined by $(\omega, \tilde{\Upsilon}_+)$ are given by
\begin{equation*}
\d(\tilde{\Upsilon}_+ +i\tilde{\Upsilon}_-) = e^{it}(\pi_0+i\sigma_0) \omega^2 + \pi_1 \w (\tilde{\Upsilon}_+ +i\tilde{\Upsilon}_-) - e^{it}(\pi_2 + i\sigma_2) \wedge \omega.
\end{equation*}
We see immediately that $\tilde{\pi}_1=\pi_1$ and
\begin{gather}
\begin{split}\label{equ: cond2}
    \tilde{\pi}_0 = \mathrm{Re}(e^{it}(\pi_0+i\sigma_0)),\qquad 
    \tilde{\sigma}_0 = \mathrm{Im}(e^{it}(\pi_0+i\sigma_0)),\\
    \tilde{\pi}_2 = \mathrm{Re}(e^{it}(\pi_2+i\sigma_2)),\qquad 
    \tilde{\sigma}_2 = \mathrm{Im}(e^{it}(\pi_2+i\sigma_2)).
\end{split}
\end{gather}
From \cite{finofow}*{Theorem 5.5}, it follows that the $S^1$-invariant  $\mathrm{G}_2$-structure $\varphi_t$ is integrable if and only if $\tilde{\sigma}_2=0$ and 
\begin{equation}
    2(\d\eta)^2_6 \w \tilde{\Upsilon}_- - 2 \nu_1 \w \om^2 + \pi_1 \w \om^2 = 0.\label{eq: cond1}
\end{equation}
Furthermore, in this case the torsion $3$-form is given by
\begin{align}
    T_{\varphi_t} =\ & \eta \w (\star_6 (\frac{2}{3}\nu_1-\frac{1}{3}\pi_1) \w \tilde{\Upsilon}_+ - (\d\eta)_0 \om-\frac{1}{3}(\d\eta)^2_6+(\d\eta)^2_8-\tilde{\pi}_2)\label{equ: cond3}\\
    &\hspace{-3mm}+\Big((\d\eta)_0\tilde{\Upsilon}_+ +\frac{1}{2}(\tilde{\sigma}_0 \tilde{\Upsilon}_- +\tilde{\pi}_0\tilde{\Upsilon}_+) + \star_{6}\big( \nu_3-(\frac{2}{3}\pi_1-\frac{1}{3}\nu_1-\frac{1}{3}\star_{6}(\d\eta \w \tilde{\Upsilon}_+))\w \om \big) \Big)\nonumber,
\end{align}
where $(\d \eta)_0$ denotes the $\omega$-component of $\d\eta$ with respect to \eqref{eq:L2su3}, see \cite{finofow}*{(50)}. It is not hard to see that \eqref{eq: cond1} holds for all $t\in [0,2\pi)$ precisely if $(d\eta)^2_6=0$, i.e. $d\eta \in \langle\omega\rangle\oplus \Lm^2_8$, and $\pi_1=2\nu_1$. Secondly, from \eqref{equ: cond2} we see that $\tilde{\sigma}_2=0$ for all $t$ precisely if $\pi_2=\sigma_2=0$. This proves the first assertion.

Now we want to find the condition so that $T_{\varphi_t}=T_{\varphi_0}$ for all $t\in [0,2\pi)$. By inspection of \eqref{equ: cond3} we see that we need $(d\eta)_0\tilde{\Upsilon}_+$ to vanish; since $(\d\eta)^2_6=0$, this is equivalent to requiring $d\eta \in \Lm^2_8$. Finally, a direct computation shows that for any $t$:
\[
\tilde{\sigma}_0 \tilde{\Upsilon}_- + \tilde{\pi}_0 \tilde{\Upsilon}_+ = {\sigma}_0 {\Upsilon}_- + {\pi}_0 {\Upsilon}_+,
\]
so $T_{\varphi_t}$ is independent of $t$ and this concludes the proof.
\end{proof}

Henceforth, we shall assume that the $\rG_2$-structures defined by \eqref{eq: S1-varphi} are all integrable. Note that the intrinsic $\rG_2$ torsion forms $\tau_i$ do depend on $t\in [0,2\pi)$ in general; even if  $T_{\varphi_t}$ is $t$-independent. More concretely, we have:
\begin{cor}\label{cor:tforms}
    Let $\varphi_t$ be the integrable $\rG_2$-structure defined by \eqref{eq: S1-varphi} with torsion \eqref{eq:Tcte general}. Then, its intrinsic torsion forms $\tau_0$ and $\tau_1$ are given by:
    \begin{equation*}
        \tau_0(t)=\frac{12}{7}\left(\cos(t)\pi_0-\sin(t)\sigma_0\right)+{\frac{6}{7}(\d\eta)_0}, \qquad \tau_1(t)=\frac12\left(\cos(t)\sigma_0-\sin(t)\pi_0\right)\eta+\frac{1}{2}\nu_1.
    \end{equation*}
\end{cor}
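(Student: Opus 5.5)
We will read off $\tau_0$ and $\tau_1$ from the torsion $3$-form via $T_{\varphi}=\frac16\tau_0\varphi+\star(\tau_1\w\varphi)-\tau_3$, as in \eqref{equ: definition of T}. Since $\tau_3\in\Lambda^3_{27}$, we have $\tau_3\w\psi_t=0$ and $\tau_3\w\varphi_t=0$. We also use $\varphi_t\w\psi_t=7\vol$ and the identity $\star(\tau_1\w\varphi)\w\psi=0$, since $\Lambda^3_7$ is orthogonal to $\varphi$. Wedging $T_{\varphi_t}$ with $\psi_t$ therefore gives
\[
\tau_0=\tfrac{6}{7}\star_7(T_{\varphi_t}\w\psi_t).
\]
For $\tau_1$, the projection of $T$ to $\Lambda^3_7$ is recovered from $T\w\varphi_t$. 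Using the standard identity $\star(\alpha\w\varphi)\w\varphi=-4\star\alpha$ for $\alpha\in\Lambda^1$, we get $\tau_1=\frac14\star_7(T_{\varphi_t}\w\varphi_t)$. The sign has to be checked against the convention $d\psi=4\tau_1\w\psi$. Alternatively, we can compute directly $\tau_1=\frac1{12}\star(\varphi\w\star d\varphi)$, which avoids the sign issue; we will use whichever is cleaner and cross-check.

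Next we substitute \eqref{eq:Tcte general} together with $T_\omega=\frac{\pi_0}{2}\Upsilon_++\frac{\sigma_0}{2}\Upsilon_-+J\nu_1\w\omega+J\nu_3$, and expand term by term. We will use the $\SU(3)$ algebraic identities
\[
\Upsilon_\pm\w\omega=0,\qquad \Upsilon_+\w\Upsilon_-=\tfrac23\omega^3=4\vol_\omega,\qquad \Upsilon_\pm\w\Upsilon_\pm=0,\qquad \Lambda^2_8\w\omega^2=0,\qquad \Lambda^3_{12}\w\Upsilon_\pm=0,
\]
together with $\vol_7=\eta\w\vol_\omega$. Write $\tilde\Upsilon_\pm$ for the rotated forms, so that $\tilde\Upsilon_-=\cos t\,\Upsilon_--\sin t\,\Upsilon_+$.

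For $T\w\psi_t$, the relevant pieces are as follows. The term $\eta\w(d\eta-2(d\eta)_0\omega)\w\frac12\omega^2$ contributes through the $\omega$-component only: it gives $((d\eta)_0-2(d\eta)_0)\cdot 3\vol=-3(d\eta)_0\vol$. The terms $T_\omega\w(-\eta\w\tilde\Upsilon_-)$ and $(d\eta)_0\tilde\Upsilon_+\w(-\eta\w\tilde\Upsilon_-)$ pair $\Upsilon_\pm$ with $\tilde\Upsilon_-$. The components $J\nu_1\w\omega$ and $J\nu_3$ drop out, because they wedge to zero against both $\omega^2$ (for degree reasons with $\eta$) and $\tilde\Upsilon_-$. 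Collecting, we expect
\[
\tau_0=\tfrac{12}{7}(\cos t\,\pi_0-\sin t\,\sigma_0)+c\,(d\eta)_0,
\]
and we will verify that the constants combine to $c=\frac67$. The trace part of $\eta\w d\eta$ and the $(d\eta)_0\mathrm{Re}(e^{it}\Upsilon)$ term must add up correctly.

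Similarly, for $T\w\varphi_t$ only the $\Lambda^3_7$-type pieces survive. The first is $J\nu_1\w\omega\w\omega$ wedged with $\eta$, which yields the $\frac12\nu_1$ term. The second comes from the $\Upsilon_\pm$ parts of $T_\omega$ wedged with $\tilde\Upsilon_+$; this gives a multiple of $\eta$, with coefficient obtained from $\Upsilon_+\w\tilde\Upsilon_+$ and $\Upsilon_-\w\tilde\Upsilon_+$, namely $\frac12(\cos t\,\sigma_0-\sin t\,\pi_0)$. We also need that the $(d\eta)_0$ contributions to $\tau_1$ cancel between $\eta\w(d\eta-2(d\eta)_0\omega)\w\eta\w\omega$ (which vanishes since $\eta\w\eta=0$) and $(d\eta)_0\tilde\Upsilon_+\w\tilde\Upsilon_+=0$. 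The main obstacle is keeping the signs and normalisations consistent: the orientation $\eta\w\vol_\omega$, the identities \eqref{eq: star7_star6}, and the convention $\hat N_J$ hidden in $T_\omega$. We will fix them by testing on a simple case, e.g.\ $t=0$ with only $\pi_0\neq0$, where the result must agree with the known formula.
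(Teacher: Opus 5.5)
\textbf{Overall.} You follow the paper's route: substitute \eqref{eq:Tcte general} into $\tau_0=\frac67\star_7(T_{\varphi_t}\w\psi_t)$, which is the paper's $\frac67 g_\varphi(T_{\varphi_t},\varphi_t)$, and into a $\Lambda^3_7$-extraction formula for $\tau_1$. However, you slip on two signs, and the $\tau_1$ coefficient you announce is not what the computation gives.

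\textbf{The two sign slips.} First, your own identity gives $T_{\varphi_t}\w\varphi_t=\star(\tau_1\w\varphi_t)\w\varphi_t=-4\star\tau_1$. Hence $\tau_1=-\frac14\star_7(T_{\varphi_t}\w\varphi_t)$, as the paper uses, not $+\frac14$. Second, the rotated forms are
\[
\tilde\Upsilon_-=\mathrm{Im}(e^{it}\Upsilon)=\cos t\,\Upsilon_-+\sin t\,\Upsilon_+ ,\qquad \tilde\Upsilon_+=\cos t\,\Upsilon_+-\sin t\,\Upsilon_- .
\]
With your sign for $\tilde\Upsilon_-$, the term $T_\omega\w(-\eta\w\tilde\Upsilon_-)$ would put $\cos t\,\pi_0+\sin t\,\sigma_0$ into $\tau_0$. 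With the correct signs,
\[
T_{\varphi_t}\w\psi_t=\big(2(\cos t\,\pi_0-\sin t\,\sigma_0)+(d\eta)_0\big)\,\eta\w\vol_\omega .
\]
Here the $(d\eta)_0$ is your $-3(d\eta)_0$ plus $+4(d\eta)_0$ from $(d\eta)_0\tilde\Upsilon_+\w(-\eta\w\tilde\Upsilon_-)$. So $\tau_0$ comes out exactly as stated.

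\textbf{The gap: the $\tau_1$ step.} You assert the $\eta$-coefficient $\frac12(\cos t\,\sigma_0-\sin t\,\pi_0)$ rather than derive it. Carrying the computation out gives
\[
T_{\varphi_t}\w\varphi_t=-\eta\w J\nu_1\w\omega^2-2(\cos t\,\sigma_0+\sin t\,\pi_0)\,\vol_\omega .
\]
So the $\eta$-component of $\tau_1$ is $\frac12(\cos t\,\sigma_0+\sin t\,\pi_0)$, with the opposite sign on $\sin t\,\pi_0$.

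\textbf{Independent confirmation.} The direct route you mention agrees. Read off the $\eta\w\omega^2$-component of $d\psi_t=4\tau_1\w\psi_t$, using $d\tilde\Upsilon_-=\tilde\sigma_0\,\omega^2+\pi_1\w\tilde\Upsilon_-$ with $\tilde\sigma_0=\mathrm{Im}(e^{it}(\pi_0+i\sigma_0))$ from \eqref{equ: cond2}. This gives $\tau_1(t)=\frac12\tilde\sigma_0\,\eta+\frac12\nu_1$. In other words, $\tau_0(t)$ and $\tau_1(t)$ are just the $t=0$ formulas for the rotated structure $(\omega,\tilde\Upsilon_+)$, with $\tilde\pi_0=\cos t\,\pi_0-\sin t\,\sigma_0$ and $\tilde\sigma_0=\cos t\,\sigma_0+\sin t\,\pi_0$.

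\textbf{Consequence.} The printed $\tau_1$ appears to carry a sign misprint in the $\sin t\,\pi_0$ term. The paper's one-line proof does not catch it, and \eqref{eq:n32tforms} inherits it. No consistent computation reaches the printed expression: you would only land on it by pairing the correct $-\frac14$ with your incorrect rotation, which then breaks $\tau_0$. Your proposed sanity check at $t=0$ cannot detect this, since every $\sin t$ term vanishes there; test at $t=\pi/2$ instead.
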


\begin{proof}
    The result follows from the expression \eqref{eq: S1-varphi} and \eqref{eq:Tcte} into the formulae:
    $$
    \tau_0(t)=\frac{6}{7}g_{\varphi}( T_{\varphi_t},\varphi_t) \qandq \tau_1(t)=-\frac14\star_7(T_{\varphi_t}\wedge\varphi_t). \vspace{-8mm}$$
\end{proof}

As an application of Proposition \ref{prop: S1- solutions}, we show how one can lift solutions to the heterotic $\mathrm{SU}(3)$-system to $S^1$-invariant solutions to the heterotic $\mathrm{G}_2$-system. 

\begin{teo}\label{thm: S1- solutions}
     Let $M$ be a principal $S^1$-bundle over  $(Q,\omega,\Upsilon_+)$ with connection form $\eta$ such that $d\eta \in \Lambda^2_8$ and the torsion forms of $(\omega,\Upsilon_+)$ satisfy $\sigma_2=\pi_2=0$ and $\pi_1=2\nu_1$. Assume that $(\omega,\Upsilon_+,A)$ solves the $\SU(3)$ heterotic Bianchi identity \eqref{eq: het_SU3-Bianchi-ident}, where $A$ is an $\SU(3)$-instanton with gauge group $G$. Then $(\varphi_t,\eta\oplus A)$ with $\varphi_t$ given in \eqref{eq: S1-varphi} is a solution of the heterotic $\rG_2$-system \eqref{equ: heterotic system 1}-\eqref{equ: heterotic system 3} for any $t\in[0,2\pi)$.
\end{teo}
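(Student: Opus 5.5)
Three things need checking: integrability \eqref{equ: heterotic system 1}, the instanton condition \eqref{equ: heterotic system 2} for $\eta\oplus A$, and the Bianchi identity \eqref{equ: heterotic system 3}. Integrability comes straight from Proposition \ref{prop: S1- solutions}. Its hypotheses are $\sigma_2=\pi_2=0$, $\pi_1=2\nu_1$ and $d\eta$ of type $(1,1)$, and all of these hold because $d\eta\in\Lambda^2_8$. So $\tau_2=0$ for every $t$. Since $d\eta\in\Lambda^2_8$ is traceless, $(d\eta)_0=0$, and \eqref{eq:Tcte} gives $T_{\varphi_t}=\eta\wedge d\eta+T_\omega$, independent of $t$.

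For the instanton condition, the curvature of $\eta\oplus A$ is $d\eta\oplus F_A$. Both are pulled back from $Q$ and lie in $\Lambda^2_8$: $d\eta$ by hypothesis, and $F_A$ because $A$ is an $\SU(3)$-instanton, by \eqref{eq: SU3-instanton}. Wedging with $\psi_t=\frac12\omega^2-\eta\wedge\mathrm{Im}(e^{it}\Upsilon)$ produces two terms. The first is $\beta\wedge\omega^2$, which vanishes for $\beta\in\Lambda^2_8$ since such forms are primitive. The second is $\eta\wedge\beta\wedge\mathrm{Im}(e^{it}\Upsilon)$. It vanishes because $\beta$ has type $(1,1)$ and $\Upsilon$ has type $(3,0)$, so $\beta\wedge\Upsilon$ is a $(4,1)$-form on a complex $3$-dimensional space, hence zero. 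This holds for every $t$. Therefore $(d\eta\oplus F_A)\wedge\psi_t=0$, and the connection is a $\G_2$-instanton for each $\varphi_t$.

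For the Bianchi identity, I compute $dT_{\varphi_t}=d\eta\wedge d\eta+dT_\omega$, using $d(\eta\wedge d\eta)=d\eta\wedge d\eta$. The hypothesis \eqref{eq: het_SU3-Bianchi-ident} gives $dT_\omega=\langle F_A\wedge F_A\rangle_{\mathfrak g}$. I then equip $\mathfrak{u}(1)\oplus\mathfrak g$ with the orthogonal sum pairing, taking $|Y_0|^2=1$ on the $\mathfrak u(1)$ generator. This pairing is ad-invariant, and with it $\langle F\wedge F\rangle=d\eta\wedge d\eta+\langle F_A\wedge F_A\rangle_{\mathfrak g}=dT_{\varphi_t}$. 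That establishes \eqref{equ: heterotic system 3}.

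The only step needing care is the vanishing of the instanton term $\eta\wedge\beta\wedge\mathrm{Im}(e^{it}\Upsilon)$ for all $t$, which is where the full type-$(1,1)$ condition is used. Sign and normalisation conventions for the $\mathfrak u(1)$ pairing need only be fixed consistently.
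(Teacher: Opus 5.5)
Your proof is correct and follows the paper's argument: integrability and the $t$-independent torsion $T_{\varphi_t}=\eta\wedge d\eta+T_\omega$ come from Proposition~\ref{prop: S1- solutions}, and the instanton and Bianchi conditions then follow using the orthogonal-sum pairing on $\mathfrak{u}(1)\oplus\mathfrak{g}$; your type-$(1,1)$ check of $F\wedge\psi_t=0$ is more explicit than the paper's. One wording slip: $\sigma_2=\pi_2=0$ and $\pi_1=2\nu_1$ are hypotheses of the theorem, not consequences of $d\eta\in\Lambda^2_8$ (only the $(1,1)$-condition on $d\eta$ follows from that).
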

\begin{proof}
    By Proposition \ref{prop: S1- solutions}, we have $\varphi_t$ integrable with $T_{\varphi_t}=\eta\wedge \d\eta+T_\omega$ independent of $t$ since $d\eta\in \Lambda^2_8$. If $(\omega,\Upsilon_+, A)$ solves \eqref{eq: het_SU3-Bianchi-ident} then 
    $$    F_{A}\wedge\psi_t=0 \qquad\text{and} \qquad \d T_{\varphi_t}=\langle F_{\eta\oplus A}\wedge F_{\eta\oplus A}\rangle,
    $$
    where $\langle\cdot,\cdot\rangle:=\langle\cdot,\cdot\rangle_{\mathfrak{u}(1)}\oplus\langle\cdot,\cdot\rangle_{\mathfrak{g}}$.
\end{proof}

We next give a few applications of the above results with some explicit examples; several of which appear to be new.

\begin{ex}[Solutions with $dT_\omega=0$] \label{ex: on s3s3}
Consider $Q=S^3\times S^3$ with the
usual left invariant co-framing $\{e_i\}^6_{i=1}$ satisfying:
\begin{equation}\label{eq:  coframe S3xS3}
de^1 = -2 e^{23}, \quad 
de^2 = -2 e^{31}, \quad 
de^3 = -2 e^{12}, \quad 
de^4 = -2 e^{56}, \quad 
de^5 = -2 e^{64}, \quad 
de^6 = -2 e^{45}. 
\end{equation}
We define an $\SU(3)$-structure on $Q$ by 
\begin{equation}
\begin{gathered}\label{eq: SU3_Udhav_example}
\omega = f^{12}+f^{34}+f^{56},\\
\Upsilon = (f^1+if^2)\wedge(f^3+if^4)\wedge(f^5+if^6),
\end{gathered}
\end{equation}
where $f^{2i-1} := \frac12(e^i - e^{i+3})$ and  $f^{2i} := \frac12(e^i + e^{i+3})$ for $i=1,2,3$.
It was shown in \cite{finofow}*{Example 5.8} that the only non-zero torsion forms of the $\SU(3)$-structure given by \eqref{eq: SU3_Udhav_example} are $\sigma_0$ and $\nu_3$, and that $T_\omega$, given by \eqref{eq: def T_omega}, is closed. Consider $M=S^3\times S^3\times S^1$ with the connection $1$-form $\eta$  satisfying $d\eta=f^{13}+f^{24}=-\frac{1}{4}d(e^3+e^6)$. It is not hard to verify that $d\eta \in \Lm^2_8(Q)$. Hence, Theorem \ref{thm: S1- solutions} yields a $1$-parameter family of $\G_2$-structures $\varphi_t$, given by \eqref{eq: S1-varphi}, solving \eqref{equ: heterotic system 1} -\eqref{equ: heterotic system 3} with $A=\eta$. Furthermore, from Corollary \ref{cor:tforms}, we see that one can have $\tau_0=0$ and $\tau_1\neq 0$, $\tau_0\neq 0$ and $\tau_1= 0$, or both $\tau_0$ and $\tau_1$ non-zero.
\end{ex}

\begin{ex}[Solutions with $dT_\omega \neq 0$: revisiting $\R \op \mathfrak{n}_{3,2}$] \label{ex: n32 again} Recall from Corollary \ref{cor:delneq0} that $M=S^1\times (\Gamma\backslash N_{3,2})$ admits a solution $(\varphi,A)$ to the heterotic $\G_2$-system with $G=\SU(2)$. Moreover from Remark \ref{rem: relation to characteristic connection}, $A$ can also be identified with the characteristic connection $\nabla^c$ of $\varphi$. We describe this solution explicitly as follows. Let
$\mn_{3,2}={\rm span}\{e_2, \ldots, e_7\}$ with structure equations: $de^i=0$, where $i=2,3,4$ and 
\begin{equation*}
    de^5= -2 e^{24},\qquad  de^6= -2 e^{23},\qquad  de^7= 2 e^{34}.
\end{equation*}
Denoting by $\eta=e^1$ the co-framing on the $S^1$ factor, the $\G_2$-structure $\varphi$ is given by \eqref{eq:phians}. The $\SU(3)$-structure induced on $N_{3,2}$ is given by
\begin{equation}\label{eq: su3 n32}
 \omega =    e^{27}+e^{35}-e^{46}, \quad \Upsilon_+=e^{347}+e^{567}-e^{236}-e^{245}, \quad \Upsilon_-=-(e^{234}+e^{256}+e^{457}+e^{367}).
\end{equation}
In our previous notation, $\textbf{A}^+=\textbf{B}=\mathrm{Id}$ and $\textbf{A}^-=\diag(+1,-1,+1)$.
A direct calculation shows that the only non-zero $\SU(3)$ torsion forms for \eqref{eq: su3 n32} are $\pi_0=1$ and $\nu_3$, and that 
\[
F_A=-2(e^{24}+e^{67})\otimes Y_1
-2(e^{23}-e^{57})\otimes Y_2
+2(e^{34}-e^{56})\otimes Y_3.
\]
We can apply the rotational ansatz \eqref{eq: S1-varphi} to get a $1$-parameter family of $\G_2$-structures $\varphi_t$ (these do \textit{not} occur in the ansatz \eqref{eq:phians} using $\textbf{B}$). By Proposition \ref{prop: S1- solutions} and Corollary \ref{cor:tforms},  $\vp_t$ is integrable and the torsion forms $\tau_0$ and $\tau_1$ are given by:
\begin{equation}\label{eq:n32tforms}
\tau_0=\frac{12}7\cos(t), \qquad \tau_1=-\frac12\sin(t)\eta.
\end{equation}
In particular, this shows that the $\G_2$-structures in the family are not equivalent for different values of the parameter $t$. The torsion  $3$-form $T_{\varphi_t}$ is, however, $t$-invariant and is explicitly given by
\[T_{\varphi_t}=T_\omega=-2e^{236}- 2e^{245}+2e^{347}-4e^{567}.\]
It is easy to see that $F_A \wedge \psi_t=0$ for all $t\in [0,2\pi)$. Thus, this gives a $1$-parameter family of solution to the heterotic $\G_2$-system \eqref{equ: heterotic system 1}-\eqref{equ: heterotic system 3} on $M=S^1\times (\Gamma\backslash N_{3,2})$ with same $A$ but varying $\tau_0$ and $\tau_1$ (in particular, these are not co-closed in general). 
\end{ex}

\begin{remark}
The one parameter family $\varphi_t$ in Example \ref{ex: n32 again} contains both the co-closed $\rG_2$-structure in \cite{CdBM}*{Example 5.3} for $t=0$, and the integrable but not co-closed structure in \cite{Ivanov2005}*{\S 6.2} for $t=\pi/2$ (see Eq.~\eqref{eq:n32tforms}).
\end{remark}

\begin{ex}[Solutions with $(d\eta)_0\neq0$]\label{ex: more characteristic connection}
Consider $S^1\hookrightarrow M^7\to \mathbb{T}^6$ with connection $1$-form $\eta$ satisfying 
\begin{equation*}
    d \eta = 
    a e^{12}+ 
    b e^{34}+ 
    c e^{56},
\end{equation*}  
where $a,b,c\in \R$ and $\{e^1,\ldots, e^6\}$ denote the standard flat $\mathrm{SU}(3)$ co-framing on $\mathbb{T}^6$. 
The manifold $M$ can be viewed as a nilmanifold whose nilpotent Lie algebra $\mathfrak{n}$ is isomorphic to either one of the following:
\begin{equation*}
    \R^7, \qquad 
    \R^4 \oplus \mathfrak{h}_3, \qquad 
    \R^2 \oplus \mathfrak{h}_5, \qquad 
    \mathfrak{h}_7,
\end{equation*}
depending on the parameters $a,b,c$. It is easy to see that $d\eta$ is of type $(1,1)$ and $(d\eta)_0=\frac{1}{3}(a+b+c)$.  Let $\varphi_t$ be the $S^1$-invariant family of co-closed $\G_2$-structures on $M$ given in \eqref{eq: S1-varphi}. If $(d\eta)_0=0$ i.e. $d\eta\in \Lambda^2_8$, then from Proposition \ref{prop: S1- solutions} we have
\[
dT_{\varphi_t } =d\eta \wedge d\eta.
\]
By Theorem \ref{thm: S1- solutions}, the latter yields a solution to the heterotic $\mathrm{G}_2$-system with $A=\eta$ i.e.~this is an abelian connection with gauge group $\mathrm{U}(1)$. In fact, one can show that $A$ corresponds to the characteristic connection of $(M^7,{ \varphi_t})$. More precisely, there is an embedding $A\in \Omega^1(\mathfrak{u}(1))\hookrightarrow\Omega^1(\mathfrak{g}_2)$ which corresponds to the characteristic connection $\nabla^c$ on $TM$, put differently $\mathfrak{hol}(\nabla^c)\cong \R$; this follows from the computations in  \cite{Fernandez2011}*{\S 5.1}, compare also with Remark \ref{rem: relation to characteristic connection} above. In this case, the condition $(d\eta)_0=0$ implies that $\mathfrak{n}\cong \R^2 \oplus \mathfrak{h}_5$ or $\mathfrak{h}_7$.

In the general case when $(d\eta)_0\neq 0$, there still exist solutions to \eqref{equ: heterotic system 1}-\eqref{equ: heterotic system 3}. To see this, we first define:
\[
\hat{\sigma}_1=e^{12}-e^{34},\qquad
\hat{\sigma}_2=e^{34}-e^{56},\qquad
\hat{\sigma}_3=e^{15}+e^{26}.
\]
It is easy to verify that $\hat{\sigma}_i\w \psi_t=0$. Furthermore, a long but straightforward computation shows: 
\[
dT_{\varphi_t} = \frac{1}{3}\Big(
(a^2+b^2-ab+ac+bc)(\hat{\sigma}_1 \w \hat{\sigma}_1)+
(b^2+c^2+ab+ac-bc)(\hat{\sigma}_2 \w \hat{\sigma}_2)+
(a^2+c^2+ab-ac+bc)(\hat{\sigma}_3 \w \hat{\sigma}_3)
\Big).
\]
Thus, let $\hat\xi_i$ denote connection $1$-forms with curvature $\d\hat\xi_i = \hat{\sigma}_i$ on $M$ then the abelian connection $A=(\hat\xi_1,\hat\xi_2,\hat\xi_3)$ with gauge group $\mathrm{U}(1)^3$ solve the heterotic Bianchi identity \eqref{equ: heterotic system 3} after a suitable choice of an $\mathrm{ad}$-invariant pairing on $\mathfrak{u}(1)^3\cong \R^3$ 
(for instance, by choosing a diagonal pairing with coefficients $(a-b)^2+d$, $(b-c)^2+d$ and $(a-c)^2+d$ with $d=ab+bc+ac$). This generalises the aforementioned examples in \cite{Fernandez2011}.
\end{ex}



\bibliography{biblio}

\begin{bibdiv}
\begin{biblist}

\bib{AtiyahASD}{article}{
      author={Atiyah, M.~F.},
      author={Hitchin, N.~J.},
      author={Singer, I.~M.},
       title={{Self-Duality in Four-Dimensional Riemannian Geometry}},
        date={1978},
     journal={Proc. Roy. Soc. Lond. A},
      volume={362},
       pages={425\ndash 461},
}

\bib{BFF18}{article}{
      author={Bagaglini, L.},
      author={Fern{\'a}ndez, M.},
      author={Fino, A.},
       title={Coclosed {${\mathrm {G}}_2$}-structures inducing nilsolitons},
        date={2018},
     journal={Forum Math.},
      volume={30},
      number={1},
       pages={109\ndash 128},
}

\bib{BallGoncalo}{article}{
      author={Ball, G.},
      author={Oliveira, G.},
       title={Gauge theory on {Aloff--Wallach} spaces},
        date={2019},
     journal={{Geom. Topol.}},
      volume={23},
      number={2},
       pages={685\ndash 743},
}

\bib{BeVe07}{article}{
      author={Bedulli, L.},
      author={Vezzoni, L.},
       title={The {Ricci} tensor of {SU}(3)-manifolds},
        date={2007},
        ISSN={0393-0440},
     journal={J. Geom. Phys.},
      volume={57},
      number={4},
       pages={1125\ndash 1146},
}

\bib{Bi89}{article}{
      author={Bismut, J.-M.},
       title={A local index theorem for non {K}{\"a}hler manifolds},
        date={1989},
     journal={Math. Ann.},
      volume={284},
      number={4},
       pages={681\ndash 699},
}

\bib{Bryant2003}{inproceedings}{
      author={Bryant, R.},
       title={{Some remarks on {$G_2$}-structures}},
        date={2006},
   booktitle={{Proceedings of G\"okova Geometry-Topology Conference 2005}},
   publisher={International Press},
}

\bib{Carrion1998}{article}{
      author={Carri{\'o}n, R.~R.},
       title={A generalization of the notion of instanton},
        date={1998},
     journal={Diff. Geom. Appl.},
      volume={8},
       pages={1\ndash 20},
}

\bib{ChSa02}{inproceedings}{
      author={Chiossi, S.},
      author={Salamon, S.},
       title={{The intrinsic torsion of $SU(3)$ and $G_2$ structures}},
        date={2002},
   booktitle={{Differential Geometry, Valencia 2001}},
   publisher={World Scientific},
       pages={115\ndash 133},
  url={https://www.worldscientific.com/doi/abs/10.1142/9789812777751_0010},
}

\bib{CdBM}{article}{
      author={Clarke, A.},
      author={del Barco, V.},
      author={Moreno, A.~J.},
       title={{${\rm G}_2$}-instantons on 2-step nilpotent {L}ie groups},
        date={2025},
        ISSN={1095-0761,1095-0753},
     journal={Adv. Theor. Math. Phys.},
      volume={29},
      number={3},
       pages={751\ndash 813},
         url={https://doi.org/10.4310/atmp.250724221756},
}

\bib{Clarke2022}{article}{
      author={Clarke, A.},
      author={Garcia-Fernandez, M.},
      author={Tipler, C.},
       title={{ T-Dual solutions and infinitesimal moduli of the
  G$_2$-Strominger system}},
        date={2022},
     journal={Adv. Theor. Math. Phys},
      volume={26},
      number={6},
       pages={1669\ndash 1704},
}

\bib{Lotay2024}{article}{
      author={da~Silva, A.},
      author={Garcia-Fernandez, M.},
      author={Lotay, J.},
      author={S{\'a}~Earp, H.},
       title={{Coupled G${_2}$-instantons}},
        date={2025},
     journal={Int. J. Math.},
       pages={2542002},
}

\bib{hera2026}{incollection}{
      author={de~la Hera, A.},
      author={Galdeano, M.},
      author={Garcia-Fernandez, M.},
       title={{G}$_2$-structures with torsion and the deformed
  {S}hatashvili--{V}afa vertex algebra},
        date={2026},
   booktitle={{2024 MATRIX Annals, Part II}},
   publisher={Springer},
       pages={341\ndash 355},
}

\bib{dlOG21}{article}{
      author={de~la Ossa, X.},
      author={Galdeano, M.},
       title={{Families of solutions of the heterotic G$_2$ system}},
        date={2021},
     journal={arXiv:2111.13221},
}

\bib{magdalena2020}{article}{
      author={de~la Ossa, X.},
      author={Larfors, M.},
      author={Magill, M.},
      author={Svanes, E.},
       title={Superpotential of three dimensional $n= 1$ heterotic
  supergravity},
        date={2020},
     journal={JHEP},
      volume={195},
      number={1},
}

\bib{OssaLaSv15}{article}{
      author={de~la Ossa, X.},
      author={Larfors, M.},
      author={Svanes, E.},
       title={Exploring {SU}$(3)$ structure moduli spaces with integrable
  {G$_2$} structures},
        date={2015},
        ISSN={1095-0761},
     journal={Adv. Theor. Math. Phys.},
      volume={19},
      number={4},
       pages={837\ndash 903},
         url={https://doi.org/10.4310/ATMP.2015.v19.n4.a5},
}

\bib{delaOssa2018}{article}{
      author={de~la Ossa, X.},
      author={Larfors, M.},
      author={Svanes, E.},
       title={{The infinitesimal moduli space of heterotic {${\mathrm{G}}_2$}
  systems}},
        date={2018},
        ISSN={1432-0916},
     journal={Comm. Math. Phys.},
      volume={360},
      number={2},
       pages={727\ndash 775},
         url={https://doi.org/10.1007/s00220-017-3013-8},
}

\bib{dBMR}{article}{
      author={{del Barco}, V.},
      author={{Moroianu}, A.},
      author={Raffero, A.},
       title={{Purely coclosed ${G_2}$-structures on 2-step nilpotent {L}ie
  groups}},
        date={2022},
     journal={Rev. Mat. Complut.},
      volume={35},
       pages={323\ndash 359},
}

\bib{Fernandez2011}{article}{
      author={Fern\'{a}ndez, M.},
      author={Ivanov, S.},
      author={Ugarte, L.},
      author={Villacampa, R.},
       title={Compact supersymmetric solutions of the heterotic equations of
  motion in dimensions 7 and 8},
        date={2011},
        ISSN={1095-0761},
     journal={Adv. Theor. Math. Phys.},
      volume={15},
      number={2},
       pages={245\ndash 284},
         url={http://projecteuclid.org/euclid.atmp/1337951924},
}

\bib{finofow}{article}{
      author={Fino, A.},
      author={Fowdar, U.},
       title={{Some remarks on strong {$\mathrm{G}_2$}-structures with
  torsion}},
        date={2026},
        ISSN={0021-7824},
     journal={J. Math. Pures Appl.},
      volume={211},
       pages={103882},
  url={https://www.sciencedirect.com/science/article/pii/S0021782426000280},
}

\bib{fino2026}{article}{
      author={Fino, A.},
      author={Grantcharov, G.},
      author={Medel, J.},
       title={{New Solutions to the G$_2$ Hull-Strominger system via torus
  fibrations over K$3$ orbifolds}},
        date={2026},
     journal={arXiv:2601.20813},
}

\bib{FinoG2T2023}{article}{
      author={{Fino}, A.},
      author={{Mart{\'\i}n-Merch{\'a}n}, L.},
      author={{Raffero}, A.},
       title={{The twisted G$_2$ equation for strong G$_2$-structures with
  torsion}},
        date={2024},
     journal={Pure Appl. Math. Q},
      volume={20},
      number={6},
       pages={2711\ndash 2767},
}

\bib{Friedrich2001}{article}{
      author={Friedrich, T.},
      author={Ivanov, S.},
       title={Parallel spinors and connections with skew-symmetric torsion in
  string theory.},
        date={2002},
        ISSN={1093-6106},
     journal={Asian J. Math.},
      volume={6},
      number={2},
       pages={303\ndash 335},
}

\bib{FriedrichNP}{article}{
      author={Friedrich, T.},
      author={Kath, I.},
      author={Moroianu, A.},
      author={Semmelmann, U.},
       title={On nearly parallel {$G_2$}-structures},
        date={1997},
        ISSN={0393-0440},
     journal={J. Geom. Phys.},
      volume={23},
      number={3},
       pages={259\ndash 286},
  url={https://www.sciencedirect.com/science/article/pii/S0393044097800046},
}

\bib{GS24}{article}{
      author={Galdeano, M.},
      author={Stecker, L.},
       title={{The heterotic ${\mathrm{G}_2}$ system with reducible
  characteristic holonomy}},
        date={2025},
        ISSN={0393-0440},
     journal={J. Geom. Phys.},
      volume={217},
       pages={105635},
  url={https://www.sciencedirect.com/science/article/pii/S0393044025002190},
}

\bib{Galicki1996}{article}{
      author={Galicki, K.},
      author={Salamon, S.},
       title={{Betti numbers of 3-Sasakian manifolds}},
        date={1996},
        ISSN={1572-9168},
     journal={Geom. Dedicata},
      volume={63},
      number={1},
       pages={45\ndash 68},
         url={https://doi.org/10.1007/BF00181185},
}

\bib{garcia2025parabolic}{article}{
      author={Garcia-Fernandez, M.},
      author={Moreno, A.},
      author={Payne, A.},
      author={Streets, J.},
       title={A parabolic flow for the large volume heterotic {G}$_2$ system},
        date={2025},
     journal={arXiv preprint :2512.14317},
}

\bib{Gon98}{book}{
      author={Gong, M-P.},
       title={{Classification of nilpotent Lie algebras of dimension 7, over
  algebraically closed fields and $\mathbb R$}},
   publisher={PhD Thesis, University Warteloo (Canada)},
        date={1998},
}

\bib{Harland2011}{article}{
      author={Harland, D.},
      author={N{\"o}lle, C.},
       title={Instantons and killing spinors},
        date={2012},
     journal={JHEP},
      volume={2012},
      number={3},
       pages={82},
}

\bib{Hitchin2000}{article}{
      author={Hitchin, N.},
       title={{The geometry of three-forms in six and seven dimensions}},
        date={2000},
     journal={J. Differ. Geom.},
      volume={55},
      number={3},
       pages={547\ndash 576},
}

\bib{hull1986}{article}{
      author={Hull, C.~M.},
       title={Compactifications of the heterotic superstring},
        date={1986},
        ISSN={0370-2693},
     journal={Physics Letters B},
      volume={178},
      number={4},
       pages={357\ndash 364},
  url={https://www.sciencedirect.com/science/article/pii/0370269386913936},
}

\bib{Ivanov2005}{article}{
      author={Ivanov, P.},
      author={Ivanov, S.},
       title={{$\mathrm{SU}(3)$}-instantons and {G}$_2$, {Spin}$(7)$-heterotic
  string solitons},
        date={2005},
     journal={Commun. Math. Phys.},
      volume={259},
      number={1},
       pages={79\ndash 102},
}

\bib{Iva10}{article}{
      author={Ivanov, S.},
       title={Heterotic supersymmetry, anomaly cancellation and equations of
  motion},
        date={2010},
        ISSN={0370-2693},
     journal={Physics Letters B},
      volume={685},
      number={2},
       pages={190\ndash 196},
}

\bib{Lotay2022}{article}{
      author={Lotay, J.},
      author={Sá~Earp, H.},
       title={{The heterotic $\rG_2$ system on contact {C}alabi-{Y}au
  $7$-manifolds}},
        date={2023},
     journal={Trans. A.M.S.},
      volume={10},
       pages={907\ndash 943},
}

\bib{MAL}{article}{
      author={Malcev, A.I.},
       title={On a class of homogeneous spaces},
        date={1962},
     journal={Amer. Math. Soc. Trans. Ser.},
      volume={9},
      number={1},
       pages={276\ndash 307},
}

\bib{Mil76}{article}{
      author={{Milnor}, J.},
       title={{Curvatures of left invariant metrics on Lie groups.}},
        date={1976},
     journal={{Adv. Math.}},
      volume={21},
       pages={293\ndash 329},
}

\bib{MRV}{article}{
      author={Moroianu, A.},
      author={Raffero, A.},
      author={Vezzoni, L.},
       title={The heterotic {$\G_2$}-system on $2$-step nilmanifolds endowed
  with principal torus bundles},
        date={2025},
     journal={arXiv2512.16817},
         url={https://arxiv.org/abs/2512.16817},
}

\bib{Salamon1989}{book}{
      author={Salamon, S.},
       title={Riemannian geometry and holonomy groups},
      series={Pitman Research Notes in Mathematics Series},
   publisher={Longman Scientific \& Technical},
     address={Harlow},
        date={1989},
      volume={201},
        ISBN={0-582-01767-X},
}

\bib{SA1}{article}{
      author={Salamon, S.},
       title={Complex structures on nilpotent {L}ie algebras},
        date={2001},
     journal={J. Pure Appl. Algebra},
      volume={157},
      number={2-3},
       pages={311\ndash 333},
}

\bib{Str86}{article}{
      author={Strominger, A.},
       title={Superstrings with torsion},
        date={1986},
     journal={Nuclear Physics B},
      volume={274},
      number={2},
       pages={253\ndash 284},
}

\end{biblist}
\end{bibdiv}
\bibliographystyle{numeric}

\vskip0.7em
\noindent
V. del Barco, Instituto de Matem\'atica, Estat\'istica e Computa\c{c}\~ao Cient\'ifica, Universidade Estadual de Campinas, Rua Sergio Buarque de Holanda, 651, 13083-859, Campinas, SP, Brazil.
Email: \texttt{delbarc@unicamp.br} 
\vskip0.5em
\noindent
U. Fowdar, 
Institute of Mathematics, University of Warsaw, 
Banacha 2, 02-097 Warszawa, Poland.
Email: \texttt{u.fowdar@uw.edu.pl} 
\vskip0.5em
\noindent
A. J. Moreno, Institut Camille Jordan, Université Claude Bernard Lyon 1, 21 Avenue Claude Bernard, 69100 Villeurbanne, France.
Email: \texttt{amoreno@math.univ-lyon1.fr} 
\end{document}